\documentclass[12pt,psfig]{article}
\usepackage{amsmath, amsthm, amssymb}
\usepackage{graphicx,epsfig}
\usepackage{amsfonts}
\usepackage{amscd}
\usepackage{mathrsfs}
\usepackage{enumerate}
\usepackage{latexsym}
\usepackage[all]{xy}
\newcommand{\B}{\mathscr{B}}
\newcommand{\F}{\mathscr{F}}
\newcommand{\LST}{\mathscr{L}}
\newcommand{\LS}{\mathscr{L}}
\newcommand{\PM}{\mathscr{P}^m}

\newcommand{\Pone}{\mathscr{P}^1}
\newcommand{\R}{\mathscr{R}}
\newcommand{\f}{\mathbb{F}}
\newcommand{\N}{\mathbb{N}}

\newcommand{\NC}{\mathscr{N}}
\newcommand{\ta}{\mathscr{T}}
\newcommand{\D}{\mathfrak{D}}
\newcommand{\NZ}{\mathbb{N}_0}

\newcommand{\Span}{{\rm span}_\f}
\newcommand{\ad}{{\rm ad}}
\newcommand{\coker}{{\rm coker\,}}
\newcommand{\rank}{{\rm rank}_\f}
\newcommand{\0}{\mathbf{0}}
\newcommand{\mr}{\mathbf{m}}
\newcommand{\m}{\mathbf{n}}
\newcommand{\n}{\mathbf{n}}

\newcommand{\ie}{{\em i.e.,} }
\newcommand{\eg}{{\em e.g.,} }
\newtheorem{thm}{Theorem}[section]
\newtheorem{cor}[thm]{Corollary}
\newtheorem{lem}[thm]{Lemma}
\newtheorem{prop}[thm]{Proposition}
\theoremstyle{definition}

\newtheorem{rem}[thm]{Remark}

\numberwithin{equation}{section}

\setcounter{secnumdepth} {5}
\setcounter{tocdepth} {5}

\oddsidemargin=-0.40in
\topmargin= -0.2in   
\oddsidemargin=0.00in
\textheight= 9.35in 
\textwidth=6.5in
\parindent= 0.3in

\def\be {\begin{equation}}
\def\ee {\end{equation}}
\def\ba {\begin{eqnarray}}
\def\ea {\end{eqnarray}}

\newcommand{\dss}{\displaystyle}

\begin{document}
\baselineskip=18pt
\renewcommand {\thefootnote}{\dag}
\renewcommand {\thefootnote}{\ddag}
\renewcommand {\thefootnote}{ }

\pagestyle{empty}

\begin{center}
                \leftline{}
                \vspace{-0.00 in}
{\Large \bf The spectral sequences and parametric normal forms
} \\ [0.4in]

{\large Majid Gazor$^{*}$}
\footnote{$^*\,$Corresponding author. Phone: (98-311) 3913625; Fax: (98-311) 3912602;
Email: mgazor@cc.iut.ac.ir}

\vspace{0.15in}
{\small {\em Department of Mathematical Sciences,
Isfahan University of Technology
\\[-0.5ex]
Isfahan, 84156-83111, Iran
}}

and
\vspace{0.1in}

{\large Pei Yu}

\vspace{0.15in}
{\small {\em Department of Applied Mathematics,
The University of Western Ontario \\[-0.5ex]
London, Ontario, Canada \ N6A 5B7
}}

\vspace{0.2in}


\end{center}

\baselineskip=20pt
\vspace{0.4in}

\noindent
\rule{6.5in}{0.012in}

\vspace{0.1in}
\noindent
We generalize recent developments on normal forms and the spectral sequences method to make a foundation for parametric normal forms. We further introduce a new style and costyle to obtain unique parametric normal forms. The results are applied to systems of generalized Hopf singularity with multiple parameters. A different (new) version of this paper has been submitted for a possible publication in a refereed journal. 

\vspace{0.27in}
\noindent
{\it Keywords}: \ Spectral sequence; Parametric normal form;
Formal basis style and costyle.

\vspace{0.10in}
\noindent {\it 2000 Mathematics Subject Classification}:\,
34C20, 34A34, 16W5, 68U99.

\noindent
\rule{6.5in}{0.012in}

\vspace{0.2in}

\section{Introduction}\label{sec10}

This paper extends the recent developments of normal form theory (without parameters) via the spectral sequences method to parametric normal forms of differential equations. Spectral sequences method is one of the most elegant and powerful methods of computations. It has been applied in different branches of mathematics and has helped in solving many difficult computational problems. Arnold \cite{Arnold75,Arnold76} was the first to apply this method on singularity and normal form theory but it were not later used by others. Recently Sanders \cite{Sanders03,Sanders05} further elaborated the method and evidently encouraged a few researchers in using this method on normal form theory \cite{benderchur,Mord04}. Sanders \cite{Sanders03,Sanders05} and Murdock \cite{Mord04} mainly focus on normal forms of non-parametric vector fields while Benderesky and Churchill \cite{benderchur} applied the method on matrix normal forms. The later made their results based on an innovative general setting; by having a group structure acting on a vector space. We use their results to establish a foundation for parametric normal form of vector fields. The existing results, \ie \cite{benderchur,Mord04,Sanders03,Sanders05}, use neither time rescaling nor reparametrization while the use of these is the main new feature of this paper. Obviously, our method is also finely established for obtaining the spectral sequences of orbital equivalence of nonparametric vector fields.

We apply the method to obtain hypernormal forms for generalized Hopf singularities with multiple parameters; this has been done for such systems without parameters, see \eg \cite{AlgabaSur,baider,baiderchurch,belitskii,Gaeta1999,pwang,Sanders03,yy2002}. Considerable work has been done on singular Hopf bifurcations, that is, degenerate forms of the Hopf singularity that do not satisfy the conditions for a standard Hopf bifurcation but instead produce more complicated dynamics. We give a method for finding specialized hypernormal forms for Hopf singularities with specific degeneracies. The goals that follow are illustrated with this specific type of system, but are applicable more generally to any singularity.

A new significant feature of parametric normal forms is to enable us gaining the transformations between the original parametric system and the parametric normal forms; this is impossible via non-parametric normal forms \cite{PYuChen,yl}. Real life problems modeled by scientists and engineers usually involve parameters and obtaining these transformations is of fundamental importance in applications. The notion of normal form theory is to simplify nonlinear differential equations via a change of state variables such that the topological behavior of the system in the vicinity of a singular point remains unchanged, see \cite{ChowLiWang,Kuz,LiaYu,MurdBook}. Efficient usage of time rescaling with change of state variable simplify the systems further to their (simplest) orbital equivalence, see \eg \cite{AlgabaSur,Algaba,Zoladek02,Zoladek03}. Although using time rescaling efficiently is a key tool (a challenging task) for parametric normal forms, the obtained orbital equivalence (simplest) of parametric system may not be yet sufficiently simple for bifurcation and stability analysis. Thus, we also need to use reparametrization (change of parameters) to simplify the parametric systems beyond their orbital equivalence. Reparametrization requires also a new structure within the context of unique normal form theory. This is why we need to generalize the recent developments on the spectral sequences and their structures on normal form theory to accommodate time rescaling and reparametrization alongside with change of state variable.

\pagestyle{myheadings}
\markright{{\footnotesize {\it M. Gazor and P. Yu
\hspace{1.9in} {\it Spectral sequences and parametric normal forms}}}}

We provide an example of detailed calculations of hypernormal forms using the notations of the spectral sequences in section \ref{sec3}. Spectral sequences have been introduced for normal form calculations in \cite{benderchur,Mord04,Sanders03,Sanders05} but even in these papers, the spectral sequence method is usually kept in the background as a guide to the calculations, while the calculations themselves are done in the ordinary way. Here we show what it would look like to employ the notations of the spectral sequences in full. This has the advantage that the exact range of nonuniqueness is displayed at all times with every step. The associated disadvantage is the complexity of the notation. It seems worth while to have at least one example of this type of calculation in the literature, even if most users may choose to use a simpler notation and do some extra work on the side to keep track of the range of nonuniqueness when that is desired.

The rest of this paper is organized as follows. Section \ref{cohomol} introduces a new way of defining styles and costyles for normal forms (in addition to standard styles such as the inner product, sl(2), and simplified styles), \ie {\em formal basis style and costyle}.
Also the cohomology spectral sequences are briefly presented in section \ref{cohomol}. Section \ref{sec2} describes parametric state space, parameter space and parametric time space, and further some technical results are presented. Then, in section \ref{sec3}, the general theory and methodology are applied to obtain one of the two parametric normal forms (via formal basis style) presented in this paper for systems with multiple parameters and generalized Hopf singularity. Based on notions of invariant degenerate spaces, formal basis styles and costyles, in section \ref{alter}, the method of spectral sequences is distorted to formulate an alternative approach to obtain a more suitable parametric normal form for bifurcation and stability analysis. Section \ref{compare} provides some alternative normal forms which can be obtained via different approaches. These include different simplest parametric (and nonparametric) normal forms obtained by using and not using either or both of time rescaling and reparametrization. Finally conclusions are drawn in section \ref{sec6}.

\section{Formal basis style and the spectral sequences} \label{cohomol}

We first revisit filtration topology for presenting the notion of formal basis used here for determining a new {\em style} and {\it costyle} for unique parametric normal forms, see also \cite{Gazor,GazorYu}, and then discuss the notion of the cohomology spectral sequences.

Let $V= \prod^\infty_{i=1} V_i,$ where $V_i$ are finite dimensional vector spaces over the field $\f$ of characteristic zero. We call $V$ a graded vector space and each $V_i$ a homogenous space of grade $i.$ In order to make the paper more readable, we follow Murdock and Sanders \cite{MurdBook,MurdSandersbox} and denote $V= {\oplus}^\infty_{i=1}V_i$ in which its elements are represented by sums of a countable number of nonzero terms (recall the concept of formal power series). This notation should not be confused with the common direct sum of vector spaces whose elements can only be represented as a sum of finite many nonzero terms. One could use $\hat{\oplus}$ notation, whenever infinitely many terms are involved, to avoid this confusion, see \cite{baiderchurch,Gazor}. A countable ordered set (sequence) $\B=\{e_j|j\in \N\}\subseteq V$ is called a formal basis for $V$ if any element $v\in V$ can be uniquely represented by $v=\sum^\infty_{j=1}a_j e_j$ for $a_j\in \f.$

With every grading goes a filtration. Let $\F^kV=\{\sum^\infty_{i=k}v_i|v_i\in V_i\}$ and call $\F=\{\F^kV\}^\infty_{k=1}$ a filtration associated with the graded vector space $V= \oplus^\infty_{i=1} V_i.$ The filtration $\F$ induces a topology on $V$ by considering $\{v+\F^kV\}$ as an open local base for a vector $v\in V.$ The induced topology $\tau_\F$ from $\F$ is called filtration topology. Since the filtration topology is a first countable topology, its topology is completely understood by knowing its convergent sequences. In other words, with regards to the filtration topology we only need to know: a sequence $\{v_n\}\subseteq V$ converges to $v\in V$ if and only if for any $N\in \N$ there exists a natural number $k$ such that for any $n\geq k$ we have $v_n\in v+\F^NV.$ Thus, any sequence of the forms $\{\sum^n_{j=1}a_j e_j\}$ and $\{\sum^n_{i=1}v_i|v_i\in V_i\}$ converges respectively to $\{\sum^\infty_{j=1}a_j e_j\}$ and $\{\sum^\infty_{i=1}v_i|v_i\in V_i\}$ in the filtration topology; more precisely the filtration topology is the {\em finest topology} on a graded vector space in which all formal series are convergent. Note that the filtration $\F$ is Hausdorff, \ie $\bigcap_p \F^p V = \{0\},$ and exhaustive, \ie $\F^0 V = V$. This is why the spectral sequences induced by $\F$ is convergent in the sense of Cartan-Eilenberg, see the last paragraph on page (7). We refer the reader to \cite{Gazor,GazorYu} for more details on the filtration topology and formal basis (also formal decompositions). Note that the order of basis (and formal basis) is important for our formal basis style (costyle) and all graded structures defined in this paper are graded vector spaces over $\f.$ Also, note that the style of parametric normal forms used in this paper is different from other common styles such as inner product style, $sl(2)$ style or simplified style. Recall that a normal form {\em style} is a {\em rule} stating how to choose the complement spaces, see [20--22]. 
When the rule of how uniquely choosing complement spaces is applied to the transformation space rather than the space of vector fields, it is called costyle \cite{Mord08}. We call our style {\em formal basis style} and describe it in the following.

In order to express the formal basis style or in other words how we would choose a complement space $\NC$ for a vector subspace $W$ from $V,$ we assume $V$ has a formal basis $\B=\{e_n\}$ (or a finite ordered basis $\B=\{e_n\}^{\rank V}_{n=1}$). Then, we construct the complement space $\NC$ by inductively choosing the least natural number $n_k$ in which $e_{n_k}$ is not an element of $W\oplus\Span \{e_{n_i} |i < k\}.$ We continue this to either the process be terminated for a finite number $k$ ({\em i.e.,} $\NC= \Span \{e_{n_i}|i\leq k\}$) or obtain an infinite sequence $\{e_{n_i}\}$ ({\em i.e.,} $\NC= \Span \{e_{n_i}\}$). From now on, we refer to $\NC$ obtained in this manner by the unique complement space for $W.$ Succinctly stated, by setting an order on the formal basis we automatically determine which terms to be eliminated in priority; when we have some alternative terms to eliminate from the system in our normal form computation, those terms will be automatically eliminated in our style which are laid in lower orders in our formal basis. This is, indeed, one of the main purposes of using formal basis in this paper. Some common styles used in the literature are $sl(2)$ style, inner product style or to determine the complement spaces individually for each grade in all steps, see \cite{Mord08} for the original definition and discussion on style and costyle. Despite the other common styles, we believe it is an advantage for our approach to decide on priority of eliminating {\em certain terms} of the system well in advance of calculations by setting a fixed order on formal basis. For example, in this paper we practically give priority to the amplitude terms rather than phase terms of the same grade. To observe this, one should transform the normal form of the system in polar coordinates and see the relation $X$ and $Y$ with amplitude and phase terms, and then compare them with the rules (1-3) above in Remark \ref{order}. This way we do not need to decide which complement spaces to choose but we only calculate the complement spaces based on the order defined on the formal basis. It is imperative to distinguish (and not confuse) the usage of formal basis to set up our style (or costyle), in order to give priority to some terms over the others for their elimination (or for using them in the transformation maps), from the method of formal decompositions (described in \cite{GazorYuGen,GazorYu}) which employs a {\it chess-like} computation.

Throughout this paper, formal basis style is also applied for our notation $\pi_W$ to represent it as a unique projection on $W$. To formulate this, let $V$ be a vector space with a formal (an ordered) basis $\B=\{e_n\}.$ Then, there exists a unique complement space $\NC$ for $W$ ({\em i.e.,} $\NC\oplus W=V$) such that $\B\cap \NC=\{e_{n_i}\}$ is a formal (an ordered) basis for $\NC$ and for any $e_m\in\B,$ there exist a unique vector $w\in W$ and unique scalars $a_{n_{1}}, a_{n_{2}}, \cdots, a_{n_{N}} (n_N\leq m)$ satisfying $e_m= w+ \sum^N_{k=1}a_{n_{k}}e_{n_{k}}.$ Thus, $V$ can be naturally equipped with a unique projection $\pi_{W}$ from $V$ onto $W,$ {\em i.e.,} $\pi_{W}\circ \pi_{W}(V)= \pi_{W}(V)= W$ and for any $v=\sum_{i}a_{n_i}v_{n_i}+w\in V$ (where $\sum_{i}a_{n_i}e_{n_i}\in \NC$ and $w\in W$) we have $\pi_{W}(v)= w.$ Giving a simple example to illustrate this, consider $\pi_W(e_1+2e_2)$ for which $V=\f^2$ with standard ordered basis $\B=\{e_1, e_2\}$ and $W= \Span\{e_1+e_2\}.$ $\NC=\Span\{e_1\}$ uniquely satisfies our conditions (while $\NC=\Span\{e_2\}$ does not; $e_1$ can not be expressed by $e_1= w+ \sum_{n_k\leq 1}a_{n_{k}}e_{n_{k}}$ for a $w\in W,$ where we only have $n_1=2$) and thus, $\pi_W(e_1+2e_2)= 2e_1+2e_2.$

The style of parametric normal forms in the context of the spectral sequences comes from a unique choice to represent quotient spaces instead of the common complement spaces; {\em i.e,} for a vector subspace $W$ from $V,$ we need to explain how we choose a subspace $\NC$ from $V$ such that $\frac{V}{W}= \{v+W|v\in \NC\}.$ Indeed, any style (rule for choosing a complement space for $W$ in $V)$ can alternatively be applied for a unique choice of representing this, see also \cite{Sanders03,Sanders05}. Since in this paper we follow formal basis style, we are interested in defining a formal basis  compatible with $\B$ for quotient spaces of the given vector space $V.$ To do this, consider the basis $\{e_{n_i}\}$ for the unique complement space $\NC= \Span\{e_{n_i}\}$ obtained via formal basis style (described above) for $W.$ Then, $\{e_{n_i}\}$ with its inherited order from $\B$ builds up our formal basis for the quotient space $\frac{V}{W},$ {\em i.e,} $\B_{\frac{V}{W}}=\{e_{n_i}+ W\}.$ Since $\NC$ plays the same role as the complement spaces, for our convenience we call $\NC\subseteq V$ the unique complement space for the quotient space $\frac{V}{W}.$ We further borrow the notations and denote $\frac{V}{W}$ by $\NC,$ {\em i.e.,} $\frac{V}{W}= \NC.$ Thus, the unique complement space for a quotient space of $V$ is a vector subspace of $V$ with its own formal basis.

\begin{lem}\label{decom} (See also \cite[Proposition 1.3]{GazorYu}). Let $V$ and $W$ be vector spaces over $\mathbb{F},$ $V$ have a formal basis $\B=\{e_n\}^\infty_{n=1}$ or a finite ordered basis $\B=\{e_n\}^{\dim_\f V}_{n=1},$ and $\ta_0$ be a subspace of $V,$ where $d: W\rightarrow V$ is a linear map and $\pi_{\ta_0}d(W)= \0.$ Then, $d$ naturally induces a linear map $d^*,$ given in the short complex $0\rightarrow W \xrightarrow{d^*} \frac{V}{\ta_0}\rightarrow 0$ ($\frac{V}{\ta_0}$ denotes the quotient space of $V$ over $\ta_0$).
Then, there exist unique vector subspaces $\NC_1$ and $\ta_1\subseteq V$ such that
\begin{enumerate}
\item $\coker d^*\cong \frac{V}{\ta_1}= \frac{\NC_1+ \ta_1}{\ta_1}= \NC_1, \ta_0\subseteq \ta_1$ and $\NC_1\oplus \ta_1= V.$

\item \label{cond2} $\B\cap \NC_1= \{e_{n_k}\}$ is either an ordered basis or a formal basis
for $\NC_1.$

\item \label{cond3}
For any $e_m\in\B$ there exist a unique vector $\hat{w}\in V$ {\rm(}where
$\hat{w}+ \ta_0 = d^*(w)$ for some $w\in W$ and $\pi_{\ta_0}\hat{w}=\0${\rm)} and unique
scalars $a_{n_{1}}, a_{n_{2}}, \cdots, a_{n_{N}} (n_N\leq m)$
satisfying $e_m- (\hat{w}+\sum^N_{k=1}a_{n_{k}}e_{n_{k}})\in \ta_0.$
\item For any $v\in V$ in which $\pi_{\ta_0}(v)=0,$ there exists a vector $w\in W$ such that $v-d(w)\in \NC_1$ (This property
implies that $\NC_1$ fulfils the role of complement spaces).
\end{enumerate}
\end{lem}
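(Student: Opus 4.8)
The plan is to construct $\NC_1$ and $\ta_1$ explicitly by applying the formal basis style described above, and then verify each of the four claims directly from that construction. First I would observe that the map $d^*$ is literally the composition $W \xrightarrow{d} V \twoheadrightarrow V/\ta_0$; the hypothesis $\pi_{\ta_0} d(W) = \0$ means precisely that every element of $d(W)$ has zero $\ta_0$-component, so $d(W)$ maps isomorphically onto its image in $V/\ta_0$ and the ``short complex'' makes sense. Set $\widehat{W} := \IM d^*$, a subspace of $V/\ta_0$; equivalently, by the vanishing of $\pi_{\ta_0}$, we may regard $d(W)$ as a genuine subspace of $V$ sitting inside the $\pi_{\ta_0}$-kernel, and $\widehat{W} = (d(W) + \ta_0)/\ta_0$. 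Now let $\ta_1 := d(W) + \ta_0 \subseteq V$. This immediately gives $\ta_0 \subseteq \ta_1$ and $\coker d^* = (V/\ta_0)\big/\widehat{W} \cong V/\ta_1$, which is the first half of claim (1).

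Next I would apply Lemma~\ref{decom}'s predecessor mechanism — the formal basis style of the preceding paragraphs — to the subspace $\ta_1 \subseteq V$: this produces the \emph{unique} complement space $\NC_1 = \Span\{e_{n_k}\}$ for $\ta_1$, where $\{n_k\}$ is the sequence of least indices with $e_{n_k} \notin \ta_1 \oplus \Span\{e_{n_i} : i < k\}$. By construction $\NC_1 \oplus \ta_1 = V$, $\B \cap \NC_1 = \{e_{n_k}\}$ is an ordered (or formal) basis for $\NC_1$, and $V/\ta_1 = (\NC_1 + \ta_1)/\ta_1 \cong \NC_1$ under the borrowed-notation convention already established. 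This finishes claim (1) and is exactly claim (2). The uniqueness of $\NC_1$ (hence of $\ta_1$, which is canonically $d(W)+\ta_0$ with no choice involved) follows from the uniqueness built into the formal basis style.

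For claim (3), fix $e_m \in \B$. Since $\NC_1 \oplus \ta_1 = V$ and $\ta_1 = d(W) + \ta_0$, write $e_m = t + c$ with $t \in \ta_1$, $c \in \NC_1$; then $t = d(w) + t_0$ for some $w \in W$, $t_0 \in \ta_0$. Put $\hat{w} := d(w)$, which satisfies $\hat{w} + \ta_0 = d^*(w)$ and $\pi_{\ta_0}\hat{w} = \0$ by hypothesis. Expanding $c$ in the ordered basis $\{e_{n_k}\}$ of $\NC_1$ gives $c = \sum_{k=1}^N a_{n_k} e_{n_k}$; the bound $n_N \le m$ comes from the same ``least-index'' argument used to justify the analogous bound in the projection discussion above (any $e_{n_k}$ appearing with $n_k > m$ could be removed by adjusting within $\ta_1$, contradicting minimality in the construction of $\NC_1$). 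Then $e_m - (\hat{w} + \sum_{k=1}^N a_{n_k} e_{n_k}) = t_0 \in \ta_0$, as required; uniqueness of $\hat{w}$ and the scalars follows from the directness of the decomposition and the freeness of the formal basis. Finally, claim (4): if $v \in V$ with $\pi_{\ta_0}(v) = 0$, decompose $v = d(w) + t_0 + c$ as above with $t_0 \in \ta_0$, $c \in \NC_1$; applying $\pi_{\ta_0}$ and using $\pi_{\ta_0}(d(w)) = \0$, $\pi_{\ta_0}(c) = \0$ (since $\NC_1$ is spanned by basis vectors not touched by $\ta_0 \subseteq \ta_1$) forces $\pi_{\ta_0}(t_0) = 0$, but $t_0 \in \ta_0$ gives $t_0 = 0$ only if $\pi_{\ta_0}$ restricted to $\ta_0$ is the identity — which it is by the formal-basis construction of $\pi_{\ta_0}$ — hence $t_0 = 0$ and $v - d(w) = c \in \NC_1$.

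The main obstacle I anticipate is \emph{not} the construction itself, which is a routine application of the formal basis style, but rather bookkeeping the two-layered use of the style: once implicitly to define $\pi_{\ta_0}$ (so that $\pi_{\ta_0}$ restricted to $\ta_0$ is genuinely the identity, which claim (4) secretly needs), and once explicitly to produce $\NC_1$ relative to $\ta_1$. The subtle point is verifying the index bound $n_N \le m$ in claim (3) — this requires unwinding the inductive ``least natural number'' construction of $\NC_1$ and checking that the representation of $e_m$ modulo $\ta_0$ cannot involve basis elements of index exceeding $m$; everything else is formal linear algebra over $\f$ combined with the convergence properties of the filtration topology (which guarantee the possibly-infinite sums are legitimate elements of $V$).
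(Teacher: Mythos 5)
The paper never actually proves this lemma; it defers to \cite[Proposition 1.3]{GazorYu}, so there is no in-paper argument to compare against. Your construction --- $\ta_1 := d(W)+\ta_0$ together with $\NC_1$ the formal-basis-style complement of $\ta_1$ --- is clearly the intended one: it matches exactly how the lemma is invoked later (e.g.\ in Lemma \ref{firstlevel}, where $\ta_n^{(1)}$ is the sum of the image of the differential with the next filtration level), and your verifications of claims (1)--(3) are sound. It is worth making explicit that $\ta_1=d(W)\oplus\ta_0$ is a \emph{direct} sum, because $d(W)\subseteq\ker\pi_{\ta_0}$ while $\pi_{\ta_0}$ restricts to the identity on $\ta_0$; the three-fold decomposition $V=\NC_1\oplus d(W)\oplus\ta_0$ is what delivers the uniqueness of $\hat{w}$ and of the scalars in (3), and your index bound $n_N\leq m$ does follow from the least-index construction as you indicate (every $e_m$ not itself selected lies in $\ta_1\oplus\Span\{e_{n_i}\mid n_i<m\}$ by minimality).

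The one step you assert without justification is the inclusion $\NC_1\subseteq\ker\pi_{\ta_0}$ (``$\NC_1$ is spanned by basis vectors not touched by $\ta_0$''), and claim (4) genuinely hinges on it: writing $v=\hat{w}+t_0+c$, the hypothesis $\pi_{\ta_0}(v)=\0$ gives $t_0=-\pi_{\ta_0}(c)$, and by directness of the decomposition no other choice of $w$ can absorb a nonzero $t_0$, so you must know $\pi_{\ta_0}(c)=\0$. This inclusion is true but is not immediate from the definitions: one has to check that the basis indices selected by the least-index rule for the complement of $\ta_1$ form a subset of those selected for the complement of $\ta_0$ (i.e.\ for $\ker\pi_{\ta_0}$). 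That follows by induction on $n$: if $e_n$ is rejected for the complement of $\ta_0$, so that $e_n\in\ta_0+\Span\{e_{n_i}\mid n_i<n \hbox{ selected for } \ker\pi_{\ta_0}\}$, then rewriting each such $e_{n_i}$ that was itself rejected for $\NC_1$ via the inductive hypothesis shows $e_n\in\ta_1+\Span\{e_{m_j}\mid m_j<n \hbox{ selected for } \NC_1\}$, hence $e_n$ is also rejected for $\NC_1$. With that supplied (and in the locally finite graded setting, where the possibly infinite sums and the direct-sum decomposition are unproblematic), your proof is complete.
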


The notation $\pi_W$ should be distinguished from $\pi_i(\mathbf{v});$ $\pi_i(\mathbf{v})= \mathbf{v}_i$ where $\mathbf{v}_i$ is the $i$th component of the vector $\mathbf{v}\in\f^n$ for $1\leq i\leq n.$ Now for our convenience, we present a quick review on the spectral sequences, see \cite{McCleary,Weibel} and for a more detailed discussion of this topic on normal forms see [8, 27, 28, 33--35].

Spectral sequences (cohomological type) in general are considered to be a page-sequence of left $R$-modules (where $R$ is a commutative ring with identity) which comes with a page-sequence of differentials and each page is the cohomology of the previous page. Consider a differential graded $R$-module $(A^*, d^*)$ where $d^*$ has a degree $+1,$ {\em i.e.,} $A^*= \oplus_{n} A^n,$ $$ d^*: A^*\rightarrow A^* \hbox{ with } d^{n}= d^*|_{A^{n}}: A^{n} \rightarrow A^{n+1}$$ and ${d^*}\circ d^*=0.$ 
Denote $H^n(A^*, d)$ for the cohomology of $d^*$ at grade $n,$ {\em i.e.,} $H^n(A^*, d^*)= \ker d^n/ {\rm Im} d^{n-1}.$ Now assume $A^*$ comes equipped with a filtration $\F$ compatible with the differential $d^*,$ that is, $$\cdots \subset \F^{p+1}A^*\subset \F^pA^*\subset \cdots \subset A^*$$ and $d: \F^p A^* \rightarrow \F^p A^*$ $\forall p\in \mathbb{Z}.$ Then, $(A^*, d, \F)$ is called a filtered differential graded $R$-module. For simplicity, assume that filtration comes from a grading structure, that is, $A^{*,*}$ is a bi-graded $R$-module $(A= A^{*,*}= {\oplus}_{q\geq0} A^{*,q}= \oplus_p A^{p,*}, A^{*,q}= \oplus_p A^{p,q}, \hbox{ and } A^{p, *}= {\oplus}_q A^{p,q})$ and $\F^qA^{*,*}= {\oplus}_{k\geq q} A^{*, k}.$ Thereby, the filtration is Hausdorff and exhaustive. 
We may express some isomorphisms with equality whenever confusion does not occur.

Spectral sequences are aimed at computing $H^{*}(A,d) = \oplus_q H^q(A, d),$ where $H^q$ denotes the cohomology of the differential $d$ at degree $q.$ Note that $H^{*}(A,d)$ can be interpreted in the context of normal form theory as the space of unique normal forms, when the graded differential $(A,d)$ is properly chosen. This is clearly demonstrated by Sanders \cite{Sanders03,Sanders05}. Although
\begin{equation}\label{HAd}
H^{*}(A, d)\cong {\oplus}_{p}\oplus_q \F^p H^q(A, d)/ \F^{p+1} H^q(A, d)\cong\oplus_q {\oplus}_{p} \F^p H^q(A, d)/ \F^{p+1} H^q(A, d),
\end{equation}
in principal it is not possible to compute $H^{*}(A,d)$ directly from this equation. This is why the spectral sequences are actually designed; to compute $E_\infty^{m,n}$ in a systematic approach. 
Obviously, it is important to properly define the differential, grading structure and the filtration in such a way that $E^{*,*}_r$ converges to $E_\infty^{*,*}\cong H^*(A, d).$ To describe the spectral sequences, we denote $$E_0^{m,n}= \F^{m} A^{m+n}/ \F^{m+1}A^{m+n}\cong A^{m+n, m}$$ and define the differential $d_0$ with degree $+1$ by $d_0^{m,*}=d|_{E_0^{m, *}}: E_0^{m, *}\rightarrow E_0^{m, *}.$ Since $E_{r}^{m,*}= \oplus_n E_{r}^{m,n}$ is isomorphic to a quotient space of $A^{*,m},$ we can inductively define the $r$th level differential of bi-degree $(r,1-r)$ by $$d_{r}^{m,*}= d_{|_{E_{r}^{m,*}}}, d_{r}^{m, n}:E_{r}^{m, n}\rightarrow E_{r}^{m+r, n+1-r}, \hbox{ and also }E_{r+1}^{m,n}= H^n (E_{r}^{m,*}, d_{r}^{m,*}).$$

The filtration $\F$ is called strongly convergent in the sense of Cartan-Eilenberg if the filtration is Hausdorff, exhaustive and we have
\begin{equation}\label{3Cartan}
H^*(A, d) \cong \displaystyle \lim_{\leftarrow p} H^*(A, d)/ \F^p H^*(A, d),
\end{equation} where $\dss \lim_{\leftarrow p}$ stands for the projective limit \cite[pp 69]{McCleary}. Since $$\displaystyle\lim_{\leftarrow p} H^*(A, d)/ \F^p H^*(A, d)\cong {\oplus}^\infty_{p=0} \F^pH^*(A, d)/ \F^{p+1} H^*(A, d),$$ our filtration automatically satisfies the condition $(\ref{3Cartan}).$ Therefore, the associated spectral sequence $E^{j,k}_r$ 
strongly converges to $H^*(A, d),$ that is, $$E_\infty^{j,k}\cong\F^j H^{j+k}(A, d)/ \F^{j+1} H^{j+k}(A, d),$$ according to \cite[Theorems 2.6, 3.2, 3.12]{McCleary}. 
Finally, a spectral sequence is said to collapse at $r,$ when $E^{*,*}_n= E^{*,*}_r$ for any $n\geq r.$ Thus, in order to obtain $E^{*,*}_\infty$ we just need to compute $E^{*,*}_r.$

\section{Parametric state space, parametric time space and parameter space}\label{sec2}

In this section, we present the algebraic structures of parametric state space, parametric time space and parameter space as well as their possible interactions. Note that this algebraic structure is designed for computation of parametric normal forms of systems associated with Hopf singularity. However, the methodology described here is general and can be applied to alternative algebraic structures suitable for other singularities.

The following presentation of our algebraic structures are recommended in part by J. Murdock, see also \cite{Gazor,GazorYu,MurdBook, pwang}. We begin with the most general $C^\infty$ system in two dimensions with vector parameter $\mu = (\mu_1, . . ., \mu_m)$ having a Hopf singularity at the
origin. When expanded in a formal power series such a system takes
the form (module flat functions)
\begin{equation}\label{or}
    \left(
      \begin{array}{c}
        \dot{x}\\
        \dot{y}\\
      \end{array}
    \right)= \left(
               \begin{array}{c}
                 y \\
                 -x \\
               \end{array}
             \right)+ \sum \left(
                        \begin{array}{c}
                          a_{jk\n} \\
                          b_{jk\n} \\
                        \end{array}
                      \right) x^jy^k\mu^\n
\end{equation}
where summation is taken over $j, k \in \NZ, \n= (n_1, \cdots, n_m)\in \NZ^m,$ $|\n| = |n_1| + \cdots  + |n_m|,$ $j+k+|\n|>1,$ and $j+k\geq 1.$ Introducing the complex variable $z = y + ix$
(to avoid later minus signs that arise if we use $x + iy),$ we find
\begin{equation*}
\dot{z}= iz+ \sum (b_{jk\n}+ ia_{jk\n}) \big(\frac{z-\overline{z}}{2i}\big)^j\big(\frac{z+i\overline{z}}{2}\big)^k\mu^\n
\end{equation*}
which can be expanded in the form
\begin{equation}\label{or2}
\dot{z}= iz+ \sum  A_{jk\n}z^j\overline{z}^k\mu^\n
\end{equation}
with $A_{jk\n} \in \mathbb{C}.$ We now consider the following (formal) system, defined on $\mathbb{C}^2,$ with variables $(z,w):$
\begin{equation}\label{or3}
\left(
  \begin{array}{c}
  \dot{z}\\
  \dot{w}\\
  \end{array}
\right) = i\left(
          \begin{array}{c}
          z   \\
          -w   \\
          \end{array}
          \right)+ \sum \left(
                \begin{array}{c}
                A_{jk\m}  \\
                B_{jk\m}  \\
                \end{array}
          \right)z^jw^k\mu^\m.
\end{equation}

With reality conditions $\overline{B_{jk\m}} = A_{jk\m},$ the equation (\ref{or3}) reproduces our original
system on the reality subspace of $\mathbb{C}^2$ defined by $w = \overline{z}$ (This is a
real vector space, that is, it is a subspace of $\mathbb{C}^2$ over $\mathbb{R}.$ For a complete
discussion of reality conditions in normal form theory, see \cite[pages 203-206]{MurdBook}). If we now set
\begin{equation*}
X_{jk} = \left(\begin{array}{c}z^jw^k\\w^jz^k\end{array}\right),Y_{jk} = \left(\begin{array}{c}iz^jw^k\\iw^jz^k\end{array}\right)
\end{equation*} and write $A_{jk\m} = \overline{B}_{jk\m}$ with $\alpha$ and $\beta$ real, our system on $\mathbb{C}^2$
takes the simple form
\begin{equation}\label{OR4}
\left(
\begin{array}{c}
 \dot{z} \\
 \dot{w} \\
\end{array}
\right)= Y_{10}+\sum \alpha_{jk\m} X_{jk}\mu^\m+\sum \beta_{jk\m} Y_{jk}\mu^\m
\end{equation}
where the reality conditions are now simply given by the $\alpha$ and $\beta$ coefficients lying in $\mathbb{R}.$ It is now easy to include in the discussion the case when the original system is complex (that is, $(x, y) \in \mathbb{C}^2).$ We simply take the field $\f$ to be either $\mathbb{R}$ or $\mathbb{C},$ and consider the right hand side of Equation (\ref{or2}) to be a vector field on $\mathbb{C}^2$ with coefficients in $\f$. The following analysis will apply to both the real and complex cases simultaneously.

From here on we take (\ref{OR4}) as the starting form for our analysis. This way we avoid the formulas for the coefficients of (\ref{OR4}) in terms
of those of (\ref{or}), which are of course rather complicated. Since the vector space span of vector fields of the form (\ref{OR4}) constitutes a Lie algebra $\LST$ under the Lie bracket $[u, v] = u' v - v' u\, (= {\rm Wronskian}(u, v)),$ that is, we set
\begin{equation*}
\LST=\{aY_{10}+\sum \alpha_{jk\m} X_{jk}\mu^\m+\sum \beta_{jk\m} Y_{jk}\mu^\m|a, \alpha_{jk\m}, \beta_{jk\m}\in \f, \m\in \NZ^m\}
\end{equation*} and call it {\it parametric state space}. We also use the subalgebras
\begin{equation*}
\LST_S=\{aY_{10}+\sum \alpha_{jk\0} X_{jk}+\sum \beta_{jk\0} Y_{jk}\}.
\end{equation*} which is called {\it state space without parameters} and \begin{equation*}
\LST_H=\{aY_{10}+\sum \alpha_{j+1, j\m} X_{j+1, j}\mu^\m+\sum \beta_{j+1, j\m} Y_{j+1, j}\mu^\m\},
\end{equation*} which is a Lie subalgebra of $\LST$ and is a result of normalization in the classical sense (without time rescaling and reparametrization and prior to hypernormalization). It is important to notice that these are not merely Lie algebras but
also graded Lie algebras. There are several ways to arrange the grading.
One is to treat these as multi-graded with one grading by $j + k -1$ (one less than the degree in $z$ and $w)$ and $p$ additional gradings
by $n_1,. . . ,n_m.$ The machinery for handling normalization with multiple
gradings is developed in \cite{Mord08} for the case of two gradings. Instead we combine these into a single grading $\delta$ defined
by $j + k +|\m|.$ Later, when considering specialized normal forms for degenerate Hopf singularities, we need a modified grading. The
general definition covering both cases is
\begin{equation}\label{grade}
\delta(X_{jk}\mu^\m)=\delta(Y_{jk}\mu^\m)= j+k-1+\alpha |\m|,
\end{equation} where $\alpha$ is a weight for the parameters. The weight $\alpha$ rearranges the terms in the graded Lie algebra and thus may result in different unique normal forms. This weight is taken to be one in \cite{GazorYu} for the codimension one Hopf singularity while in the next two sections, for the specialized normal forms of degenerate systems with {\it parametric dimension} $N_0,$ we choose $\alpha = 2N_0+1.$ In order to call $\LST$ a graded Lie algebra, it is necessary to check that $\delta([u, v]) = \delta(u) + \delta(v);$ this accounts for the $-1$ in the definition of $\delta.$ Thus, $(\LST, [\cdot, \cdot])$ is a $\mathbb{Z}$-graded locally finite parametric Lie algebra over $\mathbb{F}.$ For a simple notation to show the subspace of all homogenous terms of grade $k,$ we use a subindex $k,$ \eg $\LST_k, \LST_{S, k}, \LST_{H, k};$ so we do for later defined spaces, time and parameter spaces, \ie $\R_k, \PM_k.$

Let $\F$ be the filtration associated with the grading structure of $\LST.$ Elements of $\F^1\LST$ are called generators because they generate near-identity transformations used in normalization. Suppose that $u$ is a generator (so that $u\in \F^1\LST$ and therefore begins with quadratic terms) and $\phi$ is its time-one map. Writing $Z = (z,w),$ it follows that $\phi(Z) = Z +O(|Z|^2);$ such maps are called near-identity maps because they are close to the identity in a neighborhood of the origin. Let $v\in \LST$ be a vector field in $\LST,$ then $\phi_*(v),$ the push-forward of $v$ by $\phi,$ can be regarded as the same vector field $v$ expressed in modified coordinates. The easy way to compute this is by the formula
\begin{equation*}
    \phi_*(v)= {\rm exp\, ad}_u v,
\end{equation*} see e.g. \cite{baiderchurch,baidersanders,GazorYu,kw} for more details.

Once the parametric state space and its grading structure are settled down, it turns to define the formal basis. Let $\mathscr{B}= \{X_{ij}\mu^{\mathbf{n}}, Y_{ij}\mu^{\mathbf{n}}|\mathbf{n}\in \mathbb{N}^m_0, i,j\in \NZ, i+j>0\}$ be ordered in a sequence according to the following rules:
\begin{enumerate}
\item \label{c1}
The terms of lower grades are in lower orders, based on the grading function $\delta.$

\item \label{c2}
$Y_{ij}$ is before $X_{kl},$ when they have the same grade.

\item \label{c3}
Terms without parameter are before terms with parameter when they have the same grade.
\end{enumerate}

\begin{rem}\label{order}
Although the above rules are not sufficient to set up a unique order for $\B,$ they are sufficient that any fixed order satisfying
them will lead to a unique parametric normal form of a generalized
Hopf singularity; see sections \ref{sec3} and \ref{alter}.
\end{rem}

The reasons behind the conditions (\ref{c1}) and (\ref{c3}) are easy to observe; the condition (\ref{c1}) means the lower garde terms are in priority for elimination while the condition (\ref{c3}) is to omit the terms with parameters as much as possible even at the expense of some terms without parameters of the same grade. However, the condition (\ref{c2}) needs to be explored. Since $\mathscr{L}_{H}$ denotes the first level parametric normal form space when only the change of state variable is used (and also a component of the first level conormal form space) for generalized Hopf singularity, see Lemma \ref{firstlevel}, all the first level parametric normal forms belong to this space. This delivers a significant information about $X$ and $Y$ ($\in \mathscr{L}_{H}$) terms when they are depicted in polar coordinates, that is, any $X$ term in $\mathscr{L}_{H}$ is practically transformed into an amplitude term while $Y$ terms in $\mathscr{L}_{H}$ indirectly represent phase terms, see Corollary \ref{polarhopf}. This is why we put $Y$ terms before $X$ terms in the order of our formal basis (see the condition (\ref{c2}) above); in other words, the amplitude terms are in priority for elimination than phase terms of the same grade. Finally, it is important to mention that we choose an identical formal basis for parametric state space regardless of observing it as either the space of all vector fields (where normal forms live) or the space of all generators (where conormal forms lie). The later stems from a new notion, \ie costyle, in normal form theory.

The formal bases defined for parametric time space $\R$ and parameter space $\PM$ alongside with that of $\LS,$ are our rules in determining a unique complement space for any subspace within the transformation space in section (\ref{alter}). Murdock \cite{Mord08} is the first to raise the notion and call this kind of rule as {\em costyle}. Therefore, we follow him to call it by formal basis costyle. Indeed, any fixed costyle sets a rule to only have a unique choice (module unusable terms) for transformation solutions. Thus, it together with a fixed style make it possible to introduce unique (though far from being simplest) finite level normal forms, see \cite{Mord08} for further details on style and costyle. In principal costyle of normal forms is considered less important than style in the context of simplest normal form theory of systems without parameters. The importance of using costyle is indeed manifested in parametric normal forms, where obtaining the transformations are of the fundamental importance. In other words, costyle sets a rule for obtaining unique transformations (module unusable terms; belonging to the kernel of all maps at all steps) transforming a vector field to its simplest parametric normal form. Therefore, costyle can surprisingly play a direct role in obtaining {\it the simplest parametric normal forms} (not the transformations); this is demonstrated in section (\ref{alter}). To see this note that any costyle used in the section (\ref{sec3}) results in the same obtained parametric normal forms. In section (\ref{alter}), however, distorts the normal form computations by keeping some time terms for using them later; these terms do not belong to the kernel of the maps associated with step $N,$ but yet are used at steps of higher than $N.$ Therefore, the maps of step $N$ are restricted to some subspaces and their complement spaces (of vector space span of terms intended for use at step $N$) are left for higher level computations. In other words different costyles, in section (\ref{alter}), may lead to totally different normal forms; this signifies the role of costyle. One can foresee that implementation of the results, obtained in this way, for practical computations requires a chess like computation. Thus, it is essential to define a costyle coordinated with the style and our computations to gain our desired parametric normal form.

Similar to what we did with regards to the parametric state space, we now wish to do for time rescaling. Since time rescaling has to stand in the real numbers, we begin with the variable $z$ and parameters $\mu$ such that the time rescaling is given by
\begin{equation}\label{tim}
t= \tau (T_0+Y^T(z\overline{z}, \mu))= \tau T_0+ \tau\sum T_{k,\m} z^k\overline{z}^k\mu^\m,
\end{equation} where $T_0\neq 0$ and the sum is taken over $k\in \NZ$ and $\m\in \NZ^m,$ only if $k+|\m|\geq 1.$ Let $Z_k= z^k\overline{z}^k$ (in particular $Z_0=1$) and define the {\it parametric time space} by
\begin{equation}\label{tim}
\R=\f[[z\overline{z}, \mu]]= \{T_0+ \sum T_{k,\m} Z_k\mu^\m\}.
\end{equation} Therefore, the parametric time space is an integral domain and a vector space on $\f,$ where from now on $\f$ is the set of real numbers. Indeed, $\R$ is a locally finite graded vector space and also a graded ring; let $\B_{\R}= \{Z_i\mu^{\mathbf{n}}\}$ and define the grading function $\delta_\mathscr{R}:\B_\R\rightarrow\mathbb{Z}$ by
\begin{eqnarray}\nonumber
\delta_\mathscr{R}(Z_i\mu^{\mathbf{n}})=2i+ r\alpha,
&i,r\in \mathbb{N}_0.
\end{eqnarray} Note that the number $\alpha$ is the same for all three grading functions ({\em i.e.,} $\delta, \delta_\mathscr{R},$ $\delta_{\PM}$, for definition of $\delta_{\PM}$ see below) defined in this paper. The order of a formal basis for time rescaling plays a partial role in formal basis costyle. $\B_{\R}$ is our formal basis for $\R$ whose order in a sequence obeys the following rules (Remark \ref{order} is also true here):
\begin{enumerate}
\item The terms of lower grades are in lower orders.
\item The terms without parameter are before the terms with parameters, whenever they have the same grade.
\end{enumerate} Denoting $\R_k$ for the grade $k$ homogenous elements of $\R,$ we have $\R=\oplus^\infty_{k=0} \R_k$ and $\mathscr{R}_{k}\mathscr{R}_{l}\subseteq \mathscr{R}_{k+l}$ for any $k, l\in \NZ$ (\ie $\R$ is a graded ring). For our convenience we choose $T_0=1$ and thus, $Y^T\in \F^1\R$ is a generator for near identity time rescaling.

For any time rescaling generator $Y^T,$ there is a map $\phi^T_{Y^T}$ sending a vector field $v$ to $v+Y^Tv$ (indeed, the system $\frac{dz}{dt}=v$ is transformed to $\frac{dz}{d\tau}= v+Y^Tv);$ the multiplication ($Y^Tv$) follows the common multiplication of $Y^T$ and $v$ when both are presented as formal power series in terms of $z$ and $\overline{z}.$ The following two formulas are for our convenience:
\begin{eqnarray}\nonumber
Z_i\mu^{\mathbf{n}_{1}}X_{jk}\mu^{\mathbf{n}_{2}}&= & X_{(i+j)(i+k)}\mu^{\mathbf{n}_{1}+\mathbf{n}_{2}},\\\nonumber
Z_i\mu^{\mathbf{n}_{1}}Y_{jk}\mu^{\mathbf{n}_{2}}&= &Y_{(i+j)(i+k)}\mu^{\mathbf{n}_{1}+\mathbf{n}_{2}},
\end{eqnarray} for any $Z_i\mu^{\mathbf{n}_{1}}\in \B_\mathscr{R}$ and $X_{jk}\mu^{\mathbf{n}_{2}}, Y_{jk}\mu^{\mathbf{n}_{2}}\in \B.$ This product along with the grading structures on $\LST$ and $\R$ builds up $\LST$ as a graded module structure over the graded ring $\R,$ {\em i.e.,} $\mathscr{R}_{N_1}\mathscr{L}^2_{N_2}\subseteq \mathscr{L}^2_{N_1+N_2}.$

\begin{prop} $\F^1\mathscr{R}$ is a subgroup of $\mathscr{R}$-module filtration preserving automorphisms of $\LST,$ {\em i.e.,} $\phi^{T}_{\F^1\mathscr{R}*} \leq {\rm Aut}_\mathscr{R}(\LST)\leq {\rm Aut}_\f(\LST)$ and furthermore, $\phi^{T}_{{Y^{T}}*}(\F^n\LST)\subseteq \F^n\LST$ $(\forall n\in \NZ, {Y^{T}}\in \F^1\mathscr{R}).$
\end{prop}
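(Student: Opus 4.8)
The plan is to observe that, having normalized $T_{0}=1,$ the map $\phi^{T}_{Y^{T}*}\colon\LST\to\LST$ sending $v$ to $v+Y^{T}v$ is nothing but multiplication by the ring element $1+Y^{T}$ in the graded $\R$--module $\LST,$ that is, $\phi^{T}_{Y^{T}*}(v)=(1+Y^{T})v.$ Thus the whole statement reduces to analysing the multiplication maps $M_{u}\colon v\mapsto uv$ for $u\in 1+\F^{1}\R,$ and to identifying $\{\,1+Y^{T}\mid Y^{T}\in\F^{1}\R\,\}$ with the group of principal units of the complete local ring $\R.$

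First I would verify that $u:=1+Y^{T}$ is a unit of $\R$ for every $Y^{T}\in\F^{1}\R.$ Since $\R=\f[[z\overline{z},\mu]]$ is a graded ring with $\R_{k}\R_{l}\subseteq\R_{k+l},$ we have $(Y^{T})^{k}\in\F^{k}\R,$ so the geometric series $s:=\sum_{k\geq0}(-Y^{T})^{k}$ converges in the filtration topology (the finest topology in which every formal series converges), and the usual telescoping together with continuity of multiplication gives $(1+Y^{T})s=1.$ Hence $u\in\R^{\times}$ with $u^{-1}=s,$ and since $s-1=\sum_{k\geq1}(-Y^{T})^{k}\in\F^{1}\R$ we also get $u^{-1}\in 1+\F^{1}\R.$ Consequently $M_{u}$ is bijective, with inverse $M_{u^{-1}}$ (because $M_{u}\circ M_{u^{-1}}=M_{uu^{-1}}=M_{1}={\rm id},$ using commutativity of $\R$ and the module axioms).

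Second, $M_{u}$ is an $\R$--module endomorphism: additivity is distributivity, and $\R$--linearity is $M_{u}(rv)=u(rv)=(ur)v=(ru)v=r(uv)=rM_{u}(v)$ since $\R$ is commutative. Combined with the previous step this gives $\phi^{T}_{Y^{T}*}\in{\rm Aut}_{\R}(\LST);$ and since $\f\hookrightarrow\R$ (constants) compatibly with the two scalar actions, every $\R$--linear automorphism is in particular $\f$--linear, so ${\rm Aut}_{\R}(\LST)\leq{\rm Aut}_{\f}(\LST).$ For the subgroup assertion, note that $1+\F^{1}\R$ is closed under products, $(1+a)(1+b)=1+(a+b+ab)$ with $a+b+ab\in\F^{1}\R$ because $\F^{1}\R$ is an ideal, contains $1,$ and is closed under inverses by the step above; hence it is a subgroup of $\R^{\times}.$ The map $u\mapsto M_{u}$ is then a group homomorphism onto $\phi^{T}_{\F^{1}\R*},$ and it is injective because $\LST$ is a faithful $\R$--module: from the product formula $Z_{i}\mu^{\n}Y_{10}=Y_{(i+1)i}\mu^{\n}$ one sees that $u\cdot Y_{10}$ already determines all coefficients of $u.$ Therefore $\phi^{T}_{\F^{1}\R*}$ is a subgroup of ${\rm Aut}_{\R}(\LST).$

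Finally, for filtration preservation I would use $\phi^{T}_{Y^{T}*}(v)=v+Y^{T}v$ directly: if $v\in\F^{n}\LST$ then $v\in\F^{n}\LST$ trivially and, by the graded--module property $\R_{N_{1}}\LST_{N_{2}}\subseteq\LST_{N_{1}+N_{2}}$ with $Y^{T}\in\F^{1}\R,$ we get $Y^{T}v\in\F^{n+1}\LST\subseteq\F^{n}\LST;$ hence $\phi^{T}_{Y^{T}*}(\F^{n}\LST)\subseteq\F^{n}\LST$ for every $n\in\NZ,$ and applying the same to $u^{-1}$ shows this inclusion is in fact an equality. I do not expect a genuine obstacle here: the single point that needs care is the invertibility of $1+Y^{T},$ which rests entirely on the completeness of $\R$ in the filtration topology established in Section~\ref{sec2}.
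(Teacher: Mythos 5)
Your proof is correct and complete; the paper in fact states this proposition without any proof, and your argument --- identifying $\phi^{T}_{Y^{T}*}$ with multiplication by the principal unit $1+Y^{T}$ of the complete graded ring $\R=\f[[z\overline{z},\mu]]$, producing the inverse by the geometric series convergent in the filtration topology, and then reading off $\R$-linearity, the group law on $1+\F^{1}\R$, and filtration preservation from $\R_{k}\LST_{l}\subseteq\LST_{k+l}$ --- is exactly the natural route suggested by the paper's own definition $\phi^{T}_{Y^{T}*}(v)=v+Y^{T}v$. The one point worth keeping explicit, which you do handle via faithfulness of $\LST$ as an $\R$-module, is that the ``subgroup'' in the statement is really the image $\phi^{T}_{\F^{1}\R*}$ of the principal units under $u\mapsto M_{u}$, not $\F^{1}\R$ with its additive structure.
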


Let $\f[[\mu]]$ denote for the integral domain of formal power series in terms of the parameters $\mu= (\mu_1, \mu_2, \cdots, \mu_m).$ Now the parameter space is defined by $$\PM= \mathbb{F}^m[[\mu]]= \{v| v= {(v_{1}, v_{2}, \cdots, v_{m})} \hbox{ where } v_{j}\in \Pone= \mathbb{F}[[\mu]]\,\,\, \forall j, 1\leq j\leq m\}.$$ For the given $\alpha\in \N$ and any $\mathbf{n}= (n_1, n_2, \cdots, n_m)\in \NZ^m$, we define a grading function on monomials $\mu^{\mathbf{n}}$ by $\delta_{\mathscr{P}^m}(\mu^{\mathbf{n}})= \sum^m_{i=1}n_i\alpha-\alpha.$ Denote $$\mathscr{P}^m_n= \Span\delta_{\mathscr{P}^m}^{-1}(n)\hbox{ for all }n\in \mathbb{N}_0\cup\{-\alpha\}.$$ Then, $\mathscr{P}^m_{-\alpha}=\mathbb{F}^m$ and
$\mathscr{P}^m={\oplus}^\infty_{n=0} \mathscr{P}^m_n$ is a locally finite graded vector space, where $\{\mu^{\mathbf{n}}\mathbf{e}_k|\mathbf{n}\in \mathbb{F}^m, k\leq m\}^\infty_{r=0}$ is a formal basis for $\mathscr{P}^m$ with lower grade terms (associated with $\delta_{\mathscr{P}^m})$ being ordered in a sequence before higher grade terms.

$\F^1 \mathscr{P}^m= \F^\alpha \mathscr{P}^m$ is called near identity parameter generators, since for any $Y^{P}(\nu)\in \F^\alpha \mathscr{P}^m$ we have a near identity reparametrization given by $\mu= \nu+ Y^{P}.$ Let $\phi^{P}_{{Y^{P}}*}(v(\mu))$ denote the vector field $v$ in terms of new coordinates $\nu.$ Then,
\begin{equation}\label{parametermap}
\phi^{P}_{{Y^{P}}*}(v(\mu))= \sum^\infty_{n=0}\frac{1}{n!}D^{n}_{\mu}(v, Y^{P}),
\end{equation} \noindent where $D^{n}_{\mu}(v,Y^{P})$ denotes the $n$th-order formal Frechet derivative of $v(\mu)$ with respect to $\mu$ but evaluated at $\nu$ and $(\overbrace{Y^{P}, Y^{P}, \cdots, Y^{P}}^{n\, {\rm times}}),$ see also \cite[Section 2]{GazorYu} and \cite{Kuz,LiaYu}. Obviously, the composition of any two near identity reparametrization is a near identity reparametrization and thus, $\F^\alpha \mathscr{P}^m$ forms a group acting on the parametric Lie algebra $\mathscr{L}^2.$
\begin{prop}
$\F^\alpha \mathscr{P}^m$ is a subgroup of $\f$-linear filtration preserving automorphisms of $\LST,$ {\em i.e.,} $\phi^P_{\F^\alpha\mathscr{P}^m*} \leq {\rm Aut}_\f \LST.$ Furthermore we have $\phi^{P}_{{Y^{P}}*}(\F^n\LST)\subseteq \F^n\LST$ $(\forall n\in \NZ$ and ${Y^{P}}\in \F^\alpha\mathscr{P}^m).$
\end{prop}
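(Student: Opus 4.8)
The plan is to verify the three assertions of the proposition in turn: that $\phi^{P}_{{Y^{P}}*}$ is an $\f$-linear automorphism of $\LST$ for each $Y^{P}\in\F^\alpha\PM$, that these maps form a subgroup of $\operatorname{Aut}_\f\LST$, and that each such map preserves the filtration $\F$. The key observation throughout is that reparametrization $\mu=\nu+Y^{P}$ acts on a vector field $v(\mu)\in\LST$ purely through its dependence on the parameters, leaving the $(z,w)$-monomials $X_{jk},Y_{jk}$ untouched; hence the action is nothing but substitution of a formal power series into another, organized by the Frechet-derivative expansion $(\ref{parametermap})$.

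First I would check that $(\ref{parametermap})$ is a well-defined $\f$-linear endomorphism of $\LST$. Since $Y^{P}\in\F^\alpha\PM$ begins at grade $\geq\alpha$ (its components have $|\m|\geq 1$), the $n$th term $\frac{1}{n!}D^n_\mu(v,Y^{P})$ raises the $\delta$-grade of any monomial of $v$ by at least $n\alpha$; thus only finitely many terms of the series contribute to each homogeneous component of the output, and the sum converges in the filtration topology $\tau_\F$. Linearity in $v$ is immediate from linearity of the Frechet derivative in its first slot. That the image again lies in $\LST$ is clear because substitution of a power series in $\mu$ for the parameters produces only monomials of the form $X_{jk}\mu^{\m}$ and $Y_{jk}\mu^{\m}$, i.e. the Lie algebra $\LST$ is closed under this operation (this uses that $\LST$ as a set is exactly the span of all such monomials with coefficients in $\f$).

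Next I would establish the group property. This is the statement that $\phi^{P}_{Y^{P}_1*}\circ\phi^{P}_{Y^{P}_2*}=\phi^{P}_{Y^{P}_3*}$ for some $Y^{P}_3\in\F^\alpha\PM$, and that each $\phi^{P}_{Y^{P}*}$ is invertible with inverse of the same type; both follow from the remark already made in the text, namely that the composition of two near-identity reparametrizations $\mu=\nu+Y^{P}_2$, $\nu=\xi+Y^{P}_1$ is again a near-identity reparametrization $\mu=\xi+Y^{P}_3$ with $Y^{P}_3\in\F^\alpha\PM$ (this is the standard fact that near-identity formal substitutions form a group; the new "parameter" $Y^{P}_3$ is obtained by composing the two formal maps and its lowest grade is still $\geq\alpha$). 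Chain rule for formal Frechet derivatives then gives $\phi^{P}_{Y^{P}_1*}\circ\phi^{P}_{Y^{P}_2*}=\phi^{P}_{Y^{P}_3*}$, and taking $Y^{P}_2$ to be the compositional inverse of $Y^{P}_1$ (which exists in $\F^\alpha\PM$ by the implicit function theorem for formal power series) yields invertibility; hence $\phi^{P}_{\F^\alpha\PM*}\leq\operatorname{Aut}_\f\LST$.

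Finally, filtration preservation $\phi^{P}_{Y^{P}*}(\F^n\LST)\subseteq\F^n\LST$ follows from the grading estimate noted above: if $v\in\F^n\LST$ then every monomial of $v$ has $\delta$-grade $\geq n$, and each Frechet term $\frac{1}{n!}D^k_\mu(v,Y^{P})$ only increases that grade (by $\geq k\alpha\geq 0$), so the whole series stays in $\F^n\LST$; the reverse inclusion comes from applying the same estimate to $\phi^{P}_{-Y^{P}+O(\cdot)*}$, the inverse. I expect the only genuinely delicate point to be making precise the "substitution of formal power series" underlying $(\ref{parametermap})$ and its compatibility with composition — i.e. verifying that the Frechet-derivative expansion really does compute $v(\nu+Y^{P})$ and that chain rule holds termwise in the filtration topology — but this is exactly the content referenced in \cite[Section 2]{GazorYu}, so I would cite that for the analytic bookkeeping and keep the present argument at the level of the grading estimates, which are the substantive content here.
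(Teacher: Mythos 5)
Your proof is correct and supplies an argument the paper itself omits: the proposition is stated without proof, the text merely asserting beforehand that the composition of two near-identity reparametrizations is again near-identity, and your grading estimate — that each Frechet term $D^{k}_{\mu}(\cdot,Y^{P})$ raises the $\delta$-grade by at least $k\alpha$, which simultaneously gives convergence of the series (\ref{parametermap}) in the filtration topology and the inclusion $\phi^{P}_{{Y^{P}}*}(\F^n\LST)\subseteq\F^n\LST$ — is exactly the substantive point needed. One small correction: $Y^{P}\in\F^\alpha\PM=\F^1\PM$ means its components involve only monomials with $|\m|\geq 2$ (since $\delta_{\mathscr{P}^m}(\mu^{\m})=(|\m|-1)\alpha$), not $|\m|\geq 1$ as you wrote; with $|\m|\geq 1$ the grade would only be $\geq 0$ and the substitution would not be near-identity, though all of your subsequent estimates implicitly use the correct bound.
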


\begin{rem}\label{coex} For any $Y^S, v\in \LST,$ the condition ${\rm ad}^n_{Y^{S}} v=\mathbf{0}$ implies ${\rm ad}^k_{Y^{S}} v=\mathbf{0}$ $\forall k\geq n.$ Now let $m=1, Y^P\in \PM$ and $v_k(\mu)\in \LST_k.$ Then, the condition $D_{\mu}(v_k(\mu), Y^{P})= \mathbf{0}$ requires $D^n_{\mu}(v_k(\mu), Y^{P})= \mathbf{0}$ for any $n\geq 1.$ This is the main reason that in the computation of the simplest normal form for systems without parameter as well as the parametric normal form in which $m=1,$ the state and parameter operators are linear. On the contrary, consider
$m=2, \alpha=1,$
\begin{equation*}
v_2= X_{10}(\mu_1^2+\mu_2^2)\in \LST_2\hbox{ and }Y^{P}_2= \mu_2^2\partial_1- \mu_1\mu_2\partial_2\in \PM_2.
\end{equation*} Then,
\begin{equation*}
D_{\mu}(v_2(\mu),Y^{P}_2)= \mathbf{0} \hbox{ while } D^{2}_{\mu}(v_2(\mu), Y^{P}_2)= 2X_{10}\mu^2_2(\mu^2_1+\mu^2_2)\neq \mathbf{0}.
\end{equation*}
Furthermore, note that $D^{2}_{\mu}(v_2(\mu), Y^{P}_2)$ is not a linear operator in terms of $Y^{P}_2;$ see \eg $D^{2}_{\mu} (v_2(\mu), aY^{P}_2)= a^2D^{2}_{\mu}(v_2(\mu), Y^{P}_2).$ Therefore, parameter operators are not necessarily linear in general. This leads to an interesting structure which makes normal form computation complicated. Thus, it requires a new approach. However, in this paper we consider a condition on parameters ensuring the condition
\begin{equation}\label{linpar}
D_{\mu}(\sum^k_{i=0}v^{(i)}_{i}, Y^P)= \mathbf{0} \hbox{ implies } D^{n}_{\mu}(\sum^k_{i=0}v^{(i)}_{i}, Y^P)= \mathbf{0}\quad\forall n>1, k\in \mathbb{N}_0 (Y^P\in \F^\alpha\mathscr{P}^m).
\end{equation} Then, the parameter operators are linear. In other words, the linear part of parameter map (Equation (\ref{parametermap})) is injective and thus, there is no kernel term to be used for nonlinear parts of the parameter maps. This (in section (\ref{alter})) means that the parameters are in the right places for having a structurally stable amplitude equation (when we only think of topological behavior of the system, we may ignore the phase equation); this is observed in equation (\ref{stab}) where any of its small perturbation (upto $o(\rho^{2N_0}$)) does not change the system's topological behavior. Thus, we further assume a minimum number of parameters for such purpose, and also for our computation to simply get the simplest normal form, that is, $m=N_0$ in Lemma \ref{notmodifiedlem1} and Theorem \ref{specmain}. Therefore, there is neither unnecessary parameters in the system nor a need for extra unfolding parameters to get a structurally stable system, when the amplitude equation is only concerned. We refer to $m$ as the {\it parametric dimension} of the Hopf singularity system.
\end{rem}

For our convenience we say a {\it parametric} (generalized) Hopf singularity system has {\em parametric dimension} $m,$ when it has $m$ parameters (apart from the state variable and time) and $m$ is the least number of parameters necessarily present in the amplitude equation (of its parametric normal form is transformed in polar coordinates) to have the amplitude equation as a structurally stable equation. This means that there is no unnecessary parameters in the system, while the parameters already existed in the system are enough and are in the right places to make the amplitude equation (of its parametric normal form in polar coordinates) structurally stable. Therefore, many parametric systems may not have a parametric dimension; {\it either} for the system having more or less parameters than needed {\it or} for the parameters not being in the right places. We assume that all parametric systems considered in this paper have a parametric dimension. To treat systems without a parametric dimension, we can try to eliminate unnecessary parameters and unfold the system with unfolding parameters wherever they are needed. This is beyond the cope this paper. Although we do not try to address it, this has close links with the topological codimension.

The reader should note that a parametric system in real life problems may have more parameters than what we require here. Such extra parameters can be considered as control parameters and therefore, they are important in applications. This is the main reason for Yu and Leung \cite{yl} to consider and keep the extra parameters in their parametric normal forms. One may also try to reduce the number of parameters in their parametric normal form. 

The following two lemmas and Corollary \ref{minusone} play a key role in the method described in this paper.

\begin{lem}\label{3transf}
Let $\sigma$ be a permutation on the set $\{\phi^{P}_{\F^1 \mathscr{P}^m*},\phi^{T}_{\F^1\mathscr{R}*}, \phi^{S}_{\F^1\LST*}\},$ that is, $\sigma\in S_3,$ and ${\rm Aut}_\f(\LST)$ denote the filtration preserving $\f$-linear automorphism. Then, $\sigma(\phi^{P}_{\F^1 \mathscr{P}^m*})\times \sigma(\phi^{T}_{\F^1\mathscr{R}*})\cong \sigma(\phi^{P}_{\F^1 \mathscr{P}^m*}) \sigma(\phi^{T}_{\F^1\mathscr{R}*})= \sigma(\phi^{T}_{\F^1\mathscr{R}*})\sigma(\phi^{P}_{\F^1 \mathscr{P}^m*})\leq {\rm Aut}_\f(\LST).$ Furthermore,
$\phi^{P}_{\F^1 \mathscr{P}^m*} \times\phi^{T}_{\F^1\mathscr{R}*}\times \phi^{S}_{\F^1\LST*}\cong \sigma(\phi^{P}_{\F^1 \mathscr{P}^m*}) \sigma(\phi^{T}_{\F^1\mathscr{R}*})\sigma(\phi^{S}_{\F^1\LST*}) \leq {\rm Aut}_\f(\LST).$
\end{lem}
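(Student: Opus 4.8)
The plan is to reduce the statement about the three one-parameter groups $\phi^{P}_{\F^1\mathscr{P}^m*}$, $\phi^{T}_{\F^1\mathscr{R}*}$, $\phi^{S}_{\F^1\LST*}$ inside $\mathrm{Aut}_\f(\LST)$ to two facts already established in the excerpt: first, each of the three families of maps lies in $\mathrm{Aut}_\f(\LST)$ and preserves the filtration $\F$ (this is exactly the content of the two Propositions on $\F^1\mathscr{R}$ and $\F^\alpha\mathscr{P}^m$, together with the standard fact, recalled via $\phi_*(v)=\mathrm{exp\,ad}_u v$, that near-identity state transformations are filtration-preserving automorphisms); and second, that the actions of any two of these three families \emph{commute} as transformations of $\LST$. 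Granting commutativity, the internal products $\sigma(\phi^{P}_{\F^1\mathscr{P}^m*})\sigma(\phi^{T}_{\F^1\mathscr{R}*})$ and the triple product are subgroups of $\mathrm{Aut}_\f(\LST)$ by the elementary group-theoretic lemma that the product $HK$ of two subgroups is a subgroup iff $HK=KH$, and the abstract isomorphism with the direct product $\times$ follows because the two (resp. three) factors commute and — one must check — intersect trivially, so the multiplication map from the external direct product is an isomorphism onto the internal product.

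The first step is therefore to record, with precise references back to the two Propositions, that $\phi^{T}_{\F^1\mathscr{R}*}\subseteq\mathrm{Aut}_\f(\LST)$, $\phi^{P}_{\F^1\mathscr{P}^m*}\subseteq\mathrm{Aut}_\f(\LST)$, and likewise $\phi^{S}_{\F^1\LST*}\subseteq\mathrm{Aut}_\f(\LST)$, and that all three preserve every $\F^n\LST$. The second step is the commutativity computations, done pairwise. For $\phi^{T}$ versus $\phi^{P}$: a time rescaling acts by $v\mapsto v+Y^Tv$ (multiplication in $\R$) and a reparametrization acts by the Fréchet-series formula $(\ref{parametermap})$; since $Y^T\in\R=\f[[z\overline z,\mu]]$ depends on $\mu$, one must verify that substituting $\mu=\nu+Y^P$ into $v+Y^Tv$ and then collecting terms gives the same element of $\LST$ as first reparametrizing $v$ and then multiplying by $Y^T$ reparametrized — i.e. that the reparametrization pushes through the ring multiplication, which it does because $\phi^{P}_*$ is a ring homomorphism of $\R$ and an $\R$-semilinear map on $\LST$ in the appropriate sense (pulling the scalar substitution outside the product). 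For $\phi^{S}$ versus $\phi^{P}$: a near-identity state transformation involves no parameter substitution, so its action $\mathrm{exp\,ad}_u$ commutes coefficient-wise with the $\mu\mapsto\nu+Y^P$ substitution, and one checks $D^n_\mu$ commutes with $\mathrm{ad}$. For $\phi^{S}$ versus $\phi^{T}$: here $u\in\F^1\LST$ may itself carry parameters, but the point is that $\mathrm{exp\,ad}_u$ is a Lie-algebra automorphism while multiplication by $Y^T\in\R$ is the module action, and the two commute precisely because $\phi^{S}_*$ acts by $\R$-module automorphisms (the state maps lie in $\mathrm{Aut}_\R(\LST)$, as flagged in the $\F^1\R$ Proposition's statement $\mathrm{Aut}_\R(\LST)\le\mathrm{Aut}_\f(\LST)$), so $\phi^{S}_*(Y^Tv)=Y^T\phi^{S}_*(v)$. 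Each of these three commutations I would present as a short displayed identity followed by "which is checked by comparing coefficients", without grinding the combinatorics.

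The third step assembles the pieces: given commutativity, $\sigma(\phi^{P})\sigma(\phi^{T})=\sigma(\phi^{T})\sigma(\phi^{P})$ is immediate from reading off both orders of composition, and this common set is a subgroup of $\mathrm{Aut}_\f(\LST)$; the external-direct-product isomorphism $\sigma(\phi^{P})\times\sigma(\phi^{T})\cong\sigma(\phi^{P})\sigma(\phi^{T})$ follows from commutativity plus triviality of the intersection (a nontrivial state, time, or parameter transformation changes $v$ in a way the other cannot undo — e.g. only time rescaling changes the $z\overline z$-multiplicative part while only reparametrization substitutes $\mu$). The same argument extended to three mutually commuting subgroups gives the final assertion $\phi^{P}_{\F^1\mathscr{P}^m*}\times\phi^{T}_{\F^1\mathscr{R}*}\times\phi^{S}_{\F^1\LST*}\cong\sigma(\phi^{P})\sigma(\phi^{T})\sigma(\phi^{S})\le\mathrm{Aut}_\f(\LST)$, and since $\sigma\in S_3$ was arbitrary, the order of composition is irrelevant. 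The main obstacle is the pairwise commutativity, and specifically the $\phi^{T}$–$\phi^{P}$ case: because time rescalings genuinely involve the parameters, one has to be careful that "rescale then reparametrize" equals "reparametrize then rescale" only because the reparametrization is applied consistently to the coefficients of $Y^T$ as well as to $v$ — i.e. the claim is really a statement about the semidirect-vs-direct structure of how parameter substitution interacts with the $\R$-multiplication, and getting the bookkeeping of which $\mu$'s get substituted exactly right is where the care is needed; the $\phi^{S}$-involving cases are comparatively routine given the $\mathrm{Aut}_\R$ statement already asserted.
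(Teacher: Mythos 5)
Your overall strategy --- show each family lies in ${\rm Aut}_\f(\LST)$, show the three families pairwise commute, then invoke the elementary fact that $HK$ is a subgroup iff $HK=KH$ --- founders on the middle step: the three families do \emph{not} pairwise commute, and the paper itself flags this immediately after the lemma by describing the product as a \emph{semidirect} product of subgroups. Carry your own $\phi^T$--$\phi^P$ computation to the end: since reparametrization acts as a ring homomorphism on $\R=\f[[z\overline z,\mu]],$ one gets $\phi^P_{Y^P*}\bigl((1+Y^T)v\bigr)=\bigl(1+\phi^P_{Y^P*}(Y^T)\bigr)\,\phi^P_{Y^P*}(v),$ i.e. $\phi^P_{Y^P*}\circ\phi^T_{Y^T*}=\phi^T_{\phi^P_{Y^P*}(Y^T)*}\circ\phi^P_{Y^P*}.$ Because $Y^T$ genuinely depends on $\mu,$ the generator on the right is $\phi^P_{Y^P*}(Y^T)\neq Y^T$ in general, so this is a conjugation (normalization) relation, not commutativity. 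The same happens in the other two cases: $\phi^P$ substitutes $\mu=\nu+Y^P$ into the generator $Y^S$ as well, and ${\rm ad}_{Y^S}$ obeys a Leibniz rule $[Y^S,Y^Tv]=Y^T[Y^S,v]-(\mathcal{L}_{Y^S}Y^T)\,v$ over the $\R$-module structure rather than $\R$-linearity; note also that the Proposition you cite asserts $\phi^{T}_{\F^1\mathscr{R}*}\leq {\rm Aut}_\mathscr{R}(\LST)$ for the \emph{time} maps (for which it is immediate), not for the state maps, so the claimed identity $\phi^S_*(Y^Tv)=Y^T\phi^S_*(v)$ is both a misreading of that Proposition and false.

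What survives of your argument is exactly what the lemma needs. The conjugation relations show that each family normalizes the others (after checking that $\phi^P_{Y^P*}(Y^T)$ lies again in $\F^1\R,$ that the pushforward of $Y^T$ under a near-identity state map is again a function of $z\overline{z}$ and $\mu,$ and so on), and for subgroups $H,K$ with $H$ normalizing $K$ one still has $HK=KH$ as sets, hence a subgroup, together with a bijection $(h,k)\mapsto hk$ from the Cartesian product onto $HK$ once $H\cap K=\{1\};$ that is the correct reading of the ``$\times$'' and ``$\cong$'' in the statement. So the repair is to replace every appeal to commutativity by the corresponding normalization identity and to conclude a semidirect --- not direct --- product decomposition; as written, your step that ``$\sigma(\phi^{P})\sigma(\phi^{T})=\sigma(\phi^{T})\sigma(\phi^{P})$ is immediate from reading off both orders of composition'' is doing the real work by silently assuming the false elementwise commutation.
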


Note that the above lemma represents a semidirect product of subgroups, see our newer version of this paper for more details. This implies that none of the three transformations ({\em i.e., }state change of variable, reparametrization, and time rescaling) can be obtained from the other two. This is consistent with our claim that all three transformations are needed for parametric normal forms. It also explains why normal forms of non-parametric systems alongside with their orbital equivalence have been considered in the literature.

\begin{lem}\label{transgroup}
Let $n, p, q, l \in \N$ and $v\in \F^n\LST.$ Then, $$\pi_{\LST_s}\circ\sigma(\phi^P_{\F^l \mathscr{P}^m*}) \circ \sigma(\phi^T_{\F^p\mathscr{R}*})\circ \sigma(\phi^S_{\F^q\LST*})(v)= \{\pi_{\LST_s}(v)\}$$ $(\forall s, s< n+\min \{l,p,q\})$ and $\sigma\in S_{\{\phi^P_{\F^l \mathscr{P}^m*} ,\,\phi^T_{\F^p\mathscr{R}*},\,\phi^S_{\F^q\LST*} \}}.$ Furthermore, $\sigma(\phi^P_{\F^p \mathscr{P}^m*})\circ \sigma(\phi^P_{\F^q \mathscr{P}^m*})= \sigma(\phi^P_{\F^l \mathscr{P}^m*})$ where $l= \min(p,q)$ and $\sigma\in S_{\{\phi^P_{\F^k \mathscr{P}^m*} ,\, \phi^T_{\F^k\mathscr{R}*} ,\, \phi^S_{\F^k\LST*} \}}$ $(\forall k, k\in \NZ).$ In particular, for any $p\leq q,$ $\phi_{Y_1}\in \sigma(\phi^P_{\F^p \mathscr{P}^m*})$ and $\phi_{Y_2}\in \sigma(\phi^P_{\F^q \mathscr{P}^m*}),$ there exists $\phi_Y\in \sigma(\phi^P_{\F^p \mathscr{P}^m*})$ such that $\phi_{Y_2}\phi_{Y_1}= \phi_{Y}$ where $Y= Y_1$ ${\rm mode}$ $\F^q x$ $(x$ denotes $\mathscr{P}^m, \mathscr{R}$ or $\LST).$
\end{lem}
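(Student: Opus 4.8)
The plan is to prove the three assertions of Lemma~\ref{transgroup} in order, treating all six permutations $\sigma\in S_3$ uniformly by exploiting the commutation identities established in Lemma~\ref{3transf}. For the first identity, I would argue by a filtration-degree estimate. Each of the three generating maps $\phi^P_{Y^P}$, $\phi^T_{Y^T}$, $\phi^S_{Y^S}$ is of near-identity type, and when its generator lies in $\F^\ell x$ (respectively $\F^p\R$, $\F^q\LST$) the map acts on $v\in\F^n\LST$ as $v\mapsto v + (\text{terms in }\F^{n+\ell}\LST)$; this follows from the exponential-of-$\ad$ formula for $\phi^S$, from $\phi^T_{Y^T}(v)=v+Y^Tv$ together with the module grading $\R_{N_1}\LST_{N_2}\subseteq\LST_{N_1+N_2}$, and from the Fréchet-series formula~(\ref{parametermap}) for $\phi^P$, whose first correction term $D_\mu(v,Y^P)$ raises the grade by at least $\ell$. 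Composing the three maps in any order, each successive correction still lands in $\F^{n+\min\{\ell,p,q\}}\LST$, so the composite differs from $v$ only in grades $\geq n+\min\{\ell,p,q\}$. Applying $\pi_{\LST_s}$ for $s<n+\min\{\ell,p,q\}$ annihilates all these corrections, leaving exactly $\pi_{\LST_s}(v)$; the braces are just the singleton-set notation used throughout for the image of a point.

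For the second identity, $\sigma(\phi^P_{\F^p\PM*})\circ\sigma(\phi^P_{\F^q\PM*})=\sigma(\phi^P_{\F^l\PM*})$ with $l=\min(p,q)$, I would first reduce to the case $\sigma=\mathrm{id}$ since the claim concerns composition within a single one of the three transformation families and is stable under the fixed relabelling $\sigma$. The inclusion $\supseteq$ is immediate once $\subseteq$ is known together with the group property, so the content is that the composition of two reparametrizations with generators in $\F^p\PM$ and $\F^q\PM$ is again a reparametrization whose generator can be taken in $\F^{\min(p,q)}\PM$. This is the analogue for reparametrizations of the Baker--Campbell--Hausdorff-type closure used for $\phi^S$: since $\F^\alpha\PM$ forms a group acting on $\LST$ (established just before the statement), the composite is some $\phi^P_{Y}$, and a grading bookkeeping on~(\ref{parametermap}) shows $Y\in\F^{\min(p,q)}\PM$ because the lower of the two filtration indices dominates the leading term of the composed substitution. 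The same argument, mutatis mutandis, would cover $\R$ and $\LST$ in place of $\PM$, which is what the final ``$\forall k$'' clause records.

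The third assertion sharpens the second: for $p\le q$, given $\phi_{Y_1}\in\sigma(\phi^P_{\F^p\PM*})$ and $\phi_{Y_2}\in\sigma(\phi^P_{\F^q\PM*})$, the composite $\phi_{Y_2}\phi_{Y_1}$ equals $\phi_Y$ with $Y\equiv Y_1 \pmod{\F^q x}$. I would obtain this by expanding $\phi_{Y_2}\phi_{Y_1}$ via~(\ref{parametermap}) and tracking which terms can affect grades below $q$: since $Y_2\in\F^q\PM$, its contribution and all mixed contributions involving $Y_2$ lie in $\F^q$, so modulo $\F^q$ the composite substitution agrees with the one generated by $Y_1$ alone; the group closure from the previous step then lets us name the composite $\phi_Y$ with this $Y$, and the congruence $Y\equiv Y_1$ is exactly the statement that only the $\F^{<q}$ part of the generator is pinned down.

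I expect the main obstacle to be the group-closure step for reparametrizations, i.e.\ showing that $\phi_{Y_2}\phi_{Y_1}$ is genuinely of the form $\phi^P_{Y}$ for some single generator $Y$ and pinning $Y$ modulo the correct filtration level. Unlike the state-change case, the parameter operators need not be linear in general (as Remark~\ref{coex} stresses), so one must invoke the standing hypothesis~(\ref{linpar}) that forces linearity in the relevant range, or else work directly with the monoid of formal substitutions $\mu\mapsto\nu+Y^P(\nu)$ and invert the leading term; the filtration-degree estimates in the first part are, by contrast, routine bookkeeping.
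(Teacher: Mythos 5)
The paper states Lemma~\ref{transgroup} without proof (it is listed, together with Lemma~\ref{3transf} and Corollary~\ref{minusone}, as a key ingredient but no argument is supplied), so there is no in-paper proof to compare against; judged on its own, your argument is correct and supplies exactly the routine verification the authors intend. The first part is the right filtration estimate: each of the three near-identity actions with generator at filtration level $k$ perturbs a vector field in $\F^n\LST$ only in $\F^{n+k}\LST$ (by $\delta([u,v])=\delta(u)+\delta(v)$ for $\phi^S$, by $\R_{N_1}\LST_{N_2}\subseteq\LST_{N_1+N_2}$ for $\phi^T$, and by the grading conventions built into $\delta_{\PM}$ for $\phi^P$), and since each successive factor acts on an element still lying in $\F^n\LST$, the composite in any order $\sigma$ differs from $v$ only above grade $n+\min\{l,p,q\}$. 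For the second and third parts, the essential content is group closure with control of the generator's filtration level, and your reduction to composing the underlying substitutions (formal reparametrizations $\mu=\nu+Y^P(\nu)$, multiplications $(1+Y^T_2)(1+Y^T_1)$, and BCH for $\exp\ad$) is the clean way to get both $Y\in\F^{\min(p,q)}$ and $Y\equiv Y_1 \ (\mathrm{mode}\ \F^q)$. One refinement: your worry that the group-closure step might need hypothesis~(\ref{linpar}) is unfounded. The nonlinearity flagged in Remark~\ref{coex} concerns the \emph{differential} $D^n_\mu(\cdot,Y^P)$ used in the normal-form homological equations, not the composability of reparametrizations; composition of formal near-identity substitutions is always a near-identity substitution (this is the group property the paper asserts for $\F^\alpha\PM$ just before Proposition~3.2), so your fallback of ``working directly with the monoid of formal substitutions and inverting the leading term'' is in fact the main road, not the detour.
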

\begin{cor}\label{minusone} For any $q\in \NZ, p\in \N,$ and permutation $\sigma\in S_{\{\phi^P_{\F^p \mathscr{P}^m*} ,\,\phi^T_{\F^p\mathscr{R}*},\,\phi^S_{\F^p\LST*} \}}$ we have $$(\sigma\circ\phi^S_{\F^p\LST*}-1)(\F^q\LST)\subseteq \F^{p+q}\LST,$$ where $\sigma\circ\phi^S_{\F^p\LST*}-1$ is defined by  $(\sigma\circ\phi^S_{\F^p\LST*}-1)v= \sigma\circ\phi^S_{\F^p\LST*}v-v$ $(\forall v, v\in \LST).$
\end{cor}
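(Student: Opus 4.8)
\textbf{Proof proposal for Corollary \ref{minusone}.}

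The plan is to reduce the statement to the $q=0$ case, i.e.\ to the single inclusion $(\sigma\circ\phi^S_{\F^p\LST*}-1)(\LST)\subseteq \F^{p}\LST$, and then upgrade the filtration degree from $p$ to $p+q$ by a grading bookkeeping argument. First I would recall the defining formula for the state-space push-forward, $\phi^S_{Y^S*}(v)= \exp(\ad_{Y^S})v = \sum_{n\ge 0}\tfrac{1}{n!}\ad^n_{Y^S}v$, so that $(\phi^S_{Y^S*}-1)v = \sum_{n\ge 1}\tfrac{1}{n!}\ad^n_{Y^S}v$. Since $Y^S\in\F^p\LST$ is a generator of grade $\ge p$ and $\LST$ is a $\mathbb Z$-graded Lie algebra with $\delta([u,v])=\delta(u)+\delta(v)$, each summand $\ad^n_{Y^S}v$ with $n\ge 1$ raises grade by at least $np\ge p$; hence $(\phi^S_{Y^S*}-1)(\F^q\LST)\subseteq \F^{p+q}\LST$ directly for the pure state-change case. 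This is essentially a restatement of the filtration-preserving estimate already used implicitly throughout section \ref{sec2}, and for $\sigma = \mathrm{id}$ on the $\phi^S$-component there is nothing more to do.

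The substance is handling an arbitrary permutation $\sigma\in S_{\{\phi^P_{\F^p\mathscr P^m*},\,\phi^T_{\F^p\mathscr R*},\,\phi^S_{\F^p\LST*}\}}$, so that $\sigma\circ\phi^S_{\F^p\LST*}$ may be a composite such as $\phi^P\circ\phi^T$, or $\phi^T\circ\phi^S$, etc., all with generators in the $p$-th filtration level of their respective spaces. Here I would invoke Lemma \ref{transgroup}: its first assertion gives $\pi_{\LST_s}\circ\sigma(\phi^P_{\F^l\mathscr P^m*})\circ\sigma(\phi^T_{\F^p\mathscr R*})\circ\sigma(\phi^S_{\F^q\LST*})(v)=\{\pi_{\LST_s}(v)\}$ for all $s<n+\min\{l,p,q\}$ when $v\in\F^n\LST$. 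Specializing $l=p=q$ (the current setting) and $v\in\F^q\LST$, this says the composite leaves every homogeneous component of grade $s< q+p$ unchanged, i.e.\ $\bigl(\sigma\circ\phi^S_{\F^p\LST*}(v)-v\bigr)\in\F^{p+q}\LST$, which is exactly the claimed inclusion. I would also use Propositions stated earlier (that $\phi^T_{\F^1\mathscr R*}$ and $\phi^P_{\F^\alpha\mathscr P^m*}$ are filtration-preserving and that the reparametrization/time maps satisfy $\phi^T_{Y^T*}(\F^n\LST)\subseteq\F^n\LST$, $\phi^P_{Y^P*}(\F^n\LST)\subseteq\F^n\LST$) to guarantee that composing with the other two transformations does not destroy the grade-raising estimate obtained for $\phi^S$.

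An alternative, more self-contained route avoiding a direct appeal to Lemma \ref{transgroup} would be: write $\sigma\circ\phi^S_{\F^p\LST*} = g_3\circ g_2\circ g_1$ where $\{g_1,g_2,g_3\}=\{\phi^P_{Y^P*},\phi^T_{Y^T*},\phi^S_{Y^S*}\}$ in some order with $Y^P\in\F^p\mathscr P^m$, $Y^T\in\F^p\mathscr R$, $Y^S\in\F^p\LST$; then telescope $g_3g_2g_1 - 1 = (g_3-1)g_2g_1 + (g_2-1)g_1 + (g_1-1)$, estimate each of the three ``difference operators'' $g_i-1$ separately using the exponential/Frechet-series formulas \eqref{parametermap}, $\phi^T_{Y^T}v = v+Y^Tv$, and $\exp\ad_{Y^S}$ — each shifts grade by at least $p$ — and use that the remaining undifferentiated factors $g_jg_i$ (being filtration-preserving automorphisms) map $\F^q\LST$ into $\F^q\LST$. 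Adding, $(g_3g_2g_1-1)(\F^q\LST)\subseteq\F^{p+q}\LST$. The main obstacle I anticipate is purely the reparametrization term: because parameter operators need not be linear in $Y^P$ (see Remark \ref{coex}), one must check that the nonlinear terms $\tfrac{1}{n!}D^n_\mu(v,Y^P)$ still raise grade by at least $p$; this follows from $\delta_{\mathscr P^m}$ being additive on products together with $Y^P\in\F^p\mathscr P^m$, so each application of $Y^P$ adds at least $p$ to the grade, but it is the one place where one cannot simply quote linearity and must do the weight count by hand.
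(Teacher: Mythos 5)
Your proposal is correct and follows the route the paper intends: the statement is presented as an immediate consequence of Lemma \ref{transgroup}, and your specialization $l=p=q$ with $v\in\F^q\LST$ (so $n=q$ in the lemma's notation, $\min\{l,p,q\}=p$, hence all components of grade $s<p+q$ are fixed) is exactly that derivation; the paper supplies no further argument. The only point worth a parenthetical is the edge case $q=0$ (the corollary allows $q\in\NZ$ while Lemma \ref{transgroup} is stated for $n\in\N$), which is handled trivially since $\F^0\LST=\LST$ and your direct $\exp(\ad_{Y^S})$ grade count already covers it.
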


\section{Parametric normal forms and the spectral sequences}\label{sec3}

In this section, we establish the setup for the method of spectral sequences for parametric normal forms of vector fields and apply it to a parametric generalized Hopf singularity with parametric dimension $N_0.$ Let $v\in \LST,$
$$A^{0,*}= \F^1 \mathscr{P}^m\times \F^1\mathscr{R}\times \F^1\LST \hbox{ and } \phi_{A^{0,*}} = \phi^P_{\F^1 \mathscr{P}^m*} \times \phi^T_{\F^1\mathscr{R}*}\times \phi^S_{\F^1\LST*} \leq {\rm Aut}_\f(\LST).$$ Then, define the map
\begin{eqnarray*}
\phi_{A^{0,*}}&\xrightarrow{\phi-1}& \LS \\
\phi_{(Y^P,Y^T,Y^S)}&\mapsto& \phi^P_{Y^P*}\circ\phi^T_{Y^T*}\circ\phi^S_{Y^S*}(v)-v,
\end{eqnarray*} where $\phi_{(Y^P,Y^T,Y^S)}=(\phi^P_{Y^P*}, \phi^T_{Y^T*},\phi^S_{Y^S*})$ and ${(Y^P,Y^T,Y^S)}\in A^{0,*}.$ Thus, our goal is to find $Y^{(\infty)}=(Y^P, Y^T, Y^{S})$ and $v^{(\infty)}= \phi_{Y^{(\infty)}}(v)$ such that $v^{(\infty)}$ represents the unique parametric normal form. This map is not linear, thus similar to Benderesky and Churchill \cite{benderchur}, we, instead, work with its initially $\f$-linear map, that is,
\begin{eqnarray*}
0\rightarrow&A^{0,*} &\xrightarrow{d}  \LS \rightarrow 0,
\end{eqnarray*} defined by
\begin{equation}\label{differential}
d(Y) = D_\mu (v)Y^P+ Y^Tv+ {\rm ad}_{Y^S}v, \hbox{ where } Y= (Y^P, Y^T, Y^S).
\end{equation}
Let $A^{1,*}= \LS,$ $A^{*,*}= A^{0,*}\oplus A^{1,*}$ and $A^{j,k}= \{\mathbf{0}\}$ $(\forall j\neq 0,1).$ Then, $(A^{*,*}, d, \F)$ is a locally finite graded filtered differential. Obviously, $d$ depends on $v$ and so does the spectral sequence $E^{j,k}_r.$ We sometimes denote the differential defined in equation (\ref{differential}) by $d(v, Y),$ {\em i.e.,} $d(Y)= d(v, Y).$
\begin{rem}
Sanders \cite{Sanders03} considered the spectral sequence $E^{j,k}_r$ by allowing $v^{(r)}$ to be updated during the process of normal form, {\em i.e.}, $E^{j,k}_r= E^{j,k}_r(v^{(r-1)}).$ This is one of the most common and convenient approach in normal form theory. Benderesky and Churchill's result \cite{benderchur} implies that updating $v^{(r)}$ does not change $E^{n,-n+1}_r$ (by proving that $E^{j,k}_r(v)$ is invariant under the group orbit of $v$). Therefore, both approaches are equivalent. One should note that \cite{benderchur} applied the method in the context of matrix normal form theory, that is why they chose not to update the differentials, indicating that ``the spectral sequences do not admit useful morphisms as one varies $v$'', {\em e.g.,} updating $v$ into $v^{(r)}.$ This is also true in a sense for normal form of vector fields, see Lemma \ref{risr} 
and \cite[Theorem 6.11]{benderchur}. It, however, is evident that updating the differential substantially reduces the complexity of computations, see \eg \cite[Example 2.4.4.]{Gazor}. 
This is the main reason which led Sanders to come up with his innovative idea. Therefore, we also follow his idea of using the converging differentials $d_r= d^{*,*}_r(v^{(r)})$ at each new level of the spectral sequence. Thus, the results~\cite[Theorem 2.6, 3.2, 3.12]{McCleary} are still valid here and so is the argument given in the last paragraph of section \ref{cohomol}.
\end{rem}

\begin{lem}\label{r1}
For any $v\in \LS,$ there exists an automorphism associated with $Y^{(1)} \in A^{0,*}$ such that it uniquely sends $v$ into $\widetilde{v}^{(1)}= \phi_{Y^{(1)}}(v)\in {\oplus}^\infty_{n=0} \NC_n^{(1)}\subset \LS,$ where $$E^{-n,n+1}_1= \frac{\LS_n}{\ta_n^{(1)}} = \frac{\NC_n^{(1)}+ \ta_n^{(1)}}{\ta_n^{(1)}}= \NC_n^{(1)}\quad(\forall n, n\in \NZ)$$ and the formal basis style is used for the complement spaces $\NC_n^{(1)}.$
\end{lem}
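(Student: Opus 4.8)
The plan is to realize $\widetilde{v}^{(1)}$ as the first-page normalization of $v$ by applying Lemma \ref{decom} to the linear differential $d$ of equation (\ref{differential}), grade by grade. First I would fix the grading: since $(A^{*,*},d,\F)$ is locally finite graded filtered, the level-zero differential $d_0$ splits into its graded pieces $d_0^{-n,n+1}\colon E_0^{-n,n+1}\to E_0^{-n+1,n+1}$, and on the $A^{1,*}$ side this amounts (after identifying $E_0^{-n,*}$ with $A^{*,n}$) to a linear map $d_n\colon \prod_{i<n}\bigl(\F^1\PM\times\F^1\R\times\F^1\LST\bigr)_{n-i}\to \LST_n$ whose image is exactly the subspace $\ta_n^{(1)}\subseteq\LST_n$ of terms removable at first order in grade $n$. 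Here one uses that $d$ has degree zero with respect to $\delta$ (the $-1$ in (\ref{grade}) was arranged precisely so that $\ad$, multiplication by $\R$, and $D_\mu$ all preserve grade), so the contributions from all three transformation types in grades $<n$ feed into $\LST_n$ and nothing from grade $\ge n$ does. This is the bookkeeping step and is routine once the grading conventions of section \ref{sec2} are in hand.

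Next I would invoke Lemma \ref{decom} with $V=\LST_n$, $W$ the relevant source space, $\ta_0=\{\0\}$ (or, when carrying the construction out inductively across grades, with $\ta_0$ the already-chosen complement data from lower grades), and the ordered restriction of $\B$ to $\LST_n$ as the finite formal basis. Lemma \ref{decom} then produces, uniquely, the subspace $\ta_n^{(1)}$ (the image of $d_n$, i.e. $\IM d_0^{-n,n+1}$ hitting grade $n$... more precisely $\ta_n^{(1)}=\ker$-complement giving $E_1^{-n,n+1}=\LST_n/\ta_n^{(1)}$) together with the complement $\NC_n^{(1)}$ satisfying $\NC_n^{(1)}\oplus\ta_n^{(1)}=\LST_n$, with $\B\cap\NC_n^{(1)}$ an ordered basis for $\NC_n^{(1)}$ (condition \ref{cond2} of Lemma \ref{decom}), and this is precisely the formal basis style applied to $\LST_n$. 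Taking $E_1^{-n,n+1}=H^{n+1}(E_0^{-n,*},d_0^{-n,*})$ and the definition of the first page gives the displayed isomorphisms $E_1^{-n,n+1}=\LST_n/\ta_n^{(1)}=(\NC_n^{(1)}+\ta_n^{(1)})/\ta_n^{(1)}=\NC_n^{(1)}$ for every $n$.

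It remains to upgrade the \emph{linear} statement to the \emph{nonlinear} automorphism $\phi_{Y^{(1)}}$ with $\widetilde{v}^{(1)}=\phi_{Y^{(1)}}(v)\in\bigoplus_n\NC_n^{(1)}$, and to see that $Y^{(1)}$ is unique. Here I would argue inductively on the grade, exactly as in the classical first-level normalization: suppose $v$ has been normalized through grade $n-1$ so that its grade-$\le n-1$ part already lies in $\bigoplus_{i<n}\NC_i^{(1)}$; by property \ref{cond3}/(4) of Lemma \ref{decom} there is $w\in W_n$ (a choice of generator/time term/parameter term of grade $n-i$ combinations summing to $n$) with $\pi_{\NC_n^{(1)}}$-part of $v_n-d_n(w)$ equal to the component in $\NC_n^{(1)}$; applying the corresponding composite transformation $\phi^P_{\F^n\PM*}\circ\phi^T_{\F^n\R*}\circ\phi^S_{\F^n\LST*}$ changes $v$ only in grades $\ge n$ (Corollary \ref{minusone}, resp. Lemma \ref{transgroup}) and brings $v_n$ into $\NC_n^{(1)}$ while leaving the already-normalized lower grades fixed; the formal basis costyle of section \ref{sec2} pins down $w$ uniquely modulo the kernel, hence pins down $Y^{(1)}$. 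Passing to the limit in the filtration topology (all these series converge, being formal) yields the single automorphism $\phi_{Y^{(1)}}$. \textbf{The main obstacle} I anticipate is exactly this last point: the three transformation types do not commute (Lemma \ref{3transf} gives only a semidirect product), so one must be careful that fixing an order of application $\phi^P\circ\phi^T\circ\phi^S$ and the costyle on $\PM,\R,\LST$ together genuinely yield a \emph{unique} $Y^{(1)}$ and a well-defined $\ta_n^{(1)}$ independent of artificial choices — this is where Lemma \ref{transgroup} (the "mode $\F^q$" reduction) and the coordinated style/costyle of section \ref{sec2} do the real work, and the rest is the standard grade-by-grade induction plus an appeal to Lemma \ref{decom}.
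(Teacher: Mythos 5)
Your proposal follows essentially the same route as the paper: Lemma \ref{decom} applied grade by grade to produce the unique complement spaces $\NC_n^{(1)}$ in the formal basis style, a grade-$n$ generator $Y_n\in A^{0,n}$ chosen at each step so that $v^{n}_n=v^{n-1}_n+\pi_{\LS_n}dY_n\in\NC_n^{(1)}$ while the already-normalized lower grades are untouched, composition into a single $\phi^P\circ\phi^T\circ\phi^S$ via Lemma \ref{3transf}, and convergence of $Y_{(n)}$ (hence of $v^n$) in the filtration topology via Lemma \ref{transgroup}. One small caution: on the first page $\ta_n^{(1)}=\pi_{\LS_n}\circ d(A^{0,n})$ is generated only by grade-$n$ generators acting on the grade-zero part $v^{(0)}_0$ (cross terms from lower-grade generators acting on higher-grade parts of $v$ are deferred to later pages), whereas your description of the source of $d_n$ as $\prod_{i<n}(\F^1\PM\times\F^1\R\times\F^1\LST)_{n-i}$ sweeps those in; since your induction step correctly draws the transformations from the $\F^n$ spaces, the argument itself is unaffected.
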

\begin{proof} 
Following Lemma \ref{decom}, there exists a unique vector space $\NC_n^{(1)}$ satisfying $$E^{n,-n+1}_1= \frac{\F^{n} \LS}{\F^n \LS\cap d(\F^{n}A^{0,*})+\F^{n+1}\LS} 
= \frac{\LS_n}{\pi_{\LS_n}\!\circ d A^{0,n}}= \frac{\NC_n^{(1)}+ \ta_n^{(1)}}{\ta_n^{(1)}}.$$ Thus, there exists $Y_n= (Y^P_n, Y^T_n, Y^S_n)\in A^{0,n}$ such that $v^{n-1}_n+ \ta_n^{(1)}= v^n_n+ \ta_n^{(1)},$ where $v^n_n\in \NC_n^{(1)}$ and $v^{n}_n= v^{n-1}_n+ \pi_{\LS_n}d Y_n.$ Therefore, $v^n= \Phi_n(v^{n-1})= \phi^P_{Y^P_n*}(\phi^T_{Y^T_n*}(\phi^S_{Y^S_n*}(v^{n-1})))=  v^0_0+ v^1_1+ \cdots + v^{n-1}_{n-1}+ (v^{n-1}_{n}+\pi_{\LS_{n}} d Y_n)+ \cdots,$ where $\pi_{\LS_{n}}v^n= v^n_n= v^{n-1}_{n}+ D_\mu v^{(0)}_0Y^P_{n}+Y^T_{n}v^{(0)}_0+ \ad_{Y^S_{n}}v^{(0)}_0.$ By Lemma \ref{3transf}, there exists $Y_{(n)}= (Y^P_{(n)}, Y^T_{(n)},Y^S_{(n)})\in A^{0,*}$ such that $v^n= \Phi_n\cdots \Phi_2\Phi_1(v^{0})=\phi^P_{Y^P_{(n)}}\circ\phi^T_{Y^T_{(n)}}\circ\phi^S_{Y^S_{(n)}}(v^{0}).$ Lemma \ref{transgroup} implies that $Y_{(n)}$ converges in filtration topology to an element $Y^{(1)}$ from $A^{0,*}$ and thus, $v^n$ is also convergent to $\widetilde{v}^{(1)},$ {\em i.e.,} $\widetilde{v}^{(1)}= \Phi_{Y^{(1)}}v^{(0)}.$

The proof is complete.
\end{proof}
We follow Murdock \cite[Section 5]{Mord04} to call $\widetilde{v}^{(1)}$ (and $\widetilde{v}^{(r)}$) the first (the $r$th) level extended partial parametric normal form.

\begin{lem} \label{risr} For any $v\in \LS,$ there exist parameter solution $Y^P\in \F^1\PM$, time solution $Y^T\in \F^1\R,$ and state solution $Y^{S}\in \F^1\LS$ such that their associated automorphisms transform $v$ into a unique vector field (the $r$th level extended partial parametric normal form) $$\widetilde{v}^{(r)}= \Phi_{(r)}(v) \in {\oplus}^\infty_{n=0} \NC_n^{(r)} \qquad (\Phi_{(r)}= \phi^P_{Y^P*}\!\circ\phi^T_{Y^T*}\!\circ\phi^S_{Y^S*}),$$ where $\NC_n^{(r)}$ follows Lemma \ref{decom}, {\em i.e.,} $E^{-n,n+1}_r= \LS_n/\ta_n^{(r)} = \frac{\NC_n^{(r)}+ \ta_n^{(r)}}{\ta_n^{(r)}}=\NC_n^{(r)}$ $\forall n\in \NZ.$
\end{lem}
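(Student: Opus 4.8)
The plan is to prove Lemma \ref{risr} by induction on the level $r$, using Lemma \ref{r1} as the base case $r=1$ and essentially replicating its argument at each successive level with the updated differential $d_r = d^{*,*}_r(v^{(r-1)})$. First I would set up the inductive hypothesis: assume we have already produced $\widetilde{v}^{(r-1)} = \Phi_{(r-1)}(v) \in \oplus_{n=0}^\infty \NC_n^{(r-1)}$, together with the spectral-sequence identification $E^{-n,n+1}_{r-1} = \NC_n^{(r-1)}$ for all $n$, via composed parameter, time, and state solutions lying respectively in $\F^1\PM$, $\F^1\R$, $\F^1\LS$. Then I would pass to the next page of the spectral sequence by invoking the definition $E^{-n,n+1}_r = H^{n+1-r+\cdots}(E_{r-1}^{*,*}, d_{r-1})$ — more concretely, $E^{-n,n+1}_r$ is a subquotient of $E^{-n,n+1}_{r-1} = \NC_n^{(r-1)}$ cut out by the $r$th differential $d_r$ acting from lower-grade pieces.

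The key technical step is the same one that powers Lemma \ref{r1}: apply Lemma \ref{decom} to the short complex built from the restricted differential $d_r$ at grade $n$. This produces unique subspaces $\NC_n^{(r)}$ and $\ta_n^{(r)} \subseteq \LS_n$ with $\ta_n^{(r-1)} \subseteq \ta_n^{(r)}$, $\NC_n^{(r)} \oplus \ta_n^{(r)} = \LS_n$ (relative to the ambient structure), and $E^{-n,n+1}_r = \LS_n/\ta_n^{(r)} = (\NC_n^{(r)} + \ta_n^{(r)})/\ta_n^{(r)} = \NC_n^{(r)}$, with the formal basis style selecting $\NC_n^{(r)}$ canonically. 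Grade by grade, for $n = 0, 1, 2, \dots$ in order, I would choose $Y_n^{(r)} = (Y^P_n, Y^T_n, Y^S_n) \in A^{0,n}$ — now necessarily with components in $\F^r\PM$, $\F^r\R$, $\F^r\LS$ since the $r$th differential has filtration jump $r$, which is exactly what keeps all lower grades $< n$ fixed (Corollary \ref{minusone} and the $\pi_{\LST_s}$-invariance in Lemma \ref{transgroup}) — so that the grade-$n$ component of $\Phi(Y_n^{(r)})(\widetilde{v}^{(r-1)})$ lands in $\NC_n^{(r)}$, using that its image modulo $\ta_n^{(r)}$ is forced and $d_r Y_n^{(r)}$ realizes any element of $\pi_{\LS_n}\circ d_r A^{0,n}$. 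Finally I would assemble the sequence of finite compositions $\Phi_n \cdots \Phi_1$ into a single transformation via Lemma \ref{3transf}, and invoke Lemma \ref{transgroup} to conclude that the solutions $Y_{(n)}^{(r)}$ converge in the filtration topology to limits $Y^P \in \F^1\PM$, $Y^T \in \F^1\R$, $Y^S \in \F^1\LS$ (the convergence is guaranteed because at grade $n$ the correction lies in $\F^{n+r-1}$ or deeper), hence $\widetilde{v}^{(r)} = \Phi_{(r)}(v)$ is well-defined and lies in $\oplus_n \NC_n^{(r)}$.

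I expect the main obstacle to be bookkeeping the interaction between the nonlinearity of the full transformation $\phi^P_{Y^P*}\circ\phi^T_{Y^T*}\circ\phi^S_{Y^S*}$ and the linear differential $d_r$ at higher levels: unlike level $1$, where $d$ acts on a freshly-normalized $v^{(0)}$, at level $r$ the differential is $d_r(v^{(r-1)})$ and one must verify that the higher-order (nonlinear) tails of the composed maps — the $\exp\ad$ tail, the Frechet-derivative tail in \eqref{parametermap}, and the time-multiplication tail — only contribute to grades strictly above $n$ when the generator sits in $\F^r$, so that grade-$n$ normalization genuinely sees only the linear piece $d_r$. This is precisely the content of Corollary \ref{minusone} together with the filtration-preservation Propositions for $\phi^T$ and $\phi^P$, so the obstacle is really one of careful invocation rather than new mathematics; the remaining subtlety is confirming that updating the differential from $d_{r-1}$ to $d_r$ does not disturb the already-fixed lower-grade normal form, which follows from the remark preceding the lemma (Benderesky--Churchill invariance of $E^{n,-n+1}_r$ under the group orbit of $v$) combined with the $\pi_{\LST_s}$-stability in Lemma \ref{transgroup}.
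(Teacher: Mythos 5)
Your proposal follows essentially the same route as the paper's own proof: Lemma \ref{r1} as the base case, the Sanders-style updated differential $d_{r}$ induced by $\widetilde{v}^{(r-1)}$ acting from the kernel of the previous level's differential into its cokernel, Lemma \ref{decom} to extract the unique $\NC_n^{(r)}$ and $\ta_n^{(r)}$ in the formal basis style, and Lemmas \ref{3transf} and \ref{transgroup} to compose the grade-by-grade transformations and pass to the filtration-topology limit, all wrapped in an induction on $r$ (the paper only writes out the step $r=2$ explicitly). Your discussion of the nonlinear tails and of why lower grades stay fixed is, if anything, more explicit than the paper's; the only quibble is a small indexing slip about which filtration level the grade-$n$-correcting generators live in (under the bidegree $(r,1-r)$ convention they have grade $n-r$, not $r$), which does not affect the argument.
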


\begin{proof}
By Lemma \ref{r1} there exists a unique (since $\widetilde{v}^{(1)}$ follows formal basis style) 
vector field $\widetilde{v}^{(1)}= \phi_{Y^{(1)}}(v)\in {\oplus}^\infty_{n=0} \NC_n^{(1)}= \NC^{(1)},$ where ${\rm Total}^{1} (E_1^{*,*})= \frac{\NC^{(1)}+ \ta^{(1)}}{\ta^{(1)}}= \NC^{(1)}.$ 
Now define a new differential $d_1= d_1^{n,-n}$ induced by $d(\widetilde{v}^{(1)}, Y_n)= D_\mu (\widetilde{v}^{(1)})Y^P_n+ Y^T_n\widetilde{v}^{(1)}+ {\rm ad}_{Y^S_n}\widetilde{v}^{(1)}$ (where $Y_n= (Y^P_n,Y^T_n,Y^S_n)),$ {\em i.e.,}
\begin{eqnarray}
\ker d_0^{n,-n}&\xrightarrow{d_1^{n,-n}}& \coker\, d_0^{n+1,-n-1}
\\\nonumber
Y_n+ Z^{n+1,-n-1}_0&\mapsto&d(\widetilde{v}^{(1)}, Y_n)\quad (\hbox{mode} \,\, \F^{n+1} \LS\cap d(\F^{{n+1}}A^{0,*})+\F^{n+2}\LS).
\end{eqnarray} Since $\widetilde{v}^{(1)}_0= v^{(0)}_0,$ the above map is well-defined. Thus,
$E^{n,-n}_2= Z^{n,-n}_2/Z^{n+1,-n-1}_{1}
,$ where $Z^{n,-n}_2= \F^{n} A^{0, *}\cap d^{-1}_1\F^{n+2}A^{1,*}$ and $Z^{n+1,-n-1}_{1}= \F^{n+1} A^{0, *}\cap d^{-1}_1\F^{n+2}A^{1,*}.$ Lemmas \ref{r1} and \ref{3transf} prove that there exists an automorphism $\Phi_{(2)}$ which sends $v$ into $\widetilde{v}^{(2)}= \Phi_{(2)}(v) \in {\oplus}^\infty_{n=0} \NC_n^{(r)},$ where
$\widetilde{v}^{(2)}$ is the second level extended partial parametric normal form of $v.$
This confirms our claim for $r=2,$ and therefore the proof is finished by mathematical induction.
\end{proof}

By the above Lemma there exist state solution $Y^{S, n},$ parameter solution $Y^{P, n}$ and time solution $Y^{T, n}$ such that
$$v^{(n)}= \Phi_n(v^{(n-1)})= \phi^T_{Y^{T, n}*}\circ\phi^P_{Y^{P, n}*}\circ\phi^S_{Y^{S, n}*}(v^{(n-1)})= \sum^\infty_{k=k_0} v^{(n)}_k, \,\,\, v^{(n)}_k\in \NC_k \quad \forall k\leq n.$$
$\{v^{(n)}\}^\infty_{n=0}\subset \LS$ is a convergent sequence to a vector field $v^{(\infty)}\in \LS$ with respect to filtration topology. We call $v^{(\infty)}$ an infinite level (order or unique) parametric normal form. The above argument leads to the following theorem.

\begin{thm}\label{generalthm} Let $v= v^{(0)}= \sum^\infty_{n=k_0}v^{(0)}_n\in \LS,$ where $v^{(0)}_n\in \LS_n.$ Then, there exist a sequence of near-identity maps $\{\Phi_n\}_{n}$ which transforms $v^{(n)}$ to $v^{(n+1)}= \Phi_{n+1}(v^{(n)})$ $\forall n\in \NZ$ in which $v^{(n)}$ converges {\rm(}with respect to filtration topology{\rm )} to an infinite level parametric normal form $v^{(\infty)}= \sum^\infty_{r=k_0}v^{(\infty)}_r,$ where $v^{(\infty)}_r\in \NC_r^{(r)},$ $E^{-r,r+1}_\infty= E^{-r,r+1}_r= \LS_r/\ta_r^{(r)} = \frac{\NC_r^{(r)}+ \ta_r^{(r)}}{\ta_r^{(r)}}= \NC_r^{(r)}$ $\forall r\in \NZ$ and $\NC_r^{(r)}$ follows Lemma \ref{decom}. Furthermore, there exist $Y_{(n)}$ and $Y^{(\infty)}= (Y^P, Y^T, Y^S)\in A^{0,*}$ such that $\Phi_{(n)}= \Phi_n\circ\cdots \Phi_2\circ \Phi_1$ is associated with $Y_{(n)},$ $Y_{(n)}$ converging to $Y^{(\infty)}$ with respect to filtration topology, and $v^{(\infty)}= \phi^T_{Y^{T}*}\circ\phi^P_{Y^{P}*}\circ\phi^S_{Y^{S}*}(v).$
\end{thm}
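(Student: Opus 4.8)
The plan is to assemble Theorem~\ref{generalthm} entirely out of the machinery already built in Lemmas~\ref{r1}, \ref{risr}, \ref{3transf}, \ref{transgroup} and Corollary~\ref{minusone}, so that no new analysis is required beyond bookkeeping about convergence in the filtration topology. First I would observe that Lemma~\ref{risr} already produces, for each $r\in\N$, a unique $r$th level extended partial parametric normal form $\widetilde{v}^{(r)}=\Phi_{(r)}(v)\in{\oplus}^\infty_{n=0}\NC_n^{(r)}$ together with the identification $E^{-n,n+1}_r=\NC_n^{(r)}$. The passage from the ``$r$th level'' picture to the ``$n$th step'' picture $\{v^{(n)}\}$ is a matter of reindexing: at step $n$ one applies the single composite near-identity map $\Phi_{n+1}$ (the map produced at level $n+1$ of Lemma~\ref{risr} restricted to the terms of grade $n+1$ and higher) and records that $v^{(n+1)}=\Phi_{n+1}(v^{(n)})$ agrees with $v^{(n)}$ through grade $n$, i.e. $\pi_{\LS_k}v^{(n+1)}=\pi_{\LS_k}v^{(n)}$ for all $k\le n$, with the new grade-$(n+1)$ term landing in $\NC_{n+1}^{(n+1)}$.

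Next I would establish convergence of $\{v^{(n)}\}$ to some $v^{(\infty)}$ in the filtration topology. By Corollary~\ref{minusone}, the map $\Phi_{n+1}-1$ sends $\F^{n+1}\LS$ into $\F^{2n+2}\LS$ (taking $p=q=n+1$ there, or more crudely $\Phi_{n+1}-1$ raises grade by at least one on $\F^{n+1}$), so $v^{(n+1)}-v^{(n)}\in\F^{n+1}\LS$; hence for any prescribed $N$ the terms $\pi_{\LS_k}v^{(n)}$ with $k\le N$ stabilize once $n\ge N$. Since the filtration topology is precisely the finest topology in which all formal series converge (as recalled in Section~\ref{cohomol}), this stabilization is exactly convergence, and the limit $v^{(\infty)}=\sum_r v^{(\infty)}_r$ has $v^{(\infty)}_r=\pi_{\LS_r}v^{(r)}\in\NC_r^{(r)}$. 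The collapse statement $E^{-r,r+1}_\infty=E^{-r,r+1}_r$ is then immediate: by construction the grade-$r$ component is already normalized at level $r$, so the differential $d_s^{-r,*}$ on $E_s^{-r,*}$ is zero for $s\ge r$ in the relevant bidegree (no further reduction occurs), and by the strong convergence of the spectral sequence recorded at the end of Section~\ref{cohomol} we get $E^{-r,r+1}_\infty\cong\F^{-r}H^{1}(A,d)/\F^{-r+1}H^{1}(A,d)\cong\NC_r^{(r)}$, which is what the identification in Lemma~\ref{decom} gives.

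Finally I would assemble the single transformation. At step $n$ the composite $\Phi_{(n)}=\Phi_n\circ\cdots\circ\Phi_1$ is, by repeated application of Lemma~\ref{3transf} (which lets one reorder the three factor-transformations and recombine them into one of each type), associated with some $Y_{(n)}=(Y^P_{(n)},Y^T_{(n)},Y^S_{(n)})\in A^{0,*}$. Lemma~\ref{transgroup} gives that $Y_{(n+1)}$ and $Y_{(n)}$ agree modulo $\F^{n+1}$ in each of the three spaces (since the $(n{+}1)$st increment lives in $\F^{n+1}$), so $\{Y_{(n)}\}$ is Cauchy and hence convergent in the filtration topology to some $Y^{(\infty)}=(Y^P,Y^T,Y^S)\in A^{0,*}$; continuity of the (componentwise) exponential/Fr\'echet-derivative maps in the filtration topology then yields $v^{(\infty)}=\phi^T_{Y^{T}*}\circ\phi^P_{Y^{P}*}\circ\phi^S_{Y^{S}*}(v)$. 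The one point that needs care --- and what I expect to be the main obstacle --- is verifying that this last continuity/limit-interchange is legitimate: one must check that the composite $\phi^T\circ\phi^P\circ\phi^S$ depends continuously on $(Y^P,Y^T,Y^S)$ in the filtration topology and that passing to the limit commutes with it, which uses that each of $\phi^P$, $\phi^T$, $\phi^S$ is filtration preserving (Propositions in Section~\ref{sec2}) and that, by Corollary~\ref{minusone}, a perturbation of the generator by an element of $\F^{p}$ perturbs the output by an element of $\F^{p}$; the order-of-composition issue (here $\phi^T\circ\phi^P\circ\phi^S$ versus the $\phi^P\circ\phi^T\circ\phi^S$ used at each finite step) must be reconciled via the reordering identities of Lemma~\ref{3transf}, and keeping track of which generators change under this reordering in the limit is the delicate bookkeeping step.
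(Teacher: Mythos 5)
Your proposal is correct and follows essentially the same route as the paper: the theorem is assembled from Lemma \ref{risr} (finite-level extended partial normal forms), the stabilization of the two-column spectral sequence at each bidegree (which the paper establishes by the explicit computation $Z^{p,-p+1}_r=\F^pA^{1,*}$ and $B^{p,-p+1}_r=\F^pA^{1,*}\cap d(\F^1A^{0,*})$ for $p-r\le 1$, and which you obtain by noting that the relevant differentials vanish from page $r$ on), and convergence in the filtration topology via local finiteness of $A^{0,*}$, with $v^{(\infty)}_r=\widetilde{v}^{(r)}_r$. The only cosmetic difference is that you spell out the convergence of $Y_{(n)}$ and the reordering of the three factor maps via Lemmas \ref{3transf} and \ref{transgroup}, details the paper compresses into the phrase ``the rest of the proof follows Lemma \ref{risr}.''
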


\begin{proof}
For any $p, r\in \NZ,$ $Z^{p,-p+1}_r= \F^{p}A^{1, *}= Z^{p,-p+1}_\infty,$ while assuming $p- r\leq 1,$ we have $B^{p,-p+1}_{r} 
= \F^p A^{1, *}\cap d(\F^{1}A^{0, *}) =B^{p, -p+1}_\infty.$ Then, $$E^{r,-r+1}_r= \frac{Z^{r,-r+1}_r}{Z^{r+1,-r}_{r-1}+ B^{r,-r+1}_{r-1}}= E^{r,-r+1}_\infty \hbox{ and } {\rm Total}^{1} (E_\infty^{*,*})= {\oplus}^\infty_{r=0}E^{r,-r+1}_r.$$ Since $A^{0, *}$ is locally finite, the rest of the proof follows Lemma \ref{risr} and $v^{(\infty)}_r= \widetilde{v}^{(r)}_r,$ where $\widetilde{v}^{(r)}= \sum^\infty_{n=0} \widetilde{v}^{(r)}_n$ denotes the $r$th level extended partial parametric normal form.
\end{proof}

In the rest of this section we apply the method of spectral sequence described above to obtain a parametric normal form for generalized Hopf singularity of parametric dimension $N_0$. The following lemma presents the first level parametric normal forms of a system with generalized Hopf singularity.

\begin{lem}\label{firstlevel}
There exists $({Y^T}, 0,{Y^S})\in {\oplus}^\infty_{n=1} A^{0,n}$ which uniquely transforms $v$, given by
\begin{eqnarray}\label{originalvect}
v= v^{(0)}&= &Y_{10}+\sum_{i+j+r=2,\, |\mathbf{n}|=r,\, i+j\geq 1}^\infty \!\!\! a^{(0)}_{ij\mathbf{n}}X_{ij}\mu^{\mathbf{n}}+
\sum_{i+j+r=2,\, |\mathbf{n}|=r,\, i+j\geq 1}^\infty \!\!\! b^{(0)}_{ij\mathbf{n}}Y_{ij}\mu^{\mathbf{n}},
\end{eqnarray}
into the first level extended partial parametric normal form
\begin{equation}\label{1extend}
\widetilde{v}^{(1)}= \phi^{P}_{Y^P}\circ\phi^{T}_{Y^T}\circ\phi^{S}_{Y^S}(v) \in Y_{10}+ {\oplus}^\infty_{n=1} \NC_{n}^{(1)},
\end{equation} where $\NC_{n}^{(1)}= \Span \{X_{(k+1)k}\mu^{\mathbf{n}}| n= 2k+r\alpha,\mathbf{n}\in \NZ^m, |\mathbf{n}|= r\}\subset \LST_{H, n}$ $(\forall n\in \N).$ Furthermore, the first level parametric conormal form space is ${\oplus}^\infty_{n= 1} \{0\}\times \PM_n\times \LST_{H, n}.$
\end{lem}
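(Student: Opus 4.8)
The plan is to reduce the proof of Lemma~\ref{firstlevel} to the machinery already in place—namely Lemmas~\ref{r1} and~\ref{risr} applied to the first page of the spectral sequence—together with an explicit computation of the first-level differential $d_0$ restricted to each grade. First I would observe that, since $v^{(0)}$ has the pure linear term $Y_{10}$ and all remaining terms sit in $\F^1\LST$, the grade-zero part is already in the normal form (with $\NC_0^{(1)}$ spanned by $Y_{10}$, i.e.\ the reality constraint is irrelevant at this level), so the action is entirely in grades $n\ge 1$. By Lemma~\ref{risr} with $r=1$ (equivalently Lemma~\ref{r1}), there is a unique automorphism, associated with some $Y^{(1)}=(Y^P,Y^T,Y^S)\in A^{0,*}$, carrying $v$ to $\widetilde v^{(1)}\in\bigoplus_{n\ge0}\NC_n^{(1)}$, where $\NC_n^{(1)}$ is the formal-basis-style complement of $\ta_n^{(1)}=\pi_{\LST_n}\circ d(A^{0,n})=\IM\pi_{\LST_n}d_0^{-n,n}$. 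So the whole content of the lemma is the identification of this complement with $\Span\{X_{(k+1)k}\mu^{\m}\mid n=2k+r\alpha,\ |\m|=r\}$, and the verification that the parameter solution may be taken to be zero at this level.

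The core computation I would carry out is the image of $d_0$ at grade $n$. Writing $d(Y)=D_\mu(v)Y^P+Y^Tv+\ad_{Y^S}v$ and passing to the associated graded (so $v$ is replaced by its linear part $Y_{10}$, since $D_\mu(v)Y^P$, $Y^Tv-Y^TY_{10}$, and $[Y^S,v]-[Y^S,Y_{10}]$ all land in strictly higher filtration), I need: (i) $\ad_{Y_{10}}$ acting on $\LST_n$, which by the standard Hopf/$\mathfrak{sl}_2$-type bracket $[Y_{10},X_{ij}\mu^\m]$ and $[Y_{10},Y_{ij}\mu^\m]$ is, grade by grade, a linear map whose image is the span of all $X_{ij}\mu^\m,Y_{ij}\mu^\m$ of grade $n$ except those on the ``diagonal'' $i=j+1$ (the resonant terms), with $X_{(k+1)k}\mu^\m$ surviving on the amplitude side; (ii) the contribution $Y^TY_{10}$ of time rescaling, which by the product formula $Z_i\mu^{\n_1}Y_{10}=Y_{(i+1)i}\mu^{\n_1}$ hits exactly the $Y_{(k+1)k}\mu^\m$ terms—i.e.\ the phase diagonal—allowing all $Y$-type resonant terms to be removed; (iii) the contribution $D_\mu(Y_{10})Y^P$, which vanishes because $Y_{10}$ is parameter-free, so the parameter map contributes nothing linearly at grade $n\ge1$ and $Y^P$ can be set to $0$. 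Assembling (i)--(iii), $\ta_n^{(1)}=\pi_{\LST_n}d_0(A^{0,n})$ is everything in $\LST_n$ except the amplitude diagonal $X_{(k+1)k}\mu^\m$; and since the formal basis puts $Y$ before $X$ and no-parameter before parameter (rules \ref{c1}--\ref{c3}), the formal-basis-style complement selects precisely $\Span\{X_{(k+1)k}\mu^\m\}$, which is the claimed $\NC_n^{(1)}\subseteq\LST_{H,n}$. For the last sentence of the lemma I would note that the first-level conormal (transformation) space is the formal-basis-style complement, inside $A^{0,*}=\F^1\PM\times\F^1\R\times\F^1\LST$, of $\ker d_0$ together with the already-used directions; the time directions are entirely consumed (each $Z_i\mu^\m$ maps injectively onto a distinct $Y$-diagonal term), the parameter directions are entirely unused (kernel, by (iii)), and among state directions only the non-resonant ones are used—so what remains at grade $n$ is $\{0\}\times\PM_n\times\LST_{H,n}$.

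The main obstacle I expect is bookkeeping the bracket and product formulas carefully enough to be sure that the images in (i) and (ii) fit together exactly—no overlap, no gap—so that after removing $\IM\ad_{Y_{10}}+\f\{Y^TY_{10}\}$ one is left with exactly the amplitude diagonal and nothing else; in particular one must check that time rescaling does not accidentally also reach some $X$-diagonal term (it does not, since $Z_i\mu^{\n_1}Y_{10}$ produces only $Y$-type terms) and that the parameter weight $\alpha$ enters the grade bookkeeping consistently in $\delta$, $\delta_\R$, $\delta_{\PM}$ so that ``grade $n$'' means the same thing across the three spaces. Once those index identities are nailed down, the conclusion is immediate from Lemma~\ref{decom} and the definition of the formal basis order; the convergence of $Y_{(n)}$ and of $v^{(n)}$ is already handled by Lemma~\ref{transgroup} as in the proof of Lemma~\ref{r1}, so I would simply cite it rather than repeat it.
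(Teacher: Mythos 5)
Your proposal is correct and follows essentially the same route as the paper: pass to the associated graded so that $d_0$ acts through $Y_{10}$, observe $D_\mu Y_{10}\,\PM_n=\{\0\}$, $\R_n Y_{10}=\Span\{Y_{(k+1)k}\mu^{\m}\}$ and $\ad_{Y_{10}}\LS_n=\LS_{H^c,n}$, so that $\ta_n^{(1)}$ misses exactly the amplitude diagonal and Lemma~\ref{decom} yields $\NC_n^{(1)}=\Span\{X_{(k+1)k}\mu^{\m}\}$, with the kernel computation giving the conormal space $\{0\}\times\PM_n\times\LST_{H,n}$ and Lemma~\ref{r1} finishing the argument. The only caveat is a wording slip in your last paragraph: the conormal form space \emph{is} $\ker d_0$ modulo higher filtration (the unused directions), not a complement of it, but the space you actually write down is the correct one.
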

\begin{proof} Let $\LS= \Span \{Y_{10}\}\oplus^\infty_{k=1} \LST_k$ and $n\in \N.$ Then, $Z^{n,-n+1}_1= \F^{n}\LS,$ $Z^{n+1,-n}_{0}= \F^{n+1} \LS$ and $B^{n,-n+1}_{0}= \F^n \LS\cap d_0(\F^{n}A^{0,*}),$ where $d_0(v^{(0)}, (Y^T, Y^P, Y^S))= D_\mu v^{(0)} Y^P+ Y^T v^{(0)}+ \ad_{Y^S}v^{(0)}.$ Therefore,
\begin{eqnarray*}
B^{n,-n+1}_{0}+ Z^{n+1,-n}_{0}&=& D_\mu Y_{10} \mathscr{P}^m_n+ \mathscr{R}_n Y_{10}+ \ad_{Y_{10}}\LS_n+ \F^{n+1} \LS \\
&=& \mathscr{R}_n Y_{10}+ \ad_{Y_{10}}\LS_n+ \F^{n+1} \LS\\
&=& \Span \{Y_{(k+1)k}\mu^{\mathbf{n}}| n= 2k+r, k, r\in \NZ\} + \LS_{H^c, n} + \F^{n+1} \LS\\
&=& \ta_n.
\end{eqnarray*} Thus, by Lemma \ref{decom}, $E^{n,-n+1}_1 = \frac{\F^n\LS}{\ta_n^{(1)}}
= \frac{\NC_n^{(1)}+ \ta_n^{(1)}}{\ta_n^{(1)}},$ where $$\NC_n^{(1)}= \Span \{X_{(k+1)k}\mu^{\mathbf{n}}| n=2k+r, \alpha |\mathbf{n}|= r, k, r\in \NZ\}.$$
Since $Z^{n,-n}_1 
= \{0\}\times \PM_n\times \LS_{H, n}+\F^{n+1}A^{0,*},$  $Z^{n+1,-n-1}_{0} 
= \F^{n+1} A^{0, *}$ and $B^{n,-n}_{0}= \{\mathbf{0}\}$,
$E^{n,-n}_1= \{0\}\times \PM_n\times \LS_{H, n}+\F^{n+1}A^{0,*}/\F^{n+1}A^{0,*}.$ Then,
$$ {\rm Total}^{1} (E_1^{*,*})= {\oplus}^\infty_{r=0} E^{r,-r+1}_1\cong {\oplus}^\infty_{r=0} \NC_r^{(1)}\subset \LS.$$ Besides,
${\rm Total}^{0} (E_1^{*,*})= {\oplus}^\infty_{n=1}E^{n,-n}_1\cong {\oplus}^\infty_{n=1} \{0\}\times \PM_n\times \LS_{H, n}.$

The rest of the proof is straightforward by Lemma \ref{r1}.
\end{proof}

The parametric dimension of the vector field $v$ is $N_0$ if and only if for any natural number $r,$ $\pi_{\f X_{(i+1)i}}\tilde{v}^{(r)}=0$ $(\forall i<N_0)$ and $\pi_{\f X_{(N_0+1)N_0}}\tilde{v}^{(r)}\neq 0.$ In other word, the parametric dimension of the system $\tilde{v}^{(i)}$ ($i\neq 0$) is $N_0$ if and only if its representation in the polar coordinates has no amplitude terms of grade less than $2N_0+1$ while has a nonzero term of $\rho^{2N_0+1}.$ When $N_0>1,$ the system is associated with a generalized Hopf singularity. Let us denote
\begin{equation}\label{matA}
A^{(1)}= [D_\mu \widetilde{v}^{(1)}(\mu= \mathbf{0})]_{\{X_{(k+1)k}\}^{N_0-1}_{k=0}}
\end{equation} for the unique matrix representation of linear map $D_\mu \widetilde{v}^{(1)}$ at $\mu= \mathbf{0}$ on the finite dimensional vector space $W= \Span\{X_{(k+1)k}\}^{N_0-1}_{k=0}$ in terms of its ordered basis $\{X_{(k+1)k}\}^{N_0-1}_{k=0}.$
When ${\rm rank}_{\mathbb{F}}(A^{(1)})$ equals the parametric dimension ($N_0$) of the system, we say $v$ is generic with respect to parameter or parameter generic. Thus, parameter generic merely means that the parameters are in the right places for having a structurally stable amplitude equation. Note that these assumptions are essentially useful in evaluating the converging differentials $d_{r}$ and the individual level spaces of the  spectral sequences. We omit the proof of the following lemma and Theorem \ref{specmain} for brevity.
\begin{lem} \label{notmodifiedlem1}
Assume the hypothesis given in Lemma \ref{firstlevel} and that the vector field $v$ is parametric generic and of parametric dimension $N_0$, $m=N_0,$ and $\alpha=2N_0+1.$ Then,
\begin{equation}\label{totalN0}
{\rm Total}^{1} (E_{2N_0}^{*,*})= {\oplus}^\infty_{r=0} E^{r,-r+1}_{2N_0}\cong {\oplus}^\infty_{r=0} \NC_r^{(2N_0)}\subset \LS,
\end{equation} where $\NC_{n}^{(2N_0)} = \Span \{X_{(k+1)k}| n= 2k\}\subset \LST_{H, n}$ for any $n, 1\leq n\leq 2N_0,$ for some $\sigma\in S_{N_0},$
$$\NC_{n}^{(2N_0)} = \Span (\{X_{k(k-1)}\mu_{\sigma(k)}| n= 2k+2N_0-1\}\cup\{X_{(k+1)k}| n= 2k\})$$
$\forall n, 2N_0<n\leq 4N_0+1,$ and $\NC_{n}^{(2N_0)}= \Span \{X_{(k+1)k}\mu^{\mathbf{n}}| k\geq N_0, n= 2k+r\alpha,\mathbf{n}\in \NZ^{N_0}, r= |\mathbf{n}|\}$ $\forall n\geq 2N_0.$ Furthermore, the $2N_0$th level conormal form space is ${\rm Total}^{0} (E_{2N_0}^{*,*})= {\oplus}^\infty_{r=1}E^{n,-n}_{2N_0}\cong {\oplus}^\infty_{r=1} \{0\}\times \{\mathbf{0}\}\times \LS_{H, n}.$

$E_{r}^{*,*}$ is strongly convergent to $E_{\infty}^{*,*}$ (more precisely, the filtration $\F$ is strongly convergent in the sense of Cartan-Eilenberg) and $E^{*,*}_r$-terms collapses at $r= 2N_0+1,$ {\em i.e.,}
\begin{equation}\label{totalinfty}
{\rm Total}^{1} (E_{\infty}^{*,*})= {\rm Total}^{1} (E_{2N_0+1}^{*,*})= {\oplus}^\infty_{r=0} E^{r,-r+1}_{2N_0+1}\cong {\oplus}^\infty_{r=0} \NC_r^{(2N_0+1)}\subset \LS,
\end{equation} where
\begin{equation}\label{componenttotalinfty}
\NC_{n}^{(2N_0+1)} = \NC_{n}^{(2N_0)} \quad (1\leq n\leq 2N_0), \NC_{n}^{(2N_0+1)}= \{\mathbf{0}\}\quad (2N_0< n< 4N_0),
\end{equation} and
\begin{equation}\label{componenttotalinfty1}
\NC_{n}^{(2N_0+1)}= \Span \{X_{(2N_0+1)2N_0}\mu^{\mathbf{n}}| n= 4N_0+r, \mathbf{n}\in \NZ^{N_0}, |\mathbf{n}|= r\}\quad (\forall n, 4N_0\leq n).
\end{equation}
\end{lem}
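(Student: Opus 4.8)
The plan is to run the spectral sequence machinery set up in Theorem~\ref{generalthm} and Lemma~\ref{risr}, starting from the first-level output of Lemma~\ref{firstlevel} and tracking the successive differentials $d_r$ on the relevant diagonal $E^{n,-n+1}_r$ until they stabilize. First I would fix the grading parameters $m=N_0$, $\alpha=2N_0+1$ and recall that after the first level the vector field lies in $Y_{10}+\oplus_n \NC^{(1)}_n$ with $\NC^{(1)}_n=\Span\{X_{(k+1)k}\mu^{\mathbf n}\mid n=2k+r\alpha,\ |\mathbf n|=r\}$. The converging differential $d_1$ on this diagonal is induced by $d(\widetilde v^{(1)},Y)=D_\mu(\widetilde v^{(1)})Y^P+Y^T\widetilde v^{(1)}+\ad_{Y^S}\widetilde v^{(1)}$; because $v$ is parameter generic of parametric dimension $N_0$, the matrix $A^{(1)}$ of Equation~\eqref{matA} has rank $N_0$, so $D_\mu\widetilde v^{(1)}$ maps $\PM_n$ onto the span of $X_{k(k-1)}$-type terms for $k<N_0$ in the appropriate grades. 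I would use this rank condition together with the product formulas $Z_i\mu^{\mathbf n_1}X_{jk}\mu^{\mathbf n_2}=X_{(i+j)(i+k)}\mu^{\mathbf n_1+\mathbf n_2}$ (time action) to show that by level $2N_0$ every monomial $X_{(k+1)k}\mu^{\mathbf n}$ with $k<N_0$ and $|\mathbf n|\ge 1$ has been eliminated, leaving exactly the spaces $\NC^{(2N_0)}_n$ claimed: the unparametrized amplitude coefficients $X_{(k+1)k}$ in grades $2k\le 2N_0$, a single surviving layer of $X_{k(k-1)}\mu_{\sigma(k)}$ terms (the permutation $\sigma\in S_{N_0}$ recording which parameter pairs with which coefficient, dictated by the formal basis order in Remark~\ref{order}), and the ``high'' part $k\ge N_0$ untouched.

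Second I would compute the single remaining nontrivial differential $d_{2N_0}$, which has bidegree $(2N_0,1-2N_0)$ and therefore can only act from the surviving $X_{k(k-1)}\mu_{\sigma(k)}$ terms (grades $2k+2N_0-1$, $k<N_0$) into the high part. Here I would invoke the parameter-genericity again — via the linearity condition \eqref{linpar} guaranteed by the parametric-dimension hypothesis — to argue that these $d_{2N_0}$-images are precisely what is needed to kill all $X_{(k+1)k}\mu^{\mathbf n}$ with $N_0\le k$ and $k+1\le 2N_0$, i.e.\ all grades $2N_0<n<4N_0$, while the image never reaches grade $4N_0+r$ terms $X_{(2N_0+1)2N_0}\mu^{\mathbf n}$. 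Concretely this is a dimension/rank count on each fixed grade: the map $d_{2N_0}$ restricted to grade $n$ is injective on the $(X_{k(k-1)}\mu_{\sigma(k)})$-layer with image exactly complementing the middle $X_{(k+1)k}$ layers. This yields $E^{n,-n+1}_{2N_0+1}=\NC^{(2N_0+1)}_n$ as in \eqref{componenttotalinfty}--\eqref{componenttotalinfty1}, and a parallel (easier) computation on the $E^{n,-n}_r$ diagonal gives the conormal-form space, using that once all usable parameter and time terms are consumed only $\LS_{H,n}$ survives in the transformation space.

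Third I would check collapse: for $r\ge 2N_0+1$ the differential $d_r$ has bidegree $(r,1-r)$ with $r-1\ge 2N_0$, so on the $E^{*,-*+1}$ diagonal it would have to map an $X_{(2N_0+1)2N_0}\mu^{\mathbf n}$-term to something of strictly larger parameter-degree within the same space $\Span\{X_{(2N_0+1)2N_0}\mu^{\mathbf n'}\}$ — but the source of $d_r$ on this diagonal is $Z^{n,-n}_r$, and by the computation of $B^{p,-p+1}_r$ and $Z^{p,-p+1}_r$ as in the proof of Theorem~\ref{generalthm} (where $Z^{p,-p+1}_r=\F^p A^{1,*}=Z^{p,-p+1}_\infty$ and $B^{p,-p+1}_r$ stabilizes once $p-r\le 1$) there is nothing left to hit, so $d_r=0$ and $E^{*,*}_r=E^{*,*}_{2N_0+1}$. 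Strong convergence in the sense of Cartan--Eilenberg is then automatic since $\F$ is Hausdorff and exhaustive, exactly as recorded in Section~\ref{cohomol} and invoked in Theorem~\ref{generalthm}. The main obstacle I anticipate is the second step: pinning down the permutation $\sigma$ and verifying that the surviving $X_{k(k-1)}\mu_{\sigma(k)}$ layer is mapped by $d_{2N_0}$ exactly onto a complement of the middle $X_{(k+1)k}$ blocks without overshooting into grade $4N_0$ — this is where the interplay between parameter genericity, the weight $\alpha=2N_0+1$, and the formal-basis ordering rules \ref{c1}--\ref{c3} is most delicate and where a careful bookkeeping of which $Z_i\mu^{\mathbf n_1}$ and $D_\mu$-contributions land in which grade is unavoidable.
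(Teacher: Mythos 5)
First, a point of reference: the paper gives \emph{no} proof of Lemma \ref{notmodifiedlem1} (it is explicitly omitted ``for brevity''), so your proposal can only be measured against the machinery of Sections \ref{cohomol}--\ref{sec3}. Your overall strategy --- start from Lemma \ref{firstlevel}, determine at which level each kind of generator first produces a nonzero differential, then verify collapse --- is the right one, and your third step (collapse for $r>2N_0+1$, Cartan--Eilenberg convergence from Hausdorff and exhaustive) is sound. The problems are in the mechanism of your first two steps. (i) You invoke the time action $Z_i\mu^{\mathbf n_1}X_{jk}\mu^{\mathbf n_2}=X_{(i+j)(i+k)}\mu^{\mathbf n_1+\mathbf n_2}$ to eliminate parametrized amplitude terms at levels $1,\dots,2N_0$. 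But by Lemma \ref{firstlevel} the first-level conormal form space has $\{0\}$ in the time slot: $Y^T\mapsto Y^TY_{10}$ is injective, so every time generator is consumed by $d_0$ and none survives to any $d_r$ with $r\geq1$. Reserving time terms for later levels is precisely the point of the \emph{distorted} sequence of Section \ref{alter} (the spaces $\widetilde{\R}_n^{(i)}$); it is not available in the undistorted sequence of this lemma. (ii) Your claim that ``by level $2N_0$ every monomial $X_{(k+1)k}\mu^{\mathbf n}$ with $k<N_0$ and $|\mathbf n|\geq1$ has been eliminated'' contradicts the very statement being proved: $\NC_n^{(2N_0)}$ retains $X_{k(k-1)}\mu_{\sigma(k)}$, and these terms persist into the normal form \eqref{inftyextend}.

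Second, the scheduling and direction of the differentials are off. In the two-term complex $0\to A^{0,*}\to A^{1,*}\to0$ every $d_r$ maps out of the generator column $E_r^{n,-n}$ into the vector-field column; $d_{2N_0}$ cannot ``act from the surviving $X_{k(k-1)}\mu_{\sigma(k)}$ terms,'' which live in $A^{1,*}$. A grading count fixes what actually happens: the surviving state generators lie in $\LS_H$ and first act at level $2N_0$ through $\ad_{Y^S}\bigl(a^{(1)}_{(N_0+1)N_0\mathbf 0}X_{(N_0+1)N_0}\bigr)$ (the parametric-dimension hypothesis makes $X_{(N_0+1)N_0}$ the lowest nonlinear term of $\widetilde v^{(1)}$), which is the classical Baider--Churchill reduction killing $X_{(k+1)k}$ for $k>N_0$, $k\neq2N_0$; and the near-identity parameter generators all have grade at least $\alpha=2N_0+1$, so $D_\mu(\widetilde v^{(1)})Y^P$ raises the filtration by at least $2N_0+1$ and $D_\mu$ cannot contribute to $d_1$ at all. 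The rank condition on $A^{(1)}$ therefore does not enter through $d_1$ as you assert; it enters through the \emph{invertible linear} reparametrization, which has grade $0$, lies outside $A^{0,*}=\F^1\PM\times\F^1\R\times\F^1\LS$, and must be applied as a separate preprocessing step (this is why Theorem \ref{specmain} and the corollary following Lemma \ref{notmodifiedlem1} list it separately from the near-identity maps). A complete proof must make this separation explicit and then derive the one-term-per-grade spaces $\Span\{X_{k(k-1)}\mu_{\sigma(k)}\}$ from the invertibility of $A^{(1)}$; your proposal defers exactly this, the only genuinely delicate part, to ``careful bookkeeping.''
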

\begin{cor}
Let the vector field $v$ given by equation (\ref{originalvect}) be associated with a parametric generic system of $N_0$-parametric dimension and $m=N_0.$ Then, there exists $Y= (Y^T, Y^p, Y^S)\in {\oplus}^\infty_{n=1} A^{0,n}$ such that $\phi_Y= \phi^{P}_{Y^P_0}\circ \phi^{P}_{Y^P}\circ\phi^{T}_{Y^T}\circ\phi^{S}_{Y^S}$ ($\phi^{P}_{Y^P_0}$ stands for an invertible linear reparametrization) uniquely transforms $v$ into the infinite-level parametric normal form
\begin{eqnarray}\label{inftyextend}
v^{(\infty)}&=& Y_{10}+ a^{(\infty)}_{(N_0+1)N_0\mathbf{0}}X_{(N_0+1)N_0}+ \sum^\infty_{|\m|=0, \mathbf{n}\in \N_0^{N_0}} a^{(\infty)}_{(2N_0+1)2N_0\mathbf{n}} X_{(2N_0+1)2N_0}\mu^{\mathbf{n}}\\\nonumber
&+& \sum^{N_0}_{k=1} X_{k(k-1)}\mu_{\sigma(k)} \qquad\qquad\qquad\qquad\qquad(\hbox{for some } \sigma\in S_{N_0}).
\end{eqnarray}
\end{cor}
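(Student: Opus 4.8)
The plan is to assemble the corollary directly from the machinery already in place, treating it essentially as a restatement of Lemma \ref{notmodifiedlem1} at the level of vector fields rather than of the spectral sequence $E^{*,*}_\infty$. First I would invoke Theorem \ref{generalthm}: for $v=v^{(0)}$ given by \eqref{originalvect} there is a sequence of near-identity maps $\Phi_n$, associated with $Y_{(n)}$ converging in filtration topology to $Y^{(\infty)}=(Y^P,Y^T,Y^S)\in A^{0,*}$, such that $v^{(n)}\to v^{(\infty)}=\sum_r v^{(\infty)}_r$ with $v^{(\infty)}_r\in\NC^{(r)}_r$ and $v^{(\infty)}=\phi^T_{Y^T*}\circ\phi^P_{Y^P*}\circ\phi^S_{Y^S*}(v)$. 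Then I would feed in the specific hypotheses of the corollary — parametric dimension $N_0$, parameter genericity, $m=N_0$, and the grading weight $\alpha=2N_0+1$ as fixed in Lemma \ref{notmodifiedlem1} — to identify the spaces $\NC^{(r)}_r$ with the $E^{-r,r+1}_\infty$ computed there. The collapse statement $E^{*,*}_\infty=E^{*,*}_{2N_0+1}$ from Lemma \ref{notmodifiedlem1} gives $\NC^{(r)}_r=\NC^{(2N_0+1)}_r$ for all $r$, so the components of $v^{(\infty)}$ are governed by \eqref{componenttotalinfty}–\eqref{componenttotalinfty1}.

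Next I would read off the explicit form \eqref{inftyextend} term by term from those component descriptions. The grade-$0$ part is $Y_{10}$ (the unchanged linear part, since all transformations are near-identity). For $1\le n\le 2N_0$, $\NC^{(2N_0+1)}_n=\NC^{(2N_0)}_n=\Span\{X_{(k+1)k}\mid n=2k\}$; parametric dimension $N_0$ together with the characterization right before Lemma \ref{notmodifiedlem1} ($\pi_{\f X_{(i+1)i}}\widetilde v^{(r)}=0$ for $i<N_0$ and $\neq 0$ for $i=N_0$) kills every such term except the one at $n=2N_0$, leaving the single coefficient $a^{(\infty)}_{(N_0+1)N_0\mathbf 0}X_{(N_0+1)N_0}$. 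For $2N_0<n<4N_0$ the space is $\{\mathbf 0\}$, and for $n\ge 4N_0$ it is $\Span\{X_{(2N_0+1)2N_0}\mu^{\mathbf n}\mid |\mathbf n|=r,\ n=4N_0+r\}$, which produces the infinite sum $\sum a^{(\infty)}_{(2N_0+1)2N_0\mathbf n}X_{(2N_0+1)2N_0}\mu^{\mathbf n}$. The sum $\sum_{k=1}^{N_0}X_{k(k-1)}\mu_{\sigma(k)}$ with the permutation $\sigma\in S_{N_0}$ comes from the portion of $\NC^{(2N_0)}_n$ in the range $2N_0<n\le 4N_0+1$ that survives into the $2N_0$th level conormal/normal form description in Lemma \ref{notmodifiedlem1}; I would note that these linear-in-$\mu$ terms are exactly the ones with unit coefficient in that lemma, whence they appear with coefficient $1$ here.

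Finally I would address the transformation structure: the corollary asserts $\phi_Y=\phi^P_{Y^P_0}\circ\phi^P_{Y^P}\circ\phi^T_{Y^T}\circ\phi^S_{Y^S}$ with an extra invertible \emph{linear} reparametrization $\phi^P_{Y^P_0}$ in front. Theorem \ref{generalthm} already supplies the near-identity factors $\phi^P_{Y^P}\circ\phi^T_{Y^T}\circ\phi^S_{Y^S}$ (up to the harmless reordering justified by Lemma \ref{3transf}, which says these three one-parameter-type groups commute up to the semidirect structure). The genuinely extra ingredient is $\phi^P_{Y^P_0}$: since $v$ is parameter generic, $\rank_\f A^{(1)}=N_0$, so the linear map $D_\mu\widetilde v^{(1)}(\mu=\mathbf 0)$ on $W=\Span\{X_{(k+1)k}\}_{k=0}^{N_0-1}$ is invertible; composing with its inverse (extended to an invertible linear change of parameters $\mu\mapsto A^{(1)-1}\mu$, or with the appropriate permutation built in) normalizes the linear parameter dependence so that the surviving linear-in-$\mu$ terms are precisely $\sum_{k=1}^{N_0}X_{k(k-1)}\mu_{\sigma(k)}$. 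I expect the main obstacle to be the bookkeeping around $\sigma$ and the normalization of coefficients to $1$: one must check that $\phi^P_{Y^P_0}$ can be chosen as a genuine invertible \emph{linear} reparametrization (not merely near-identity) that simultaneously realizes the permutation $\sigma$ produced in Lemma \ref{notmodifiedlem1} and scales the $X_{k(k-1)}\mu_{\sigma(k)}$ coefficients to $1$, while leaving the already-normalized higher-grade terms in their complement spaces $\NC^{(2N_0+1)}_n$ — this requires that the linear reparametrization preserve the filtration and the formal-basis costyle, which follows from the propositions on $\phi^P_{\F^\alpha\PM*}\le\mathrm{Aut}_\f(\LST)$ and the grading conventions, but needs to be stated carefully. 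Everything else is a direct transcription of Lemma \ref{notmodifiedlem1} and Theorem \ref{generalthm}.
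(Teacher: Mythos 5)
Your proposal is correct and follows exactly the route the paper takes: its proof of this corollary is simply the one-line observation that the result follows from Lemma \ref{notmodifiedlem1} and Theorem \ref{generalthm}, which are precisely the two results you invoke, with your additional detail on reading off the components $\NC^{(2N_0+1)}_n$ and on using parameter genericity (invertibility of $A^{(1)}$) to justify the linear reparametrization $\phi^{P}_{Y^P_0}$ being a faithful expansion of what the paper leaves implicit.
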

\begin{proof} The proof is straightforward following Lemma \ref{notmodifiedlem1} and Theorem \ref{generalthm}.
\end{proof}

The key property of the spectral sequence in the context of normal form theory is that transformation spaces in each new level (cohomology of previous page) coincide with the kernel of the maps associated with the previous level. This is the main reason that eliminating new terms in level $r$ will not lead to recreating the already eliminated terms in previous levels. This idea of using the kernel of the maps is one of the most common approach in the unique normal form theory. However, it is evident that not only kernel terms are useful but also some terms under which the degenerate spaces are kept invariant can be used. This, of course, may not lead to further simplification of the system (since it, instead, makes a compromise on the degenerate spaces) but it leaves us with more freedom in the choice of parametric normal form which can be very important in applications. An alternative parametric normal form ({\em i.e,} equation (\ref{stab}) compared) to the equation (\ref{inftyextend}) may evidently be more suitable for bifurcation and stability analysis.

\section{A distorted spectral sequence and invariant degenerate spaces}\label{alter}

In this section we modify the method of spectral sequence to a similar approach based on the notion of degenerate invariant spaces in order to obtain our desired parametric normal form. Since the main property of the spectral sequence fails in this section, the sequence of spaces $\widehat{E}^{m,n}_r$ is not called the spectral sequence; $\widehat{E}^{m,n}_r$ is not necessarily the cohomology of $\widehat{E}^{m,n}_{r-1}.$

Any degenerate condition puts some more restrictions on each level of normal forms than what complement spaces does, {\em e.g.,} the homogenous terms of $\widetilde{v}^{(r)}$ can not freely take any vector in the $r$th level complement spaces. Mathematically, this lack of freedom can be (fully or partially) translated in terms of invariant degenerate spaces $\mathfrak{D}^{(p)}$ (\ie $\mathfrak{D}^{(p)}\subseteq \mathfrak{D}^{(q)}, p\geq q\geq r,$ for some $r\in \NZ)$ when for a sufficiently large natural number $r,$ $\widetilde{v}^{(p)}=\sum^\infty_{n=0}\widetilde{v}^{(p)}_n \in \mathfrak{D}^{(p)}= {\oplus}^\infty_{n=0}\mathfrak{D}_n^{(p)}\subseteq {\oplus}^\infty_{n=0}\NC_n^{(p)}$ $\forall p\geq r.$ The degenerate conditions (imposed by the codimension of the system) are usually distinguished when the normal form of a given system is computed up to high enough levels. For instance, the first (and higher) level extended partial (parametric) normal form of a system associated with codimension $N_0$ generalized Hopf singularity (in polar coordinates) does not have amplitude term of grade {\em less} than $2N_0+1$ ({\em i.e.,} $\rho^{i}$ for $i<2N_0+1);$ this is because of its codimension, see Lemma \ref{firstlevel}. The first level complement spaces of such system, however, have amplitude terms of all odd orders. So, we assume that the first level degenerate spaces associated with grades less than $2N_0+1$ do not have amplitude terms.
Generally we may define the $r$th level degenerate space associated with a set of conditions, {\em i.e.,} degenerate conditions, (when the style, grading structures, and approach are already fixed) by the vector space span $\mathfrak{D}^{(r)}$ of all $r$th level extended partial parametric normal forms of all systems satisfying degenerate conditions. According to the grading structure, we have $\mathfrak{D}^{(r)}\subseteq {\oplus}^\infty_{n=0} \pi_{\LS_n}\mathfrak{D}^{(r)}= {\oplus}^\infty_{n=0} \mathfrak{D}_n^{(r)},$ where $\pi_{\LS_n}\mathfrak{D}^{(r)}= \mathfrak{D}_n^{(r)}.$ Thus, we further assume $\mathfrak{D}^{(r)}= {\oplus}^\infty_{n=0} \mathfrak{D}_n^{(r)}$ is satisfied and call $\mathfrak{D}_n^{(r)}$ by the $r$th level degenerate space of grade $n.$

The computation of normal forms does not necessarily require (indeed, not a feasible approach in practical computations as Murdock \cite{Mord04} stated) the evaluation of $r$th level extended partial normal forms. Indeed, the proper approach is to evaluate the spaces $E^{n, -n+1}_r$ ($\forall r, 1\leq r\leq n$) and $E^{n, -n}_r$ $(\forall r, r\leq n-1).$ Then, by Lemma \ref{decom}, it is easy to prove that there exist $Y_r$ $(\forall r, r\leq n-1)$ such that $\phi_{Y_{n-1}}\cdots \phi_{Y_1} \phi_{Y_0}$ transforms the system into a new system in which the terms of grade $n$ be an element of $\D_n^{(n)},$ while terms with grades less than $n$ remain unchanged. The later, however, is not necessarily true in this section, since the notion of invariant degenerate spaces is used. In fact, the terms of grades less than $n$ may be changed in the $n$th step, but yet they stay within the degenerate spaces and thus it does not hamper our computational process. Therefore, in the modified approach below we just need to satisfy some conditions and then, evaluate ${\rm Total}^{1} (\widehat{E}_{\infty}^{*,*})$ which determines the parametric normal forms. We denote the new spaces with $\widehat{E}_r^{*,*},$ $\widehat{E}^{n,-n+1}_r= \frac{\NC_n^{(r)}+ \ta_n^{(r)}}{\ta_n^{(r)}}= \NC_n^{(r)}$ and $\D_n^{(r)}\subseteq \NC_n^{(r)}.$

Let us denote $\R^{(0)}_n$ for a time space satisfying $E_0^{n, -n}= \frac{(\R^{(0)}_n+\F^{n+1}\R)\times \F^n\PM\times \F^n\LS}{\F^{n+1} A^{0,*}}$ and consider its decomposition as $\R^{(0)}_n=\widetilde{\R}_n^{(0)}\oplus \widehat{\R}_n^{(0)}$ such that $\widetilde{\R}_n^{(0)}\D_0^{(1)} \subseteq \D_n^{(n)}$ and $\widehat{\R}_n^{(0)}$ is the unique complement for $\widetilde{\R}_n^{(0)},$ where the formal basis costyle is used. Our idea is to preserve the time subspace $\widetilde{\R}_n^{(0)}$ for later use and only apply the space $$\widehat{E}_0^{n, -n}= \widehat{\R}_n^{(0)}\times \PM_n\times \LS_n + \F^{n+1} A^{0, *}/\F^{n+1} A^{0,*}$$ for the first level spaces. This provides us enough flexibility and freedom to obtain our desired normal form. Therefore, $$\widehat{E}_0^{n, -n+1}= E_0^{n, -n+1}\hbox{ and }\widehat{E}^{n,-n+1}_1= \frac{\F^{n}\LS}{\F^{n+1} \LS+ d_0(\widehat{\R}_n^{(0)}\times \PM_n\times \LS_n)}.$$
Now we intend to use the time space $\widetilde{\R}_n^{(0)}$ for higher level spaces, thus we add this space to our conormal form spaces (transformation spaces). Then, we assume
\begin{equation*}
(\widetilde{\R}_n^{(0)}, \0, \mathbf{0})+ \F^{n} A^{0, *}\cap d_0^{-1}\F^{n+1}\LS= \R_n^{(1)}\! \times \!{\PM_n}^{(1)}\!\times \!\LS_n^{(1)}\,(\hbox{mode } \F^{n+1} A^{0, *}\cap d_0^{-1}\F^{n+1}\LS).
\end{equation*} Thus, $\R_n^{(0)}\subseteq\R_n^{(1)}$ denotes the time space available to be used for the second level spaces. However, we may again wish to preserve a subspace $\widetilde{\R}_n^{(1)}\subseteq \R_n^{(1)}$ for our later use provided that $\widetilde{\R}_n^{(1)}\D_i^{(2)} \subseteq \D_{n+i}^{(n+i)} \hbox{ for }i=0, 1.$ Then, consider the unique time space decomposition $$\R_n^{(1)}= \widetilde{\R}_n^{(1)}\oplus \widehat{\R}_n^{(1)}$$ obtained via formal basis costyle. Thereby, we let
\begin{equation*}
\widehat{E}^{n,-n}_1= \frac{\widehat{\R}_{n}^{(1)}\times {\PM_n}^{(1)}\times \LS_n^{(1)}+ \F^{n+1} A^{0, *}\cap d^{-1}\F^{n+1}\LS}{\F^{n+1} A^{0, *}\cap d^{-1}_1\F^{n+1}\LS}
\end{equation*}
and $$\widehat{E}^{n,-n+1}_2= \frac{\F^{n}\LS}{\F^{n+1} \LS+\pi_{\F^n \LS}d_1(\widehat{\R}_{n-1}^{(1)}\times {\PM_{n-1}}^{(1)}\times \LS_{n-1}^{(1)})+ d_0(\widehat{\R}_n^{(0)}\times \PM_n\times \LS_n)}.$$ Inductively, for any natural number $r$ we add the time space $\widetilde{\R}_n^{(r-1)}$ to the transformation spaces and denote
\begin{equation*}
(\widetilde{\R}_n^{(r-1)}, \mathbf{0}, \mathbf{0})+ \F^{n} A^{0, *}\cap d_{r-1}^{-1}\F^{n+r}\LS= \R_n^{(r)}\times {\PM_n}^{(r)}\times \LS_n^{(r)}
\end{equation*} $(\hbox{mode }\F^{n+1} A^{0, *}\cap d_{r-1}^{-1}\F^{n+r}\LS).$ Then, we choose the unique formal basis costyle time space decomposition
$\R_n^{(r)}= \widetilde{\R}_n^{(r)}\oplus \widehat{\R}_n^{(r)}$ such that
\begin{equation}\label{modcondition}
\widetilde{\R}_n^{(r)} \D_k^{(k)}\subseteq \D_{n+k}^{(n+k)}\quad \forall k, k<r,
\end{equation} with the intention of preserving the space $\widetilde{\R}_n^{(r)}$ for computing higher level spaces. Thus,
$$\widehat{E}^{n,-n}_r= \frac{\widehat{\R}_n^{(r)}\times {\PM_n}^{(r)}\times \LS_n^{(r)}+ \F^{n+1} A^{0, *}\cap d_{r-1}^{-1}\F^{n+r}\LS}{\F^{n+1} A^{0, *}\cap d_{r-1}^{-1}\F^{n+r}\LS}$$ and
\begin{equation*}
\widehat{E}^{n,-n+1}_{r+1}= \frac{\F^{n}\LS}{\pi_{\LS_n}\sum^r_{k=0}d_{k}(\widehat{\R}_{n-k}^{(k)}\times {\PM_{n-k}}^{(k)}\times \LS_{n-k}^{(k)})+ \F^{n+1} \LS}.
\end{equation*}

We apply the above approach to parametric generic $N_0$-codimension generalized Hopf singularity to obtain an alternative parametric normal form. The new normal form is more suitable for its applications in perturbation and bifurcation analysis. Define
\begin{equation}\label{sevom}
\D_{N}^{(N)}= \left\{
\begin{array}{ll}
\Span\{Y_{10}\}& \hbox{when } \quad N=0,\\[1.0ex]
{\rm span}_{\mathbb{F}}\{X_{(N_0+1)N_0}, Y_{(N_0+1)N_0}\} & \hbox{when  }\quad N=2N_0, \\[1.0ex]
{\rm span}_{\mathbb{F}}\{Y_{(N_0+1)N_0}\mu^{\mathbf{n}}\} & \hbox{when }\quad N= 2N_0+ 2rN_0+r,|\m|=r\neq 0,\\[1.0ex]
{\rm span}_{\mathbb{F}}\{X_{i(i-1)} \mu_{k}\} & \hbox{when  }\quad N= 2i+ 2N_0-1, i\leq N_0, k\leq m, \\[1.0ex]\{\mathbf{0}\}& \hbox{otherwise,}
\end{array}\right.
\end{equation} and
\begin{equation}\label{Rdecom}
\widetilde{\R}_{n}^{(i)}= \left\{
\begin{array}{ll}
\Span \{Z_{N_0}\mu^{\mathbf{n}}|n= N_0+ r\alpha, \mathbf{n}\in \NZ^m\} & \hbox{when  }\quad i<2N_0, \\[1.0ex]
\{0\}& \hbox{otherwise.}
\end{array}\right.
\end{equation} It is easy to check that $\widetilde{\R}_{n}^{(i)}$ and $\D_{N}^{(N)}$ satisfy the condition given in Equation (\ref{modcondition}). In the following the above method is implemented to compute ${\rm Total}^{1} (\widehat{E}_{\infty}^{*,*})= {\oplus}^\infty_{n=0} \NC_n^{(n)}$ $(\D_n^{(n)}\subseteq \NC_n^{(n)}).$ Clearly, Equation (\ref{Rdecom}) results in $$\R_n=\widehat{\R}_n^{(0)}, \widetilde{\R}_{n}^{(0)}=\{0\} \quad(\forall n< 2N_0), $$ and for any $i<2N_0$ we have $\widehat{\R}_{n}^{(i)}= \{0\}$ and $$\widetilde{\R}_{n}^{(i)}= \Span \{Z_{N_0}\mu^{\mathbf{n}}|n= 2N_0+ r\alpha, \mathbf{n}\in \NZ^m\}.$$ Furthermore,  $\widehat{\R}_{n}^{(2N_0)}= \Span \{Z_{N_0}\mu^{\mathbf{n}}|n= 2N_0+ r\alpha, \mathbf{n}\in \NZ^m\}.$

\begin{lem}\label{firstmodif}
Let $v,$ given by equation (\ref{originalvect}), be a parametric generic Hopf singularity system of parametric dimension $N_0,$ $\alpha= 2N_0+1,$ and $\widetilde{\R}_{n}^{(i)}$ satisfy the equation (\ref{Rdecom}). Then, we have $\widehat{E}^{n,-n+1}_1= \frac{\NC_n^{(1)}+ \ta_n^{(1)}}{\ta_n^{(1)}}= \NC_n^{(1)},$ where
\begin{eqnarray*}
\NC_n^{(1)}&=& \Span \{X_{(N_0+1)N_0}\mu^{\mathbf{n}}, Y_{(N_0+1)N_0}\mu^{\mathbf{n}}| n= 2N_0+ r\alpha, \mr\in \NZ^m\}\\
&&+\, \Span\{X_{(k+1)k}\mu^{\mathbf{n}}| n= 2k+ r\alpha, k\neq N_0, \mr\in \NZ^m\}.
\end{eqnarray*}
Furthermore, $\widehat{E}^{n,-n+1}_1= \widehat{E}^{n,-n+1}_r$ $\forall r<2N_0.$
\end{lem}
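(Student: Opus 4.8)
The plan is to compute $\widehat{E}^{n,-n+1}_1$ directly from its definition $\widehat{E}^{n,-n+1}_1=\F^{n}\LS\big/\big(\F^{n+1}\LS+d_0(\widehat{\R}_n^{(0)}\times\PM_n\times\LS_n)\big)$ and to observe that, exactly as in the proof of Lemma \ref{firstlevel}, the level-zero differential feels only the linear part $v^{(0)}_0=Y_{10}$; so the whole calculation is the one of Lemma \ref{firstlevel} with the time space $\R_n$ replaced by its truncation $\widehat{\R}_n^{(0)}$. First I would record that $D_\mu Y_{10}=\0$ and that $D_\mu$ adds grades, so the parameter generators drop out modulo $\F^{n+1}\LS$ and $d_0(\widehat{\R}_n^{(0)}\times\PM_n\times\LS_n)\equiv\widehat{\R}_n^{(0)}Y_{10}+\ad_{Y_{10}}\LS_n\pmod{\F^{n+1}\LS}$.

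Next I would invoke the two structural facts already used in Lemma \ref{firstlevel}: the operator $\ad_{Y_{10}}|_{\LS_n}$ is semisimple with image $\LS_{H^c, n}$, and, by the product rule $Z_i\mu^{\mathbf{n}}Y_{10}=Y_{(i+1)i}\mu^{\mathbf{n}}$, the subspace $\R_n Y_{10}=\Span\{Y_{(k+1)k}\mu^{\mathbf{n}}\mid 2k+|\mathbf{n}|\alpha=n\}$ is the ``$Y$-part'' of $\LS_{H, n}$. Then I would read $\widehat{\R}_n^{(0)}$ off (\ref{Rdecom}) and split into cases. For $n<2N_0$ nothing is reserved, so $\widehat{\R}_n^{(0)}=\R_n$ and $\ta_n^{(1)}=\LS_{H^c, n}+\R_nY_{10}+\F^{n+1}\LS$ has complement $\Span\{X_{(k+1)k}\mu^{\mathbf{n}}\mid 2k+|\mathbf{n}|\alpha=n\}$; since $\alpha=2N_0+1>2N_0$ forces $|\mathbf{n}|=0$ and $k<N_0$ at these grades, this is precisely the $k\neq N_0$ part of the asserted $\NC_n^{(1)}$. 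For $n=2N_0+r\alpha$ the family $\{Z_{N_0}\mu^{\mathbf{n}}\mid|\mathbf{n}|=r\}$ is withheld; since among the grade-$n$ time monomials $Z_i\mu^{\mathbf{n}}$ those with $i=N_0$ are exactly the ones with $|\mathbf{n}|=r$, the image $\widehat{\R}_n^{(0)}Y_{10}$ loses precisely the phase generators $Y_{(N_0+1)N_0}\mu^{\mathbf{n}}$ with $|\mathbf{n}|=r$. Hence $\ta_n^{(1)}$ still absorbs $\LS_{H^c, n}$ and every $Y_{(k+1)k}\mu^{\mathbf{n}}$ with $k\neq N_0$, and its complement is $\Span\{X_{(k+1)k}\mu^{\mathbf{n}}\}\oplus\Span\{Y_{(N_0+1)N_0}\mu^{\mathbf{n}}\mid|\mathbf{n}|=r\}$, the claimed $\NC_n^{(1)}$; at grades $n$ not of the form $2k+r\alpha$ one has $\LS_{H, n}=\{0\}$, hence $\NC_n^{(1)}=\{0\}$. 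The formal basis style selects the representative complement, though here the splitting is already forced.

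For the stabilisation statement I would use the recursion $\widehat{E}^{n,-n+1}_{r+1}=\F^{n}\LS\big/\big(\pi_{\LS_n}\sum_{k=0}^{r}d_k(\widehat{\R}_{n-k}^{(k)}\times\PM_{n-k}^{(k)}\times\LS_{n-k}^{(k)})+\F^{n+1}\LS\big)$ and show that for $1\le k<2N_0$ the $k$-th summand already lies in $\ta_n^{(1)}$ modulo $\F^{n+1}\LS$. The decisive point is the hypothesis that $v$ has parametric dimension $N_0$: its defining property $\pi_{\f X_{(i+1)i}}\widetilde{v}^{(r)}=0$ for all $i<N_0$, together with the already-computed $\NC_k^{(1)}=\Span\{X_{(k/2+1)(k/2)}\}$ (or $\{0\}$) for $k<2N_0$, forces $\widetilde{v}^{(1)}_k=\0$ for $1\le k<2N_0$. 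Since $d_k$ on a grade-$(n-k)$ transformation generator is computed, at grade $n$, by pairing that generator with $\widetilde{v}^{(1)}_k$ in the time and $\ad$ slots and with $D_\mu\widetilde{v}^{(1)}_k$ in the parameter slot, each such contribution vanishes; moreover no new time generators are released at these levels, since $\widehat{\R}_m^{(k)}=\{0\}$ for $0<k<2N_0$ by (\ref{Rdecom}). Hence the denominator does not grow before level $2N_0$, that is, $\widehat{E}^{n,-n+1}_1=\widehat{E}^{n,-n+1}_r$ for all $r<2N_0$. The product-formula bookkeeping and the appeal to semisimplicity of $\ad_{Y_{10}}$ are routine; the step I expect to be the genuine obstacle is this last one, namely unwinding precisely what the higher differentials $d_k$ do on the $(n,-n+1)$ entry in the \emph{distorted} setting and verifying they contribute nothing — this really hinges on combining the reservation scheme (\ref{Rdecom}) with the vanishing of the low-grade homogeneous parts of $\widetilde{v}^{(1)}$ furnished by the parametric-dimension hypothesis, and on noting that the parameter direction never feeds this entry because $D_\mu$ annihilates $Y_{10}$ and all the vanishing $\widetilde{v}^{(1)}_k$.
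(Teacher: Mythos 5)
Your proposal is correct and follows essentially the same route as the paper: the paper's own proof is a two-line sketch that (i) notes the parametric-dimension hypothesis forces $\D_n^{(1)}=\{\0\}$ for $n<2N_0$, whence the entries stabilise below level $2N_0$, and (ii) declares the level-one computation ``straightforward,'' i.e.\ the calculation of Lemma \ref{firstlevel} with $\R_n$ replaced by $\widehat{\R}_n^{(0)}$. You have simply filled in both halves explicitly — the withheld $Z_{N_0}\mu^{\mathbf{n}}$ generators leaving the $Y_{(N_0+1)N_0}\mu^{\mathbf{n}}$ terms in the complement, and the vanishing of $\widetilde{v}^{(1)}_k$ for $1\le k<2N_0$ killing the higher differentials — which is exactly what the paper's ``straightforward'' elides.
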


\begin{proof}
Since $v$ is a $N_0$-parametric dimension generalized Hopf singularity, $\D_n^{(1)}=\{\0\}$ $\forall n<2N_0,$ therefore, $\widehat{E}^{n,-n+1}_1= \widehat{E}^{n,-n+1}_r$ $\forall r<2N_0.$ The rest of the proof is straightforward.
\end{proof}

The following theorem presents the infinite level normal form space of an $N_0$-parametric dimension of parametric generic generalized Hopf singularity.
\begin{thm}\label{specmain}
Assume the hypothesis of Lemma \ref{firstmodif} holds. Then,
\begin{equation}\label{totalN0modif}
{\rm Total}^{1} (\widehat{E}_{2N_0}^{*,*})= {\oplus}^\infty_{n=0} \widehat{E}^{n,-n+1}_{2N_0}\cong {\oplus}^\infty_{n=0} \NC_n^{(2N_0)},
\end{equation} where
$\NC_{n}^{(2N_0)} = \{X_{(k+1)k}|n= 2k\}$ $(1\leq n< 2N_0),$ for any $n\geq 2N_0,$
\begin{eqnarray} \nonumber
\NC_{n}^{(2N_0)}&=&\Span\{\delta_{2N_0, n}X_{(N_0+1)N_0}, Y_{(N_0+1)N_0}\mu^{\mathbf{n}}| n= 2N_0+ r\alpha, \mathbf{n}\in \NZ^m\}\\
&&+\,
\Span \{X_{(k+1)k}\mu^{\mathbf{n}}| 0\leq k< N_0, n= 2k+r\alpha,\mathbf{n}\in \NZ^m\},\label{55}
\end{eqnarray} and the degenerate spaces $\D_{n}^{(2N_0)}= \D_{n}^{(n)}\subseteq \NC_{n}^{(2N_0)}$ for any $n\leq 2N_0.$ In particular $\D_{n}^{(2N_0)}=\{\0\}$ for any $0\neq n<2N_0$ and $\D_{2N_0}^{(2N_0)}=\NC_{2N_0}^{(2N_0)}.$ Furthermore, assume $v$ is parametric generic and $m=N_0$. Then, (by also using an invertible linear reparametrization on $v$) there exists a $\sigma \in S_{N_0}$ such that
\begin{equation}\label{totalinfty1}
{\rm Total}^{1} (\widehat{E}_{\infty}^{*,*})= {\rm Total}^{1} (\widehat{E}_{4N_0}^{*,*})\cong {\oplus}^\infty_{k=0} \NC_k^{(4N_0)},
\end{equation} where $\forall n\geq 2N_0,$
\begin{eqnarray*}
\NC_{n}^{(\infty)}&=&\Span\{\delta_{2N_0, n}X_{(N_0+1)N_0}, \delta_{2N_0+ r\alpha, n} Y_{(N_0+1)N_0}\mu^{\mathbf{n}}| r=|\mathbf{n}|, \mathbf{n}\in \NZ^{N_0}\}
\\
&&+\,
\Span \{X_{k(k-1)}\mu_{\sigma(i)}| 1\leq i\leq N_0, 0\leq k< N_0, n= 2k-2+\alpha\},
\end{eqnarray*} and $\D_{n}^{(\infty)}=\NC_{n}^{(\infty)}$ $\forall n\geq 2N_0.$
\end{thm}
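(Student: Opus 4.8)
The plan is to proceed by induction on the level $r$, paralleling the proof of Lemma \ref{notmodifiedlem1} but carrying along the bookkeeping of the preserved time subspaces $\widetilde{\R}_n^{(i)}$ and the invariant degenerate spaces $\D_n^{(n)}$. First I would establish the base case: Lemma \ref{firstmodif} already gives $\widehat{E}^{n,-n+1}_1 = \NC_n^{(1)}$ and the stabilization $\widehat{E}^{n,-n+1}_1 = \widehat{E}^{n,-n+1}_r$ for all $r<2N_0$, so the first genuine computation happens at level $2N_0$. For the step from level $r$ to $r+1$ (with $1\le r < 2N_0$), I would verify that the image $d_r(\widehat{\R}_{n-k}^{(k)} \times \PM_{n-k}^{(k)} \times \LS_{n-k}^{(k)})$ contributes nothing new modulo $\F^{n+1}\LS$ because, by the genericity hypothesis and the codimension condition, $\D_n^{(1)} = \{\0\}$ for $n < 2N_0$, forcing the amplitude terms to remain absent; this is exactly where the decomposition \eqref{Rdecom} (with $\widetilde{\R}_n^{(i)}$ consisting of the $Z_{N_0}\mu^{\mathbf{n}}$ terms, reserved until level $2N_0$) does its work, since those time generators are precisely the ones that would act on $Y_{10}$ to produce $X_{(N_0+1)N_0}\mu^{\mathbf{n}}$ and $Y_{(N_0+1)N_0}\mu^{\mathbf{n}}$ terms.

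Next I would carry out the level-$2N_0$ computation. At this level $\widehat{\R}_n^{(2N_0)} = \widetilde{\R}_n^{(2N_0-1)} = \Span\{Z_{N_0}\mu^{\mathbf{n}} \mid n = 2N_0 + r\alpha\}$ becomes available, and multiplying $Z_{N_0}\mu^{\mathbf{n}}$ against $Y_{10}$ via the product formulas $Z_i\mu^{\mathbf{n}_1} Y_{jk}\mu^{\mathbf{n}_2} = Y_{(i+j)(i+k)}\mu^{\mathbf{n}_1+\mathbf{n}_2}$ produces exactly the $Y_{(N_0+1)N_0}\mu^{\mathbf{n}}$ family. Combined with the state and reparametrization images (which, by the parameter-generic assumption and $\rank_\f A^{(1)} = N_0$, eliminate the lower-order amplitude terms $X_{(k+1)k}\mu^{\mathbf{n}}$ with $k < N_0$ down to the monomial markers $X_{k(k-1)}\mu_{\sigma(k)}$ after a linear reparametrization), this gives \eqref{totalN0modif} and \eqref{55}, with the Kronecker-type symbol $\delta_{2N_0,n}$ recording that the pure $X_{(N_0+1)N_0}$ term survives only in grade $2N_0$. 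I would then track the degenerate spaces: since all $\widetilde{v}^{(2N_0)}$ of parametric-generic $N_0$-codimension systems lie in the span described by \eqref{sevom}, we get $\D_n^{(2N_0)} = \D_n^{(n)} \subseteq \NC_n^{(2N_0)}$, with $\D_n^{(2N_0)} = \{\0\}$ for $0 \ne n < 2N_0$ and $\D_{2N_0}^{(2N_0)} = \NC_{2N_0}^{(2N_0)}$.

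For the passage from level $2N_0$ to the collapse at $4N_0$, I would iterate the same construction on the now-finite-dimensional obstruction spaces spanned by the markers $X_{k(k-1)}\mu_j$ ($0 \le k < N_0$, $1 \le j \le N_0$); the converging differential $d_r$ acts on these by reparametrization in a way governed by the matrix $A^{(1)}$ of Lemma \ref{notmodifiedlem1}, and full-rank genericity together with an invertible linear change of parameters pins down a permutation $\sigma \in S_{N_0}$ realizing $X_{k(k-1)}\mu_{\sigma(i)}$. After all these markers are normalized (which happens by grade $4N_0+1$, i.e. level $4N_0$), no further simplification is possible, so $\widehat{E}^{*,*}_r = \widehat{E}^{*,*}_{4N_0}$ for $r \ge 4N_0$, giving \eqref{totalinfty1} and the final description of $\NC_n^{(\infty)}$; the equality $\D_n^{(\infty)} = \NC_n^{(\infty)}$ for $n \ge 2N_0$ holds because the reserved time space is exhausted and the distorted construction leaves no residual freedom beyond the degenerate span. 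The strong convergence of $\widehat{E}^{*,*}_r$ to $\widehat{E}^{*,*}_\infty$ follows as in the earlier sections from the filtration $\F$ being Hausdorff and exhaustive (Cartan–Eilenberg, via \cite[Theorems 2.6, 3.2, 3.12]{McCleary}), and the existence of the transforming $Y = (Y^P, Y^T, Y^S)$ plus the linear reparametrization follows by assembling the level-by-level solutions exactly as in Theorem \ref{generalthm} and Lemma \ref{transgroup}.

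The main obstacle I anticipate is verifying the compatibility condition \eqref{modcondition}, $\widetilde{\R}_n^{(r)}\D_k^{(k)} \subseteq \D_{n+k}^{(n+k)}$, at \emph{every} intermediate level rather than just at the endpoints — one must confirm that reserving $Z_{N_0}\mu^{\mathbf{n}}$ for later use never drags a preserved-time generator into a position where it would produce a term outside the prescribed degenerate span of \eqref{sevom}. This requires a careful grade-by-grade check of the product formulas against the piecewise definition of $\D_N^{(N)}$, in particular ensuring that $Z_{N_0}$ times the $X_{i(i-1)}\mu_k$ markers lands inside the $\{\mathbf{0}\}$ or $Y_{(N_0+1)N_0}\mu^{\mathbf{n}}$ cases as grades dictate; a secondary subtlety is that the distorted spaces $\widehat{E}^{n,-n+1}_r$ are no longer cohomologies of the previous page, so one cannot invoke the standard spectral-sequence collapse machinery and must instead argue the collapse directly from exhaustion of the reserved transformation spaces.
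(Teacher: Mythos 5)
The paper never actually supplies a proof of Theorem \ref{specmain} (it states ``We omit the proof of the following lemma and Theorem \ref{specmain} for brevity''), so your attempt has to be judged on its own terms. Your scaffolding is right: base case from Lemma \ref{firstmodif}, stagnation of the $\widehat{E}$-pages for $r<2N_0$, release of the reserved time space at level $2N_0$, use of the rank of $A^{(1)}$ together with an invertible linear reparametrization to produce $\sigma\in S_{N_0}$, and a direct collapse argument at $4N_0$ in place of the standard spectral-sequence machinery (which, as you note, is unavailable because $\widehat{E}_r$ is not the cohomology of $\widehat{E}_{r-1}$). But there is a genuine gap at the crux of the level-$2N_0$ step.

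You write that at level $2N_0$ the released generators $Z_{N_0}\mu^{\mathbf{n}}$ are multiplied against $Y_{10}$ to ``produce exactly the $Y_{(N_0+1)N_0}\mu^{\mathbf{n}}$ family,'' and that this, combined with the state and reparametrization images, gives (\ref{totalN0modif}) and (\ref{55}). This inverts the mechanism. The product $Z_{N_0}\mu^{\mathbf{n}}\cdot Y_{10}=Y_{(N_0+1)N_0}\mu^{\mathbf{n}}$ is precisely the action that was \emph{withheld} for $r<2N_0$ so that these $Y$-terms survive into $\NC_n^{(2N_0)}$ and $\NC_n^{(\infty)}$ (they are the phase terms $B_{\mathbf{n}}\rho^{2N_0}\mu^{\mathbf{n}}$ of (\ref{stab})); if that product were now counted as the image of $d_{2N_0}$, those terms would be quotiented out, contradicting (\ref{55}). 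The operative part of $d_{2N_0}$ on $Z_{N_0}\mu^{\mathbf{n}}$ (grade $2N_0+r\alpha$) is instead its product with the nonzero grade-$2N_0$ term $a^{(2N_0)}_{(N_0+1)N_0}X_{(N_0+1)N_0}$, which lands in grade $4N_0+r\alpha$ as $a^{(2N_0)}_{(N_0+1)N_0}X_{(2N_0+1)2N_0}\mu^{\mathbf{n}}$ and therefore eliminates every $X_{(2N_0+1)2N_0}\mu^{\mathbf{n}}$; the lower-grade side effect merely moves the $Y_{(N_0+1)N_0}\mu^{\mathbf{n}}$ coefficient inside $\D_N^{(N)}$, which is exactly what condition (\ref{modcondition}) licenses. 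This exchange --- killing the amplitude terms $\rho^{4N_0+1}\mu^{\mathbf{n}}$ at the price of keeping phase terms --- is the whole point of the distortion (compare (\ref{inftyextend}) with (\ref{stab})), and without it you cannot conclude that $\NC_n^{(\infty)}$ contains no $X$-terms for $n\geq 4N_0$. Two smaller slips: you place the reduction of $\{X_{(k+1)k}\mu^{\mathbf{n}}\mid k<N_0\}$ to the markers $X_{k(k-1)}\mu_{\sigma(i)}$ at level $2N_0$, whereas (\ref{55}) still contains the full span of those monomials and the reduction via $D_\mu(v)Y^P$ and ${\rm rank}\,A^{(1)}=N_0$ happens only between levels $2N_0$ and $4N_0$; and the verification of (\ref{modcondition}) that you flag as the main obstacle is actually short, since $\widetilde{\R}_n^{(r)}\neq\{0\}$ only for $r<2N_0$, so only $k=0$ with $\D_0^{(0)}=\Span\{Y_{10}\}$ is involved and $Z_{N_0}\mu^{\mathbf{n}}Y_{10}$ lies in the second and third cases of (\ref{sevom}); the marker cases you worry about never enter the condition.
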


The next theorem follows Theorem \ref{specmain}.

\begin{thm}\label{prelema1} Let $v^{(0)}\in \LS,$ given by equation (\ref{originalvect}) and $m=N_0$, be a parametric generic system associated with generalized Hopf singularity of parametric dimension $N_0.$ Then, there exist a $\sigma \in S_{N_0},$ a parametric state solution $Y^P,$ a parametric time solution $Y^T$ and a parametric state solution $Y^S$ such that their associated near-identity maps (with an invertible linear reparametrization) transform
$v^{(0)}$ to an infinite level parametric normal form:
\begin{eqnarray*}\label{normalform}
v^{(\infty)}&=&Y_{10}+ \sum^{N_0-1}_{i=0}X_{(i+1)i}\mu_{\sigma(i+1)}+a^{(2N_0)}_{(N_0+1)N_0}X_{(N_0+1)N_0}\\\nonumber
&&+ \sum^\infty_{r=|\mathbf{n}|=0}\sum_{\mathbf{n}\in \mathbb{N}^{N_0}_0}b^{(2N_0+2rN_0+r)}_{(N_0+1)N_0\mathbf{n}}Y_{(N_0+1)N_0}\mu^{\mathbf{n}},
\end{eqnarray*}
where the coefficients are uniquely expressed in terms of the coefficients of $v^{(0)}.$
\end{thm}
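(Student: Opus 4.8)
The plan is to deduce Theorem~\ref{prelema1} from Theorem~\ref{specmain} by first producing concrete near-identity maps that realize the spaces computed there and then translating the span descriptions into an explicit list of monomials. Theorem~\ref{specmain} already gives ${\rm Total}^{1}(\widehat{E}_{\infty}^{*,*})\cong\oplus_k\NC_k^{(4N_0)}$ together with the invariant degenerate spaces $\D_n^{(n)}$ (and $\D_n^{(\infty)}=\NC_n^{(\infty)}$ for $n\geq 2N_0$); what is missing is the analogue, in the distorted setting of section~\ref{alter}, of Theorem~\ref{generalthm} --- a convergent sequence of near-identity transformations carrying $v^{(0)}$ into $\oplus_n\D_n^{(n)}$.

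First I would establish that distorted analogue. Proceeding grade by grade, for each $n$ one applies Lemma~\ref{decom} to $\widehat{E}_0^{n,-n+1}$ and then to the successive pages $\widehat{E}_r^{n,-n+1}$, obtaining transformations $Y_r$ in $\widehat{\R}_{n-r}^{(r)}\times{\PM_{n-r}}^{(r)}\times\LS_{n-r}^{(r)}$ whose composed near-identity maps $\phi^P\circ\phi^T\circ\phi^S$ move the grade-$n$ part of the running system into $\D_n^{(n)}$; condition~(\ref{modcondition}) is exactly what guarantees that the grade-$<n$ part, although it may be modified when the reserved time subspaces $\widetilde{\R}_k^{(r)}$ are later deployed, stays inside $\oplus_{k<n}\D_k^{(k)}$ and is therefore never recreated. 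Convergence of the composite transformations in the filtration topology, and hence of the iterates $v^{(n)}$ to an infinite level normal form $v^{(\infty)}=\sum_n v^{(\infty)}_n$ with $v^{(\infty)}_n\in\D_n^{(n)}$, then follows precisely as in Lemmas~\ref{r1} and~\ref{risr} from Lemma~\ref{transgroup} and Corollary~\ref{minusone}; the distortion affects neither those group-theoretic facts nor the local finiteness that makes the limits exist.

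It then remains to substitute the explicit data. By~(\ref{sevom}) the only nonzero $\D_N^{(N)}$ are $\Span\{Y_{10}\}$ at $N=0$, the spans $\Span\{X_{i(i-1)}\mu_k\}$ at $N=2i+2N_0-1$ with $i\leq N_0$, $\Span\{X_{(N_0+1)N_0},Y_{(N_0+1)N_0}\}$ at $N=2N_0$, and $\Span\{Y_{(N_0+1)N_0}\mu^{\mathbf{n}}\}$ at $N=2N_0+2rN_0+r$ with $|\mathbf{n}|=r\neq 0$; hence $v^{(\infty)}$ is a sum of $Y_{10}$, a block of terms $X_{i(i-1)}\mu_\bullet$, the single term $a^{(2N_0)}_{(N_0+1)N_0}X_{(N_0+1)N_0}$, and the family $Y_{(N_0+1)N_0}\mu^{\mathbf{n}}$. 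Since $v$ is parametric generic, $\rank A^{(1)}=N_0$ by~(\ref{matA}), so composing with the invertible linear reparametrization $\phi^P_{Y^P_0}$ diagonalizes the $X_{i(i-1)}\mu_\bullet$ block into $\sum_{i=0}^{N_0-1}X_{(i+1)i}\mu_{\sigma(i+1)}$ for a suitable $\sigma\in S_{N_0}$, reproducing the space~(\ref{totalinfty1}) of Theorem~\ref{specmain} and hence exactly the displayed $v^{(\infty)}$. Uniqueness of each coefficient in terms of those of $v^{(0)}$ is automatic, because every choice in the argument --- each complement space via formal basis style, each transformation via formal basis costyle, and the linear reparametrization via the rank-$N_0$ normalization --- is unique.

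The main obstacle is the verification in the second step: one must check that the distortion is self-consistent, i.e. that (\ref{modcondition}) together with the specific choice~(\ref{Rdecom}) (whose compatibility with (\ref{modcondition}) is asserted just before Lemma~\ref{firstmodif}) genuinely forces every $v^{(p)}$ with $p\geq n$ to have its grade-$n$ component in $\D_n^{(n)}$ rather than merely in $\NC_n^{(n)}$, so that the $\widehat{E}_r$-bookkeeping of section~\ref{alter} actually computes the normal form. A lesser point needing care is that the rank condition on $A^{(1)}$ survives all the preceding near-identity steps, so that $\sigma$ is well defined; this is handled by the parameter-genericity hypothesis as in Lemma~\ref{notmodifiedlem1}.
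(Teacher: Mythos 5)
Your proposal is correct and takes essentially the same route as the paper, whose entire proof of Theorem \ref{prelema1} is the remark that it ``follows Theorem \ref{specmain}.'' The extra material you supply --- the distorted analogue of Theorem \ref{generalthm} showing that the $\widehat{E}_r$ bookkeeping is actually realized by convergent near-identity maps with lower-grade terms confined to the degenerate spaces via condition (\ref{modcondition}), and the use of parameter genericity of $A^{(1)}$ to reduce the linear-in-$\mu$ block to $\sum_{i}X_{(i+1)i}\mu_{\sigma(i+1)}$ --- is precisely what the paper leaves implicit, and your reading of (\ref{sevom}) against the grading $\delta$ correctly reproduces the displayed monomials.
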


The following corollary provides a different parametric normal form for generalized Hopf singularity from the ones in \cite[Theorem 3]{PYuChen} and \cite{GazorYu,yl} but identical with the parametric normal form presented in \cite{Gazor,GazorYuGen}. This, of course, is a consistent alternative form with those of \cite{GazorYu,PYuChen,yl}.
\begin{cor}\label{polarhopf}
Let the parametric generic Hopf singularity system
\begin{equation}\label{hopf1}
\left(\begin{array}{c}
\frac{d{x}}{dt}\\
\frac{d{y}}{dt}\\
\end{array}\right) = \left(
\begin{array}{c}y\\-x\\\end{array}\right)
+\sum_{i+j+|\m|=2, i+j\geq 1}^\infty\left(\begin{array}{c}
\alpha_{ij\mathbf{n}}\\
\beta_{ij\mathbf{n}}\\
\end{array}
\right)x^iy^j\mu^{\mathbf{n}},
\end{equation} where $\mu\in \mathbb{R}^{N_0},$ have a parametric dimension of $N_0.$ Then, there exist a $\sigma \in S_{N_0},$ a sequence of near-identity change of state variables, time rescaling and reparametrization maps (near identity as well as an invertible linear reparametrization) such that system (\ref{hopf1}) can be transformed to an infinite level parametric normal form (given in polar coordinates):
\begin{eqnarray}\label{stab}
\frac{d{\rho}}{dt}
&=& \rho \left[ A
\rho^{2N_0}+ \sum^{N_0}_{i=1}\rho^{2i-2}
\mu_{\sigma(i+1)} \right],\\\nonumber
\frac{d{\theta}}{dt}&=& 1+\rho^{2N_0}\sum^\infty_{|\m|=0}\sum_{\mathbf{n}\in \mathbb{N}^{N_0}_0}B_{
\mathbf{n}}
\mu^{\mathbf{n}},
\end{eqnarray}
where the coefficients $A
$ and $B_{
\mathbf{n}}$ are uniquely determined in terms of $\alpha_{ij\mathbf{n}}$ and $\beta_{ij\mathbf{n}}.$
\end{cor}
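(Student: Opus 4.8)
The plan is to deduce Corollary~\ref{polarhopf} from Theorem~\ref{prelema1} by translating the complex $(z,w)$-normal form on the reality subspace back into real polar coordinates. First I would recall from Section~\ref{sec2} that the system (\ref{hopf1}) with $\mu\in\mathbb{R}^{N_0}$ is, after introducing $z=y+ix$, $w=\overline z$, exactly a vector field of the form (\ref{OR4}) lying in the parametric state space $\LST$ with $m=N_0$; since by hypothesis the system is parametric generic of parametric dimension $N_0$, all the hypotheses of Theorem~\ref{prelema1} are met. Applying that theorem yields a $\sigma\in S_{N_0}$ and near-identity parametric state, time and reparametrization maps (together with an invertible linear reparametrization) carrying $v^{(0)}$ to
\begin{equation*}
v^{(\infty)}=Y_{10}+\sum_{i=0}^{N_0-1}X_{(i+1)i}\mu_{\sigma(i+1)}+a^{(2N_0)}_{(N_0+1)N_0}X_{(N_0+1)N_0}+\sum_{r=|\mathbf n|=0}^{\infty}\sum_{\mathbf n\in\mathbb N_0^{N_0}}b^{(2N_0+2rN_0+r)}_{(N_0+1)N_0\mathbf n}Y_{(N_0+1)N_0}\mu^{\mathbf n}.
\end{equation*}

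The second step is the coordinate change itself. Writing $z=\rho e^{i\theta}$, $w=\overline z=\rho e^{-i\theta}$, one has $z^jw^k=\rho^{j+k}e^{i(j-k)\theta}$, so the generator $X_{jk}$ contributes a term proportional to $\rho^{j+k-1}$ in the $\dot\rho$ equation (an amplitude term) while $Y_{jk}=iX_{jk}$ contributes a term proportional to $\rho^{j+k}$ in the $\dot\theta$ equation (a phase term); in particular $Y_{10}$ produces the leading $\dot\theta=1$. Concretely, $X_{(i+1)i}$ gives $\rho^{2i}$ in $\dot\rho$, hence $X_{(i+1)i}\mu_{\sigma(i+1)}$ contributes $\rho^{2i}\mu_{\sigma(i+1)}$; summing over $i=0,\dots,N_0-1$ and factoring one $\rho$ gives the bracketed sum in the $\dot\rho$ equation of (\ref{stab}), and $a^{(2N_0)}_{(N_0+1)N_0}X_{(N_0+1)N_0}$ gives the $A\rho^{2N_0}$ term with $A:=a^{(2N_0)}_{(N_0+1)N_0}$. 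Likewise each $Y_{(N_0+1)N_0}\mu^{\mathbf n}$ yields $\rho^{2N_0}\mu^{\mathbf n}$ in $\dot\theta$, so collecting these and setting $B_{\mathbf n}:=b^{(2N_0+2rN_0+r)}_{(N_0+1)N_0\mathbf n}$ reproduces the $\dot\theta$ equation. Uniqueness of the $A$, $B_{\mathbf n}$ in terms of $\alpha_{ij\mathbf n}$, $\beta_{ij\mathbf n}$ is inherited verbatim from the uniqueness statement in Theorem~\ref{prelema1} together with the invertible linear change of variables $z=y+ix$.

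The two technical points to watch are bookkeeping rather than genuine obstacles. The first is that the passage to polar coordinates is only a diffeomorphism away from $\rho=0$, so one must phrase the conclusion as an identity of formal series in $\rho$ and $\mu$ (or note that the origin is a removable artefact, as is standard for Hopf normal forms); this is the sense already used throughout the paper via the filtration topology. The second is the index shift in $\sigma$: Theorem~\ref{prelema1} writes $\mu_{\sigma(i+1)}$ for $i=0,\dots,N_0-1$ while (\ref{stab}) writes $\mu_{\sigma(i+1)}$ for $i=1,\dots,N_0$; these are the same family of terms under the evident relabelling, and I would simply remark that the $\sigma$ produced here is (a relabelling of) the one in Theorem~\ref{prelema1}. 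I expect the main—though still routine—step to be verifying that the real and imaginary parts pair up correctly under the reality condition $w=\overline z$ so that the vector field on the reality subspace really does have the real polar form (\ref{stab}) with real coefficients; this is exactly the computation underpinning Remark~\ref{order} and Corollary~\ref{polarhopf}'s role as the motivation for putting $Y$ before $X$ in the formal basis, and it follows from the identities $X_{jk}=\bigl(\begin{smallmatrix}z^jw^k\\ w^jz^k\end{smallmatrix}\bigr)$, $Y_{jk}=iX_{jk}$ restricted to $w=\overline z$ together with $z=\rho e^{i\theta}$.
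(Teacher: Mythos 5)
Your proposal is correct and follows essentially the same route as the paper, which simply invokes Theorem \ref{prelema1} and the polar-coordinate conversion carried out in the cited earlier work (\cite[Corollary 4.3]{GazorYu}); your explicit computation that $X_{(k+1)k}$ terms become $\rho^{2k+1}$ amplitude terms and $Y_{(k+1)k}$ terms become $\rho^{2k}$ phase terms on the reality subspace $w=\overline z$ is exactly the content being deferred there. Your observation about the index mismatch in $\sigma(i+1)$ between Theorem \ref{prelema1} and equation (\ref{stab}) is a fair catch of a relabelling (indeed a typographical) issue, not a mathematical gap.
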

\begin{proof} The corollary readily follows the proof of \cite[Corollary 4.3]{GazorYu} in the light of Theorem \ref{prelema1}.
\end{proof}

\section{Different approaches on normal forms of Hopf singularity}\label{compare}

In this section, we briefly present different simplest normal forms (parametric and also nonparametric) in polar coordinates that can be obtained by using or not using time rescaling and reparametrization.

We consider the parametric system given by the Equation (\ref{hopf1}) whose parametric dimension is $N_0.$ With only (near identity change of parametric) state maps without time rescaling and reparametrization, the following parametric normal form can be obtained:
\begin{eqnarray}\label{onlystate}
v^{(\infty)}&=&(1+\sum^\infty_{|\m|=0}\sum_{\mathbf{n}\in \mathbb{N}^m_0}\sum^{N_0}_{l=1}b_{(l+1)l\mathbf{n}_1}\rho^{2l}\mu^{\mathbf{n}})\partial_\theta+ (\sum^\infty_{|\m|=1}\sum_{\mathbf{n}\in \mathbb{N}^m_0} a_{(2N_0+1)2N_0\m}\rho^{4N_0+1}\mu^\m\\\nonumber&&+a_{(N_0+1)N_0}\rho^{2N_0+1}+ \sum^\infty_{|\m|=1}\sum_{\mathbf{n}\in \mathbb{N}^m_0}\sum^{N_0-1}_{i=0}a_{(i+1)i\mathbf{n}_{r}}
\rho^{2i+1}\mu^{\mathbf{n}})\partial_\rho.
\end{eqnarray} This is while the state maps with reparametrization (including an invertible linear reparametrization) can help simplifying Equation (\ref{onlystate}) further to
\begin{eqnarray}\label{onlystatepar}
v^{(\infty)}&=&(1+\sum^\infty_{|\m|=0}\sum_{\mathbf{n}\in \mathbb{N}^m_0}\sum^{N_0}_{l=1}b_{(l+1)l\mathbf{n}}\rho^{2l}\mu^{\mathbf{n}})\partial_\theta+ (\sum^\infty_{r=1}\sum_{\mathbf{n}\in \mathbb{N}^m_0} a_{(2N_0+1)2N_0\m}\rho^{4N_0+1}\mu^\m\\\nonumber&&+a_{(N_0+1)N_0}\rho^{2N_0+1}+ \sum^{N_0-1}_{i=0}
\rho^{2i+1}\mu_i)\partial_\rho.
\end{eqnarray}

The orbital equivalence for parametric normal forms (i.e., the state maps and time rescaling are only used without any usage of reparametrization) follows the equation
\begin{eqnarray}\label{timestate}
v^{(\infty)}\!&\!=\!&\!(a_{(N_0+1)N_0}\rho^{2N_0+1}\!+ \!\!\sum^\infty_{|\m|=1, \mathbf{n}}\!\!\sum^{N_0-1}_{i=0}a_{(i+1)i\mathbf{n}}
\rho^{2i+1}\mu^{\mathbf{n}})\partial_\rho\!+(1+\!\!\sum^\infty_{|\m|=0,\mathbf{n}}\!\!b_{\mathbf{n}}\rho^{2N_0}\mu^{\mathbf{n}})\partial_\theta.
\end{eqnarray} One may compare the equations (\ref{onlystate})-(\ref{timestate}) with the equation (\ref{stab}) to get an idea on how time rescaling and reparametrization may contribute in our parametric normal forms.

Finally, the following corollary presents the nonparametric normal forms for generalized Hopf singularity as well as its two alternative orbital equivalence, see also \cite{AlgabaSur,baiderchurch,pwang}.
\begin{cor}[Nonparametric systems]\label{nopar} Let $v= v^{(0)}$ represent a system of generalized Hopf singularity with no parameter. Then, there exists a natural number $N_0$ such that the system can be
transformed to
\begin{enumerate}
\item \label{nopar1} the simplest normal form in polar coordinates:
\begin{equation}\label{ronoprameter1}
\frac{d{\rho}}{dt}\partial_\rho+\frac{d{\theta}}{dt}\partial_\theta
=(A_{N_0}\rho^{2N_0+1}+A_{2N_0}\rho^{4N_0+1})\partial_\rho+ (1+\sum^{N_0}_{i=n} B_{i}\rho^{2i})\partial_\theta,
\,\qquad A_{N_0}\neq 0,
\end{equation}
where state maps are only used, or to
\item \label{nopar2} (when time rescaling and state maps are both used) the simplest orbital equivalence
\begin{equation}\label{ronoprameter2}
\frac{d{\rho}}{dt}\partial_\rho+\frac{d{\theta}}{dt}\partial_\theta
=(A_{N_0}\rho^{2N_0+1}+A_{2N_0}\rho^{4N_0+1})\partial_\rho+ \partial_\theta, A_{N_0}\neq 0,
\end{equation} or alternatively to
\begin{equation}\label{noprameter2}
\frac{d{\rho}}{dt}\partial_\rho+\frac{d{\theta}}{dt}\partial_\theta
=A_{N_0}\rho^{2N_0+1}\partial_\rho+ (1+ B_{N_0}\rho^{2N_0+1})\partial_\theta, A_{N_0}\neq 0.
\end{equation}
\end{enumerate}
\end{cor}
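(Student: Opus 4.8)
The plan is to run the constructions of Sections~\ref{sec3} and~\ref{alter} in the degenerate situation $m=0$: then the parameter space is trivial, reparametrization drops out, and only the change of state variable $\phi^{S}_{\F^{1}\LST*}$ and the time rescaling $\phi^{T}_{\F^{1}\mathscr{R}*}$ survive. Each of the three forms is obtained by identifying ${\rm Total}^{1}$ of an appropriate (possibly distorted) spectral sequence of the differential $d(Y)=\ad_{Y^{S}}v+Y^{T}v$ (with $Y^{T}$ omitted for part~(\ref{nopar1})) on $A^{0,*}=\F^{1}\LST$, resp.\ $\F^{1}\LST\times\F^{1}\mathscr{R}$, and $A^{1,*}=\LST$, graded by $\delta(X_{jk})=\delta(Y_{jk})=j+k-1$ and $\delta_{\mathscr{R}}(Z_{i})=2i$. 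Since reparametrization is removed, the parametric forms~(\ref{onlystate})--(\ref{timestate}) collapse to the stated nonparametric ones, so the task is to verify the relevant pages.

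For part~(\ref{nopar1}) I would use only $\phi^{S}$. The first page is computed as in the proof of Lemma~\ref{firstlevel} with all $\mu$-dependence deleted: $B^{n,-n+1}_{0}+\F^{n+1}\LST=\ad_{Y_{10}}\LST_{n}+\F^{n+1}\LST$, whose formal-basis-style complement in grade $n=2k$ is the kernel of $\ad_{Y_{10}}$ there, namely $\Span\{X_{(k+1)k},Y_{(k+1)k}\}=\LST_{H,2k}$, since $\ad_{Y_{10}}$ is semisimple and pairs $X_{jk}\leftrightarrow Y_{jk}$ for $j\neq k+1$. Hence $\widetilde v^{(1)}\in Y_{10}+\oplus_{n}\LST_{H,n}$. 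As $v$ is a generalized Hopf singularity whose first-level extended partial normal form has no amplitude term of grade $<2N_{0}$ and nonzero coefficient $A_{N_{0}}:=a^{(1)}_{(N_{0}+1)N_{0}}\neq0$, the converging second differential is $d_{1}=\ad_{Y_{10}+A_{N_{0}}X_{(N_{0}+1)N_{0}}}$. The key computation is that on $\oplus_{n}\LST_{H,n}$ the operator $\ad_{X_{(N_{0}+1)N_{0}}}$ removes every $Y_{(k+1)k}$ and $X_{(k+1)k}$ with $k>N_{0}$ except that its image misses the single line $\f X_{(2N_{0}+1)2N_{0}}$, while it cannot reach any $Y_{(k+1)k}$ with $1\le k\le N_{0}$ (the required generator would have grade $2k-2N_{0}<0$). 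Thus the sequence collapses with $\NC^{(\infty)}=\Span\{X_{(N_{0}+1)N_{0}},X_{(2N_{0}+1)2N_{0}},Y_{21},\dots,Y_{(N_{0}+1)N_{0}}\}$, and transcribing into polar coordinates through the $X$/amplitude, $Y$/phase dictionary of Corollary~\ref{polarhopf} gives~(\ref{ronoprameter1}) with $A_{2N_{0}}:=a^{(\infty)}_{(2N_{0}+1)2N_{0}}$.

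For part~(\ref{nopar2}) I would enlarge the group to $\phi^{S}_{\F^{1}\LST*}\times\phi^{T}_{\F^{1}\mathscr{R}*}$; the propositions of Section~\ref{sec2} and Lemma~\ref{3transf}, with $\PM$ omitted, supply the semidirect-product structure and filtration-wise convergence. For~(\ref{ronoprameter2}) one normalizes with time as in Lemma~\ref{notmodifiedlem1}: since $Z_{k}Y_{10}=Y_{(k+1)k}$, the time term $Z_{k}$ of grade $2k$ hits $\f Y_{(k+1)k}$ directly, so every phase line $\f Y_{(k+1)k}$ ($k\ge1$) enters the image; collapsing leaves $\dot\theta=1$ and the two amplitude lines, i.e.\ $\dot\rho=A_{N_{0}}\rho^{2N_{0}+1}+A_{2N_{0}}\rho^{4N_{0}+1}$. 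For the alternative~(\ref{noprameter2}) I would instead run the distorted spectral sequence of Section~\ref{alter} with the reserved time subspace $\widetilde{\R}^{(i)}_{n}=\Span\{Z_{N_{0}}\mid n=2N_{0}\}$ for $i<2N_{0}$ and $\{0\}$ otherwise (the $m=0$ specialization of~(\ref{Rdecom})), together with the degenerate spaces $\D^{(N)}_{N}$ of~(\ref{sevom}) with the parameter rows deleted; one checks that~(\ref{modcondition}) holds. Not spending $Z_{N_{0}}$ on $Y_{(N_{0}+1)N_{0}}$ at level $2N_{0}$ keeps that phase line but frees $Z_{N_{0}}$ to remove $X_{(2N_{0}+1)2N_{0}}$ at a higher level via $Z_{N_{0}}X_{(N_{0}+1)N_{0}}=X_{(2N_{0}+1)2N_{0}}$, producing~(\ref{noprameter2}).

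The main obstacle is the level-by-level identification of the kernels and images of the converging differentials $\ad_{Y_{10}}$, $\ad_{Y_{10}+A_{N_{0}}X_{(N_{0}+1)N_{0}}}$ and their time-augmented versions on the graded Lie algebra $\LST$, in particular verifying that $X_{(2N_{0}+1)2N_{0}}$ is the only residual amplitude obstruction beyond the leading term and that the sequence collapses at $r=2N_{0}+1$. This is exactly the content of Lemma~\ref{notmodifiedlem1} and Theorem~\ref{specmain} with $m=0$; since deleting every $\mu$ and every $\mu$-row from the proofs of Lemma~\ref{firstlevel}, Lemma~\ref{notmodifiedlem1} and Theorem~\ref{specmain} leaves valid arguments that reduce to the classical bracket computations of \cite{AlgabaSur,baiderchurch,pwang}, I would invoke those results rather than redo the brackets.
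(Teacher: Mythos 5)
Your proposal is correct and is essentially the route the paper intends: the corollary is stated there without proof, resting on the specialization of the machinery of Sections \ref{sec3}--\ref{compare} to the parameter-free case ($m=0$, so only $\phi^{S}$ and $\phi^{T}$ survive) together with the classical bracket computations of Baider--Churchill and the other cited references, which is exactly what you carry out, including the correct trade-off in the distorted sequence between keeping $Y_{(N_0+1)N_0}$ and removing $X_{(2N_0+1)2N_0}$ via the reserved $Z_{N_0}$. The only blemishes are cosmetic --- for $k=N_0$ the generator needed to reach $Y_{(N_0+1)N_0}$ has grade $0$ rather than negative grade (excluded because generators must be near-identity), and the reserved $Z_{N_0}$ would otherwise be spent at the first level, not at level $2N_0$ --- and neither affects the argument.
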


\section{Conclusions}\label{sec6}
The method of spectral sequences has been suitably generalized to consider
the simplest parametric normal forms of parametric vector fields. Our results can also be considered as a generalization for the spectral sequences of orbital equivalence of non-parametric systems. We also introduce a new style (and costyle) for obtaining unique normal forms. The method is applied to obtain two different parametric normal forms associated with generalized Hopf singularity.

\vspace{0.10 in}

\noindent\textbf{Acknowledgments.}
The first author would like to acknowledge Prof. J. Murdock's generous help and contributions via numerous long e-mail discussions. 


\end{document}